\documentclass[11pt]{article}

\usepackage[a4paper,margin=1in]{geometry}
\usepackage[T1]{fontenc}
\usepackage{microtype}
\usepackage{amsmath,amssymb,mathtools,amsthm}
\usepackage{graphicx}
\usepackage{booktabs}
\providecommand{\botrule}{\bottomrule}
\usepackage{float}
\usepackage{tikz}
\usetikzlibrary{plotmarks,positioning}
\usepackage{subfig}
\usepackage{aliascnt}
\usepackage[authoryear,round]{natbib}
\usepackage{xcolor}
\usepackage[colorlinks=true,allcolors=blue]{hyperref}
\usepackage[capitalise,noabbrev]{cleveref}
\providecommand{\doi}[1]{}
\renewcommand{\doi}[1]{\href{https://doi.org/#1}{doi:\,\nolinkurl{#1}}}

\theoremstyle{plain}
\newtheorem{theorem}{Theorem}
\newaliascnt{proposition}{theorem}
\newtheorem{proposition}[proposition]{Proposition}
\aliascntresetthe{proposition}
\newaliascnt{lemma}{theorem}
\newtheorem{lemma}[lemma]{Lemma}
\aliascntresetthe{lemma}
\newaliascnt{corollary}{theorem}
\newtheorem{corollary}[corollary]{Corollary}
\aliascntresetthe{corollary}
\newaliascnt{definition}{theorem}
\newtheorem{definition}[definition]{Definition}
\aliascntresetthe{definition}
\newaliascnt{assumption}{theorem}
\newtheorem{assumption}[assumption]{Assumption}
\aliascntresetthe{assumption}
\theoremstyle{remark}
\newaliascnt{remark}{theorem}
\newtheorem{remark}[remark]{Remark}
\aliascntresetthe{remark}
\numberwithin{equation}{section}
\crefname{theorem}{Theorem}{Theorems}
\crefname{proposition}{Proposition}{Propositions}
\crefname{lemma}{Lemma}{Lemmas}
\crefname{corollary}{Corollary}{Corollaries}
\crefname{definition}{Definition}{Definitions}
\crefname{assumption}{Assumption}{Assumptions}
\crefname{remark}{Remark}{Remarks}

\begin{document}

\title{A penalty-free bulk--surface CutFEM stabilized by lattice Green's function extensions}
\author{Qing Xia\\[0.4em]
\small College of Science, Mathematics and Technology, Wenzhou-Kean University\\
\small Wenzhou, Zhejiang 325060, China\\
\small \href{mailto:qxia@kean.edu}{\texttt{qxia@kean.edu}}}
\date{}
\maketitle

\begin{abstract}
We introduce a penalty-free cut finite element method for surface elliptic problems coupled to a harmonic bulk field on a Cartesian grid. Instead of adding a stabilization term, the method restricts the active finite element space by a discrete bulk harmonic extension represented with the lattice Green's function, together with a local extrapolation near the interface. The resulting method is a symmetric Galerkin scheme posed on a reduced space embedded in the standard active-mesh finite element space. Under stated geometric, regularity, and approximation assumptions, we establish optimal $\mathcal{O}(h)$ and $\mathcal{O}(h^2)$ surface convergence rates, as well as robust, cut-independent algebraic conditioning. Furthermore, a density formulation based on lattice layer potentials is shown to act as an operator preconditioner; the single-layer parametrization yields an $\mathcal{O}(1)$ algebraically well-conditioned system without introducing any tunable parameters. Two-dimensional experiments on circular, deformed, and smooth nonconvex interfaces confirm the predicted convergence and robustness under changes in the cut position. Finally, a three-dimensional torus experiment demonstrates the identical construction using a seven-point bulk stencil and trilinear surface traces, exhibiting the expected optimal error rates.
\end{abstract}

\noindent\textbf{Keywords:} CutFEM; Laplace--Beltrami operator; bulk--surface coupling; lattice Green's function; unfitted discretization.

\medskip
\noindent\textbf{2020 Mathematics Subject Classification:} 65N30, 65N12, 65F08, 35J05.

\section{Introduction}\label{sec:intro}

Partial differential equations posed on surfaces, or coupling bulk and surface phenomena, arise in applications ranging from surface diffusion on biological membranes and cell signalling~\citep{dziuk2013finite,elliott2012modelling} to thin-film flow~\citep{roy2002lubrication} and coupled boundary-layer problems in fluid mechanics~\citep{elliott2013finite}. For a smooth closed curve $\Gamma\subset\mathbb{R}^2$, the canonical surface problem is the Laplace--Beltrami equation $-\Delta_\Gamma u=g$ on $\Gamma$. In many applications it is coupled to a bulk equation in a domain $\Omega$ with $\partial\Omega=\Gamma$; in the simplest setting, the surface unknown is the trace of a bulk-harmonic field.

Unfitted discretizations such as CutFEM~\citep{burman2015cutfem} and TraceFEM~\citep{olshanskii2018trace,olshanskii2009finite} are attractive for evolving or implicitly defined geometries, but they suffer from a \emph{small-cut instability}: basis functions with vanishingly small support on $\Gamma$ are poorly controlled by the surface bilinear form, and the condition number of the naive Galerkin assembly can grow rapidly~\citep{de2023stability,de2017condition,olshanskii2018trace}. Stabilization strategies include ghost penalties~\citep{burman2010ghost,burman2015stabilized,burman2016cut}, cell aggregation~\citep{badia2018aggregated,chen2023arbitrarily,hansbo2016cut}, discrete polynomial extension~\citep{burman2022cutfem}, and shifted-boundary methods~\citep{main2018shifted}. Most introduce a parameter, modify the bilinear form, or alter the local discrete space near the interface.

The idea of this paper is to use the bulk equation itself as the stabilizing mechanism. In a coupled problem with a harmonic bulk field, the active-grid degrees of freedom cannot vary independently. We determine the interior active layer by a discrete harmonic extension from an exterior stencil-adjacent layer and determine the remaining exterior active vertices by local extrapolation. On a Cartesian grid, the harmonic extension is represented by lattice Green's function (LGF) layer potentials~\citep{liska2014parallel,liska2016fast,martinsson2009boundary,xia2026geometrically}. The resulting functions form a reduced subspace of the standard active finite element space. Restricting both trial and test functions to this subspace gives a symmetric Galerkin system without a ghost penalty, normal-gradient penalty, aggregation rule, or nonsymmetric projection.

We formulate the coupled bulk--surface problem on this LGF-constrained space and establish optimal surface error estimates under explicit regularity and approximation assumptions. For the direct reduced operator, the conditioning is independent of the cut position and has the usual mesh-size scaling of a second-order surface problem in the flat periodic setting; the curved-interface counterpart is supported by direct numerical measurements. We also introduce single- and double-layer density parametrizations and explain their different spectral behaviour through a flat-interface symbol calculation. The algebraic construction has two complementary roles: the harmonic constraint removes weakly observed small-cut modes, while the single-layer density coordinates reduce the effective order of the system. Numerical experiments confirm the predicted convergence and robustness under translations of circular, deformed, and smooth nonconvex interfaces.

The closest conceptual predecessor is the discrete-extension framework of \citet{burman2022cutfem}: both methods define a reduced trial space inside the active finite element space. The difference is the extension mechanism. In that framework, data are continued polynomially from stable interior elements; here they are continued by the discrete harmonic extension associated with the bulk operator. The construction is also related to difference-potential methods~\citep{ryaben2012method,epshteyn2020difference} and to fast lattice solvers based on discrete potential theory~\citep{gillman2014fast}.

The analysis is developed for smooth closed curves on a Cartesian grid with the five-point Laplacian as the model bulk operator; smooth nonconvex domains are included. Interfaces with corners are not covered by the present regularity theory. The same framework applies to screened Poisson and suitable constant-coefficient anisotropic scalar stencils. The numerical results also include a three-dimensional torus computed with a seven-point bulk stencil and trilinear surface traces. That experiment exhibits second-order bulk and surface $L^2$ convergence and first-order surface $H^1$ convergence; the reduced condition numbers and iteration counts remain of order $10^2$ over the three tested levels.

The remainder of the paper is organized as follows. \Cref{sec:problem} states the coupled problem and fixes notation. \Cref{sec:cutfem,sec:lgf-extension,sec:reduced} introduce the CutFEM discretization, the LGF harmonic extension, and the reduced Galerkin formulations. \Cref{sec:analysis} develops the approximation and surface error analysis. \Cref{sec:conditioning-section} establishes trace stability, bulk reconstruction, and conditioning, and discusses extensions to other bulk operators. \Cref{sec:numerics} presents the two- and three-dimensional numerical experiments, and \Cref{sec:conclusion} concludes.

\section{Problem setting}\label{sec:problem}

Let $\Omega\subset\mathbb{R}^2$ be a bounded domain with smooth boundary $\Gamma=\partial\Omega$ and outward unit normal $\mathbf n$. The domain $\Omega$ may be nonconvex; the only geometric requirement is that $\Gamma$ is a smooth closed curve. We consider the coupled bulk--surface problem
\begin{subequations}\label{eq:coupled}
\begin{align}
    -\Delta u &= 0 \quad\text{in } \Omega, \label{eq:coupled-bulk}\\
    -\Delta_{\Gamma} u &= g \quad\text{on } \Gamma, \label{eq:coupled-surf}
\end{align}
\end{subequations}
with surface data $g$ satisfying the compatibility condition $\int_\Gamma g\,\mathrm{d}s=0$ and gauge normalization $\int_\Gamma u\,\mathrm{d}s=0$. The surface unknown is the trace of the bulk unknown: \eqref{eq:coupled-bulk} determines $u$ in $\Omega$ as the harmonic extension of its boundary values, and \eqref{eq:coupled-surf} determines those boundary values from the surface data $g$.

\begin{remark}[Nature of the coupling]\label{rem:strong-bulk}
At the continuous level, \eqref{eq:coupled-surf} determines $u|_\Gamma$ and \eqref{eq:coupled-bulk} supplies its harmonic extension. Discretely, bulk harmonicity is imposed strongly by eliminating the interior active degrees of freedom, while only the surface equation is posed variationally on the reduced trace space. Thus ``bulk--surface'' denotes a strong trace constraint rather than a two-way exchange law.
\end{remark}

Define the mean-zero subspace
\begin{equation}\label{eq:H1diamond}
    H^1_\diamond(\Gamma) := \Bigl\{ v \in H^1(\Gamma) :
    \int_\Gamma v \, \mathrm{d}s = 0 \Bigr\}.
\end{equation}
The variational form of \eqref{eq:coupled-surf} is: find $u\in H^1_\diamond(\Gamma)$ such that 
\begin{equation}\label{eq:weak-surf} 
    a_\Gamma(u,v) = (g,v)_\Gamma,\qquad \forall v\in H^1_\diamond(\Gamma),
\end{equation}
where $a_\Gamma(u,v):=\int_\Gamma\nabla_\Gamma u\cdot\nabla_\Gamma v\,\mathrm{d}s$ and $(g,v)_\Gamma:=\int_\Gamma gv\,\mathrm{d}s$. The surface Poincar\'e inequality on the compact connected curve $\Gamma$ gives $\|v\|_{L^2(\Gamma)}\lesssim |v|_{H^1(\Gamma)}$ for $v\in H^1_\diamond(\Gamma)$, so $a_\Gamma$ is coercive on $H^1_\diamond(\Gamma)$ and \eqref{eq:weak-surf} is well-posed. The coupling states that $u$ be harmonic in $\Omega$ with boundary values given by the solution of \eqref{eq:weak-surf}.

\section{Cut finite element discretization on \texorpdfstring{$\Gamma$}{Gamma}}\label{sec:cutfem}
Let $\mathcal{T}_h$ be a uniform Cartesian mesh of $\mathbb{R}^2$ with mesh size $h$, and let $\psi$ be the level-set function with $\Gamma=\{\psi=0\}$. A cell $K\in\mathcal{T}_h$ is \emph{active} if $\Gamma\cap K\neq\varnothing$; let $\mathcal{T}_h^\Gamma$ denote the active mesh, let $\gamma$ be its vertex set, and set
\[
   \Omega_h^\Gamma:=\bigcup_{K\in\mathcal T_h^\Gamma}K.
\]
Following the standard CutFEM convention, $V_h\subset H^1(\Omega_h^\Gamma)$ denotes the conforming bilinear ($\mathcal{Q}_1$) finite element space on the active mesh (equivalently, the span of those global $\mathcal{Q}_1$ basis functions whose support intersects $\Gamma$). The curve on each active cell $K$ is approximated by a straight line segment $\Gamma_K^h$ connecting the level-set zero-crossings of $\psi$ on $\partial K$, with $\Gamma^h=\bigcup_K\Gamma_K^h$. The surface restriction and the corresponding TraceFEM space are
\begin{equation}\label{eq:ambient-trace-space}
   \operatorname{Tr}_h:V_h\to H^1(\Gamma^h),\qquad
   \operatorname{Tr}_h v_h:=v_h|_{\Gamma^h},\qquad
   W_h^{\rm tr}:=\operatorname{Tr}_hV_h\subset H^1(\Gamma^h).
\end{equation}
Thus $V_h$ is the ambient active-band space, whereas $W_h^{\rm tr}$ is the actual surface trial space. The map $\operatorname{Tr}_h$ is not injective on $V_h$; this distinction separates functional well-posedness on the surface from algebraic stability of active-grid coefficients. Let $\mathcal N(x)$ denote the set of four lattice neighbours of a vertex $x$ in the five-point stencil. We partition the active vertices into three regions $\gamma=\gamma_1\,\dot\cup\,\gamma_2\,\dot\cup\,\gamma_3$, where
\begin{equation}\label{eq:g2-g3-def}
\begin{aligned}
    \gamma_1 &:= \{x\in\gamma : \psi(x)<0\},\\
    \gamma_2 &:= \{x\in\gamma : \psi(x)>0,\; \mathcal N(x)\cap\gamma_1\neq\varnothing\},\\
    \gamma_3 &:= \{x\in\gamma : \psi(x)>0\}\setminus\gamma_2,
\end{aligned}
\end{equation}
Thus $\gamma_1$ comprises the interior active vertices, $\gamma_2$ the exterior active vertices within one bulk-stencil reach of $\gamma_1$, and $\gamma_3$ the remaining exterior active vertices; see \Cref{fig:cutcell}.

\begin{assumption}[Active-grid geometry]\label{ass:geom}
The ambient mesh $\mathcal{T}_h$ is uniform Cartesian, and $V_h$ is the conforming bilinear $\mathcal{Q}_1$ space on the active mesh $\mathcal{T}_h^\Gamma$. For $h$ sufficiently small, $\Gamma^h$ is obtained by connecting the zero crossings of $\psi$ on mesh edges. We assume the standard nondegenerate cut convention: no lattice vertex lies on $\Gamma$ (a fixed sign tie-break is used in computation), all edge intersections are transverse, each active cell is cut by at most one connected interface segment, and every active cell has vertices of both signs. Let
\[
   \mathcal V_h^{\rm int}:=\{\xi\in h\mathbb Z^2:\psi(\xi)<0\},
   \qquad
   \overline{\mathcal V}_h:=\mathcal V_h^{\rm int}\cup\gamma_2.
\]
We assume that $\mathcal V_h^{\rm int}$ is finite and connected; consequently, its one-layer closure $\overline{\mathcal V}_h$ is connected. These are combinatorial active-set assumptions; no lower bound on the cut ratio is imposed. The identity between the exterior lattice boundary of $\mathcal V_h^{\rm int}$ and $\gamma_2$ is established in \Cref{lem:dmp} from the cut convention above.
\end{assumption}

For an interface-like vertex set $\gamma'$ (such as $\gamma_1$, $\gamma_2$, or $\gamma_3$), define the scaled norm
\begin{equation}\label{eq:scaled-interface-norm}
   \|v\|_{\ell^2_h(\gamma')}^2:=h\sum_{\xi\in\gamma'}|v_\xi|^2.
\end{equation}

\begin{figure}[H]
\centering
\includegraphics[width=0.6\textwidth]{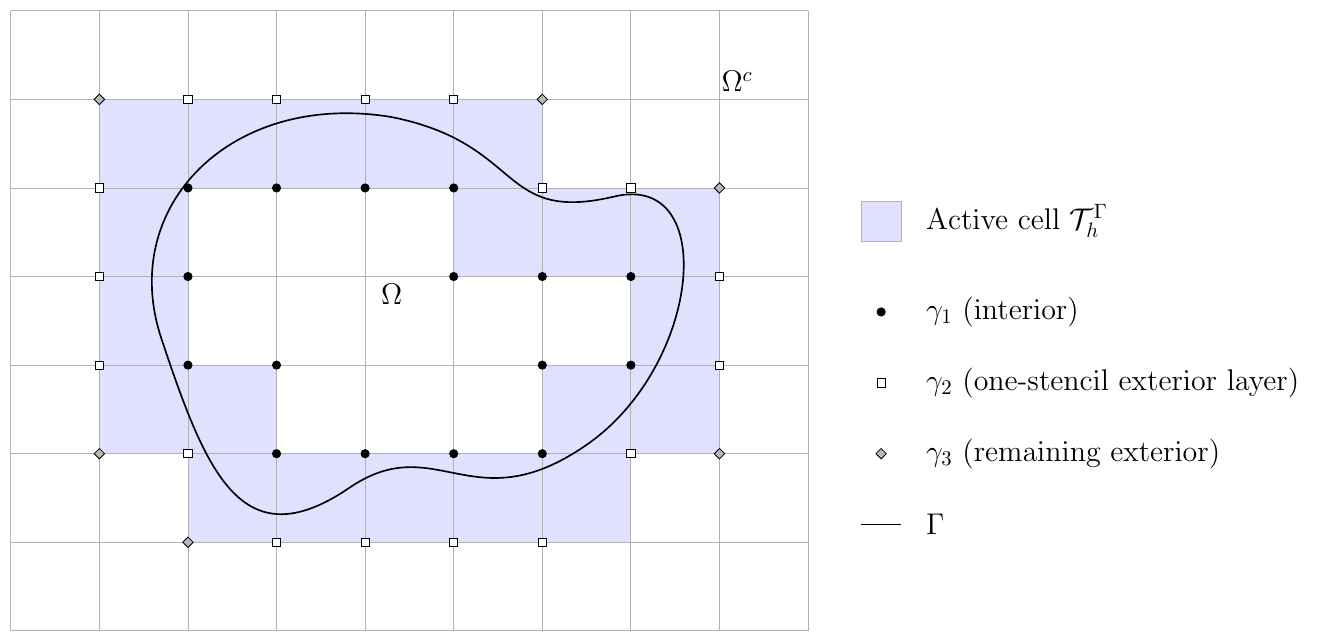}
\caption{Cut cells and point sets $\gamma_1,\gamma_2,\gamma_3$ near the interface $\Gamma$ separating the interior $\Omega$ from the exterior. $\gamma_1$ collects the interior active vertices, $\gamma_2$ the exterior active vertices within one bulk-stencil reach of $\gamma_1$, and $\gamma_3$ the remaining exterior active vertices.}\label{fig:cutcell}
\end{figure}

Let $\mathbf n_h(x)$ denote the unit outward normal to the piecewise-linear curve $\Gamma^h$, which is constant on each segment $\Gamma_K^h$. The tangential projection on each segment is $\mathcal P_h(x)=\mathbf I-\mathbf n_h(x)\mathbf n_h(x)^T$. The discrete bilinear form is assembled element-wise: for $u_h=\sum_iu_i\phi_i$ and $v_h=\sum_jv_j\phi_j$, the local stiffness and load are
\begin{equation}\label{eq:local-stiff}
    K^K_{ij} = \int_{\Gamma_K^h}(\mathcal P_h\nabla\phi_i)\cdot(\mathcal P_h\nabla\phi_j)\,\mathrm{d}s, \qquad
    b^K_i = \int_{\Gamma_K^h}g^e\,\phi_i\,\mathrm{d}s,
\end{equation}
where $g^e=g\circ p$ and $p:\Gamma^h\to\Gamma$ is the closest-point projection.
Assembling over $\mathcal{T}_h^\Gamma$ and ordering the vertices as $(\gamma_1,\gamma_2,\gamma_3)$ yields the assembled stiffness relation
\begin{equation}\label{eq:full-block}
    K \begin{pmatrix} u_1 \\ u_2 \\ u_3 \end{pmatrix} = b,
\end{equation}
with $K=K^T$. For a continuous function $U$ we write $U^h_\gamma$ for its nodal samples on a vertex set $\gamma$; the same notation applies to finite element coefficient blocks $(u_1,u_2,u_3)$.

\begin{remark}[Absence of added penalty stabilization]
The assembly \eqref{eq:local-stiff} is the plain Galerkin CutFEM discretization, using the intrinsic tangential gradient on the piecewise-linear curve $\Gamma^h$. The matrix $K$ is singular: its kernel contains all active-grid coefficient vectors whose finite element traces are constant on each component of $\Gamma^h$, and this kernel has dimension proportional to the number of active vertices. The cut-dependent pathology therefore concerns the nonzero spectrum, whose smallest positive eigenvalue can collapse as the cut degenerates. The contribution of this paper is to show that coupling \eqref{eq:full-block} with a discrete bulk harmonic extension eliminates this pathology without modifying \eqref{eq:local-stiff}.
\end{remark}

\section{Lattice Green's function and discrete harmonic extension}\label{sec:lgf-extension}

Let $L_h$ denote the standard five-point discrete Laplacian on the Cartesian lattice $h\mathbb{Z}^2$. The \emph{lattice Green's function} $G_h$ is the fundamental solution satisfying $-L_h G_h=\delta_0$ on $h\mathbb{Z}^2$ with the scaled point source $\delta_0(0)=h^{-2}$, computed to machine precision using the procedure of \citet{xia2026geometrically}. In two dimensions $G_h$ is determined only up to an additive constant. We fix the canonical Martinsson--Rodin normalization $G_h(0)=0$; the lower singular-value bound used below is stated for this normalization. The invariance of the reduced operators under any alternative normalization for which the trace block remains invertible is recorded after \Cref{prop:equivalence}.

\subsection{Single- and double-layer potentials}
Following \citet{martinsson2009boundary} and \citet{xia2026geometrically}, we construct discrete single- and double-layer potential matrices indexed by source and target points in $\gamma$:
\begin{equation}\label{eq:SD-def}
    S_{ij} = G_h(x_i-x_j),\quad
    D_{ij} = \sum_{\substack{k\sim j,\,x_k\notin\gamma}}
    \bigl[G_h(x_i-x_k)-G_h(x_i-x_j)\bigr],
\end{equation}
where $k\sim j$ denotes lattice neighbours of $x_j$. The sum in $D_{ij}$ runs over neighbours of $x_j$ lying outside the active vertex set $\gamma$ and plays the role of a one-sided discrete (outward) normal derivative of $G_h$ in the source variable. Restricting source indices to $\gamma_2$ and target indices to either $\gamma_1$ or $\gamma_2$, we obtain the blocks $S_{22},S_{12},D_{22},D_{12}$. We henceforth abbreviate
\begin{equation*}
    P_2 := S_{22}\text{ or }D_{22},\qquad P_1 := S_{12}\text{ or }D_{12},
\end{equation*}
with the single- or double-layer choice held fixed throughout each construction.

\subsection{Discrete harmonic extension operator}
\begin{definition}[Discrete harmonic extension]\label{def:H}
When the block $P_2$ is invertible, 
the discrete harmonic extension operator $H$ from $\gamma_2$ to $\gamma_1$ is 
\begin{equation}\label{eq:H-def}
    H := P_1P_2^{-1} : \mathbb{R}^{|\gamma_2|}\to\mathbb{R}^{|\gamma_1|}.
\end{equation}
\end{definition}

A lattice function discrete-harmonic in a neighbourhood of $\gamma$ admits a layer-potential representation with density on $\gamma_2$: solving $P_2\mu=u^h_{\gamma_2}$ recovers the density and $P_1\mu$ gives the interior trace; composing yields $P_1P_2^{-1}$. Both single- and double-layer choices give a valid $H$ with second-order consistency.

\begin{remark}[Invertibility of $P_2$]\label{rem:P2-invertible}
For the single-layer formulation in two dimensions, Theorem~6.1 of \citet{martinsson2009boundary} proves, for the canonical normalization $G_h(0)=0$, that the single-layer block is uniformly invertible on the node boundary of a finite connected lattice domain, with
\[
    \sigma_{\min}(P_2)\ge\tfrac18,
\]
which is independent of $h$. For the double-layer construction we separately assume uniform invertibility of $D_{22}$; no general curved-cut theorem is claimed. 
\end{remark}

\subsection{Discrete Dirichlet interpretation and stability}

\begin{lemma}[Lattice boundary and discrete maximum principle]\label{lem:dmp}
Let
\[
\partial_h\mathcal V_h^{\rm int}:=
\{\eta\notin\mathcal V_h^{\rm int}:\eta\sim\xi
\text{ for some }\xi\in\mathcal V_h^{\rm int}\}.
\]
Then $\partial_h\mathcal V_h^{\rm int}=\gamma_2$. If $w$ is discrete harmonic on $\mathcal V_h^{\rm int}$, then
\begin{equation}\label{eq:dmp}
    \|w\|_{\ell^\infty(\mathcal V_h^{\rm int})}\le\|w\|_{\ell^\infty(\gamma_2)}.
\end{equation}
In particular, the discrete Dirichlet problem on $\mathcal V_h^{\rm int}$ with data on $\gamma_2$ is unique.
\end{lemma}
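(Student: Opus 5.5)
The plan has two parts: first the combinatorial identity $\partial_h\mathcal V_h^{\rm int}=\gamma_2$, using only the cut convention of \Cref{ass:geom}; then the classical discrete maximum principle for the five-point stencil on the finite set $\mathcal V_h^{\rm int}$, with lattice boundary $\gamma_2$.

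\emph{Set identity.} I first note that $\gamma_1=\gamma\cap\mathcal V_h^{\rm int}\subset\mathcal V_h^{\rm int}$.

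For ``$\supseteq$'', let $\eta\in\gamma_2$. Then $\psi(\eta)>0$, so $\eta\notin\mathcal V_h^{\rm int}$. By \eqref{eq:g2-g3-def}, $\eta$ has a lattice neighbour in $\gamma_1\subset\mathcal V_h^{\rm int}$, so $\eta\in\partial_h\mathcal V_h^{\rm int}$.

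For ``$\subseteq$'', let $\eta\in\partial_h\mathcal V_h^{\rm int}$, adjacent to some $\xi\in\mathcal V_h^{\rm int}$. Because no lattice vertex lies on $\Gamma$, $\eta\notin\mathcal V_h^{\rm int}$ forces $\psi(\eta)>0$, while $\psi(\xi)<0$. By continuity of $\psi$ along the mesh edge $[\xi,\eta]$, this edge contains a zero of $\psi$, that is, a point of $\Gamma$. Every cell $K$ containing the edge $[\xi,\eta]$ therefore meets $\Gamma$ and is active. Hence both endpoints are active vertices: $\xi\in\gamma$ with $\psi(\xi)<0$, so $\xi\in\gamma_1$. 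Likewise $\eta\in\gamma$ with $\psi(\eta)>0$ and $\mathcal N(\eta)\cap\gamma_1\ni\xi$, so $\eta\in\gamma_2$. The only delicate point, and the main place where care is needed, is that a sign change along an edge really produces an active cell. This relies on the no-vertex-on-$\Gamma$ convention and on the definition of activity as $\Gamma\cap K\ne\varnothing$; transversality is not even needed here.

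\emph{Maximum principle.} Let $w$ be defined on $\overline{\mathcal V}_h=\mathcal V_h^{\rm int}\cup\gamma_2$ with $-L_hw=0$ at every $\xi\in\mathcal V_h^{\rm int}$. Then $w(\xi)$ equals the average of its four neighbours, all of which lie in $\overline{\mathcal V}_h$ by the identity just proved.

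Let $M=\max_{\overline{\mathcal V}_h}w$, which exists since the set is finite. Suppose $w(\xi_0)=M$ for some $\xi_0\in\mathcal V_h^{\rm int}$. Because $w(\xi_0)$ is an average of neighbour values none exceeding $M$, all four neighbours also take the value $M$. Iterate this along the ray $\xi_0+kh e_1$, $k=0,1,2,\dots$. Since $\mathcal V_h^{\rm int}$ is finite, there is a first index $k^*$ with $\xi_0+k^*he_1\notin\mathcal V_h^{\rm int}$. That point is a neighbour of an interior point, so it lies in $\partial_h\mathcal V_h^{\rm int}=\gamma_2$, and it carries the value $M$. Hence in every case
\[
\max_{\overline{\mathcal V}_h}w=\max_{\gamma_2}w .
\]
Applying the same argument to $-w$ gives the corresponding statement for the minimum, and together these yield \eqref{eq:dmp}.

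\emph{Uniqueness.} If two solutions share the same data on $\gamma_2$, their difference is discrete harmonic on $\mathcal V_h^{\rm int}$ and vanishes on $\gamma_2$. By \eqref{eq:dmp} it vanishes identically on $\mathcal V_h^{\rm int}$.

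The ray argument makes the connectedness of $\mathcal V_h^{\rm int}$ unnecessary for this lemma; it is needed only for other purposes, such as \Cref{rem:P2-invertible}.
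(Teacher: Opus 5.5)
Your proof is correct and follows the same route as the paper. The sign change of $\psi$ along the edge $\xi\eta$ makes the incident cells active, which gives $\partial_h\mathcal V_h^{\rm int}=\gamma_2$; the averaging property at an interior maximum is then propagated until the maximum is reached on $\gamma_2$, after which applying the argument to $-w$ and to the difference of two solutions completes the proof. Your ray $\xi_0+khe_1$ is an explicit instance of the paper's ``propagation along edges,'' and you are right that this step uses only the finiteness of $\mathcal V_h^{\rm int}$, not its connectedness.
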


\begin{proof}
If $\eta\in\gamma_2$, it is exterior and adjacent to a vertex of $\gamma_1\subset\mathcal V_h^{\rm int}$, hence $\eta\in\partial_h\mathcal V_h^{\rm int}$. Conversely, let $\eta\in\partial_h\mathcal V_h^{\rm int}$ and choose an interior neighbour $\xi$. The edge $\xi\eta$ changes sign; the cells incident on that edge are therefore active. Thus $\xi\in\gamma_1$, $\eta\in\gamma$, and \eqref{eq:g2-g3-def} gives $\eta\in\gamma_2$. This proves equality. If the maximum is attained at an interior vertex $\xi$, $L_hw(\xi)=0$ gives $w(\xi)=\tfrac14\sum_{\eta\sim\xi}w(\eta)$; equality forces all neighbours to share the maximum. Propagation along edges shows the maximum is attained on $\partial_h\mathcal V_h^{\rm int}=\gamma_2$. Applying the same argument to $-w$ gives \eqref{eq:dmp}. Applying the bound to the difference of two solutions with identical boundary data proves uniqueness.
\end{proof}

\begin{lemma}[Dirichlet interpretation of the layer-potential extension]\label{lem:H-Dirichlet}
For either the single-layer choice, or the double-layer choice under \Cref{rem:P2-invertible}, $Hu_2=P_1P_2^{-1}u_2$ is the restriction to $\gamma_1$ of the unique discrete harmonic function on $\mathcal V_h^{\rm int}$ with Dirichlet data $u_2$ on $\gamma_2$. Consequently,
\[
 H\mathbf1_{\gamma_2}=\mathbf1_{\gamma_1},
 \qquad
 \|Hu_2\|_{\ell^\infty(\gamma_1)}
 \le \|u_2\|_{\ell^\infty(\gamma_2)}.
\]
\end{lemma}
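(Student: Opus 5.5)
The plan is to show that the lattice function built from the layer potential is discrete harmonic on $\mathcal V_h^{\rm int}$ and has the right values on $\gamma_2$. The uniqueness part of \Cref{lem:dmp} then identifies its restriction to $\gamma_1$ with $Hu_2$.

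Fix $u_2$ and set $\mu:=P_2^{-1}u_2$. For the single layer, invertibility is given by \Cref{rem:P2-invertible}; for the double layer it is assumed. Define the lattice function
\[
w(\xi):=\sum_{j\in\gamma_2}G_h(\xi-x_j)\mu_j
\]
in the single-layer case. In the double-layer case, define $w$ by the corresponding sum with kernel $\sum_{k\sim j,\,x_k\notin\gamma}[G_h(\xi-x_k)-G_h(\xi-x_j)]$.

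\emph{Step 1 (harmonicity).} Since $-L_hG_h=\delta_0$, applying $L_h$ in $\xi$ gives
\[
L_hw(\xi)=-h^{-2}\sum_j\mu_j\,\mathbf 1[\xi=x_j]
\]
for the single layer. For the double layer, $L_hw(\xi)$ is a combination of deltas located at $x_j\in\gamma_2$ and at neighbours $x_k\notin\gamma$. These source points must lie outside $\mathcal V_h^{\rm int}$. For $x_j\in\gamma_2$ this holds because $\psi(x_j)>0$. For $x_k$, any interior lattice neighbour $x_k$ of $x_j\in\gamma_2$ would make the edge $x_jx_k$ sign-changing, so both incident cells would be active and $x_k\in\gamma$. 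Hence $x_k\notin\gamma$ forces $\psi(x_k)>0$, and $x_k$ is exterior. Therefore $L_hw=0$ at every vertex of $\mathcal V_h^{\rm int}$.

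\emph{Step 2 (boundary data).} By construction, $w|_{\gamma_2}=P_2\mu=u_2$. By \Cref{lem:dmp}, $\partial_h\mathcal V_h^{\rm int}=\gamma_2$, so $w$ solves the discrete Dirichlet problem on $\mathcal V_h^{\rm int}$ with data $u_2$. Uniqueness in \Cref{lem:dmp} makes it the unique such solution. Its restriction to $\gamma_1\subset\mathcal V_h^{\rm int}$ is $P_1\mu=Hu_2$. The Dirichlet problem is also solvable for arbitrary data, since the domain is finite and uniqueness gives injectivity of a square system, so the statement is meaningful for every $u_2$.

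\emph{Step 3 (consequences).} For $u_2=\mathbf 1_{\gamma_2}$, the constant function $1$ is discrete harmonic with these boundary values. By uniqueness it coincides with the extension, giving $H\mathbf 1_{\gamma_2}=\mathbf 1_{\gamma_1}$. Notably, this does not depend on the normalization of $G_h$. The $\ell^\infty$ bound follows directly from \eqref{eq:dmp} restricted to $\gamma_1$.

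The main obstacle is Step 1 for the double layer. There one must check carefully that every source point $x_k$ appearing in $D$ lies outside $\mathcal V_h^{\rm int}$, which relies on the cut convention of \Cref{ass:geom}. The single-layer case is immediate.
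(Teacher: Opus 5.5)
Your proposal is correct and follows the paper's proof essentially step for step. You build the layer potential from $\mu=P_2^{-1}u_2$, check harmonicity on $\mathcal V_h^{\rm int}$ (using the same sign-changing-edge argument to place the extra double-layer sources outside the interior), match the data on $\gamma_2$, and then invoke the uniqueness and maximum principle of \Cref{lem:dmp} to get the identification, constant reproduction, and the $\ell^\infty$ bound. Your remark on solvability of the finite Dirichlet system is correct but not needed, since the potential itself already provides a solution for every $u_2$.
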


\begin{proof}
Put $\mu=P_2^{-1}u_2$ and let $z=P\mu$ be the corresponding full lattice potential. For the single layer, $-L_hz$ is supported on $\gamma_2$, so $z$ is discrete harmonic on $\mathcal V_h^{\rm int}$. For the double layer, applying $-L_h$ to each difference in \eqref{eq:SD-def} produces point sources on $\gamma_2$ and on its neighbours outside $\gamma$. Every interior neighbour of a point in $\gamma_2$ lies in $\gamma_1$ by the sign-changing-edge argument above, and hence is not outside $\gamma$; the additional sources are therefore exterior to $\mathcal V_h^{\rm int}$. Thus the double-layer potential is also harmonic on $\mathcal V_h^{\rm int}$. In either case $z|_{\gamma_2}=P_2\mu=u_2$. Uniqueness in \Cref{lem:dmp} identifies $z$ with the discrete Dirichlet solution, and restriction to $\gamma_1$ gives $z|_{\gamma_1}=P_1\mu=Hu_2$. Taking constant boundary data and using uniqueness gives constant reproduction, while \eqref{eq:dmp} gives the $\ell^\infty$ bound.
\end{proof}

\begin{remark}[Companion finite element harmonic lift]\label{rem:companion-P1}
The five-point harmonic extension has an exact finite element interpretation. Subdivide each Cartesian square into two right triangles and let $X_h$ be the continuous piecewise-affine space on this companion triangulation. For an interior lattice vertex, the $X_h$ stiffness equation is, up to a positive scaling, the five-point relation: the cotangent weight of the square diagonal vanishes and the four axial weights remain. Consequently, the lattice Dirichlet solution in \Cref{lem:H-Dirichlet} is the nodal vector of the discrete harmonic lift in $X_h$. The surface method still uses the bilinear space $V_h$: the same nodal vector is transferred to its $\mathcal Q_1$ representative before restriction to $\Gamma^h$. Thus $X_h$ and $V_h$ provide companion finite element realizations of the same constrained nodal data, not the same cellwise function. This viewpoint allows energy-minimization and finite element lifting arguments to be used independently of the layer-potential representation.
\end{remark}

\begin{lemma}[$\ell^2_h$-stability of $H$]\label{lem:H-stab}
Under \Cref{ass:geom}, assume that $P_2$ is invertible for the chosen layer-potential construction, so that $H=P_1P_2^{-1}$ is well defined. Then there exists $C_H>0$ independent of $h$ and of the position of $\Gamma$ relative to $\mathcal{T}_h$ such that, for every $u_2\in\mathbb{R}^{|\gamma_2|}$,
\begin{equation}\label{eq:H-stab}
    \|Hu_2\|_{\ell^2_h(\gamma_1)}\le C_H\|u_2\|_{\ell^2_h(\gamma_2)}.
\end{equation}
\end{lemma}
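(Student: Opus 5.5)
By \Cref{lem:H-Dirichlet}, $Hu_2$ is the restriction to $\gamma_1$ of the discrete Dirichlet solution $w$ on $\mathcal V_h^{\rm int}$ with data $u_2$ on $\gamma_2$. The plan is therefore to bound a discrete harmonic function near the boundary by its boundary data in the scaled $\ell^2_h$ norm. The layer-potential representation, and hence $P_2^{-1}$, plays no further role, so the constant will be independent of the layer choice and of $\sigma_{\min}(P_2)$.

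The proof rests on a \emph{counting estimate}. Under \Cref{ass:geom} with $\Gamma$ smooth, the active cells form a band of width $O(h)$ around $\Gamma$. Every vertex of $\gamma_1$ or $\gamma_2$ lies within distance $\sqrt2 h$ of $\Gamma$. Each $h$-square meets $\Gamma$ in a segment of length $O(h)$, and $\Gamma$ has length $|\Gamma|$. Hence $|\gamma_1|,|\gamma_2|\le C|\Gamma|/h$. More importantly, every ball of radius $Ch$ contains at most $N_0$ vertices of $\gamma_1\cup\gamma_2$, with $N_0$ depending only on $C$. None of these bounds involves the cut ratio.

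The main estimate would not use the crude inequality $\|w\|_{\ell^\infty}\le\|u_2\|_{\ell^\infty}$ from \Cref{lem:dmp}, because converting it to $\ell^2_h$ loses a factor $h^{-1/2}$. Instead I would exploit \emph{locality of the harmonic measure}. Write $w=\sum_{\eta\in\gamma_2}u_2(\eta)\,\omega_\eta$, where $\omega_\eta$ is the discrete harmonic measure of $\eta$: the Dirichlet solution with data $\delta_\eta$. By the maximum principle $\omega_\eta\ge0$ and $\sum_\eta\omega_\eta\equiv1$. Then $H$ is a matrix with nonnegative entries $H_{\xi\eta}=\omega_\eta(\xi)$ and unit row sums; this is \Cref{lem:H-Dirichlet}'s bound $\|H\|_{\ell^\infty\to\ell^\infty}\le1$. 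By Schur's test,
\[
\|H\|_{\ell^2\to\ell^2}^2\le \|H\|_{\infty\to\infty}\,\|H\|_{1\to1}=\max_{\eta\in\gamma_2}\sum_{\xi\in\gamma_1}\omega_\eta(\xi).
\]
The $h$-scalings on both sides of \eqref{eq:H-stab} are identical, so it then suffices to show that column sums are uniformly bounded: $\sum_{\xi\in\gamma_1}\omega_\eta(\xi)\le C$.

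This column-sum bound is the main obstacle. Each $\xi\in\gamma_1$ has a neighbour in $\gamma_2$ at distance $h$. For smooth $\Gamma$, discrete harmonic measure from a point at distance $\sim h$ from the boundary should decay like a discrete Poisson kernel, $\omega_\eta(\xi)\lesssim h\,\mathrm{dist}(\xi,\eta)^{-1}\cdot h/\mathrm{dist}(\xi,\eta)$, i.e.\ like $h^2/|\xi-\eta|^2$. Summing over the $O(h^{-1})$ points of $\gamma_1$ organised in dyadic shells $|\xi-\eta|\sim 2^kh$, with $O(2^k)$ points per shell by the counting estimate, gives $\sum_k 2^k\cdot 2^{-2k}\le C$.

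To prove this decay without cut-ratio dependence, I would first try a barrier argument. Take a local exterior-ball or interior-cone comparison function $\phi(x)=\min(1,c\,h\,\mathrm{dist}(x,\eta)^{-1}\cdot d(x)/h)$ built from the continuous Poisson kernel of a disc tangent to $\Gamma$ near $\eta$. Discrete superharmonicity of $\phi$ holds up to $O(h^2)$ corrections, which I would absorb via the uniform curvature bound. Alternatively, I would use the companion $P_1$ lift of \Cref{rem:companion-P1} together with the known $L^\infty$ and Green's-function estimates for piecewise-linear harmonic functions on quasi-uniform meshes of smooth domains. In that approach the irregular discrete boundary $\gamma_2$ sits within $O(h)$ of $\Gamma$, and the constants depend only on the shape regularity of the fixed right-triangle lattice, not on the cut position.

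If the pointwise kernel bound proves too delicate, a fallback is a weaker column bound. A discrete Harnack/Carleson estimate on boundary boxes of side $2^kh$ should give $\omega_\eta\le C2^{-\alpha k}$ on the $k$-th shell for some $\alpha>1$. Since that shell contains $O(2^k)$ points, any decay rate $\alpha>1$ suffices for the sum to converge.
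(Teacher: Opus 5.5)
\textbf{There is a genuine gap in your proposal.} Your reduction is sound: $H_{\xi\eta}=\omega_\eta(\xi)\ge0$ with unit row sums, the $h$-weights on both sides cancel, and Schur's test reduces \eqref{eq:H-stab} to a uniform bound on the column sums $\sum_{\xi\in\gamma_1}\omega_\eta(\xi)$. However, that bound carries the entire content of the lemma, and you do not prove it. You only list strategies, and none of them is carried through or available off the shelf.
\begin{itemize}
\item A pointwise decay $\omega_\eta(\xi)\lesssim h^2/|\xi-\eta|^2$, uniform in the cut, is a discrete Poisson-kernel (boundary Harnack) estimate. It would have to hold on an $h$-dependent staircase domain with $h$-scale reentrant corners, and that is hard.
\item The barrier you actually write down is $\min(1,c\,d(x)/|x-\eta|)$, since the factors of $h$ cancel. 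It only gives $\omega_\eta\lesssim h/|\xi-\eta|$ on $\gamma_1$. Summed over dyadic shells with $O(2^k)$ points each, this yields $O(\log h^{-1})$, not $O(1)$.
\item Finite element Green's function estimates are established for fixed or fitted domains. They do not come with constants known to be uniform over such staircase boundaries.
\item Generic discrete Harnack/Carleson arguments give only some unspecified H\"older exponent, not one exceeding $1$.
\end{itemize}
So the proof is incomplete exactly at the step you yourself flag as the main obstacle.

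\textbf{The missing idea is locality.} No long-range decay is needed, because $\gamma_1$ lies within two lattice steps of $\gamma_2$. Every active cell has vertices of both signs, so each $\xi\in\gamma_1$ either has a neighbour in $\gamma_2$ or has an interior neighbour that does.

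The paper exploits this through energy minimization. The discrete harmonic $w$ has Dirichlet energy no larger than a comparison extension supported within distance $2$ of $\gamma_2$, hence at most $C\sum|u_2|^2$. Telescoping along paths of length at most $2$ from $\xi\in\gamma_1$ to $\gamma_2$, with bounded overlap, then bounds $\sum_{\gamma_1}|w|^2$.

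Your Schur route can also be closed in a few lines using the same locality:
\begin{itemize}
\item Write $A:=4I-\text{adjacency}$ on $I=\mathcal V_h^{\rm int}$, so that $\omega_\eta|_I=A^{-1}b_\eta$ with $b_\eta(\zeta)=1$ for $\zeta\sim\eta$.
\item Symmetry of $A^{-1}$ gives $\sum_{\xi\in\gamma_1}\omega_\eta(\xi)=\sum_{\zeta\in I,\,\zeta\sim\eta}v(\zeta)$, where $Av=\mathbf 1_{\gamma_1}$ and $v=0$ on $\gamma_2$.
\item $v$ is harmonic on $I\setminus\gamma_1$, and every interior vertex adjacent to $\gamma_2$ lies in $\gamma_1$. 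Hence $\max v$ is attained on $\gamma_1$.
\item Evaluating the equation at that maximizer and at the intermediate vertex of the two-step path gives $v\le5$.
\end{itemize}
Consequently the column sums are at most $20$, and $\|H\|\le\sqrt{20}$, independently of $h$ and the cut.
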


\begin{proof}
By \Cref{lem:H-Dirichlet}, $Hu_2=w|_{\gamma_1}$, where $w$ is discrete
harmonic on $I:=\mathcal V_h^{\rm int}$ and $w|_B=u_2$ on $B:=\gamma_2$.
Let
\[
 \mathcal E_I:=\bigl\{e=\{\zeta,\zeta'\}:\zeta\sim\zeta',\
       \{\zeta,\zeta'\}\cap I\ne\varnothing\bigr\},
 \qquad
 \delta_ev:=v(\zeta)-v(\zeta'),
 \qquad
 \mathcal D_h[v]:=\frac12\sum_{e\in\mathcal E_I}|\delta_ev|^2.
\]
Every $\xi\in\gamma_1$ shares an active cell with an exterior vertex, so
$\operatorname{dist}_{I\cup B}(\xi,B)\le D_0:=2$. By its Euler--Lagrange
equations, $w$ minimizes $\mathcal D_h$ among all extensions of $u_2$. With
$d(\zeta):=\operatorname{dist}_{I\cup B}(\zeta,B)$ and a nearest
$\eta(\zeta)\in B$, the comparison function
\[
 \widetilde w(\zeta):=
 \begin{cases}
   2^{-d(\zeta)}u_2\bigl(\eta(\zeta)\bigr),&d(\zeta)\le D_0,\\
   0,&d(\zeta)>D_0.
 \end{cases}
\]
has trace $u_2$ on $B$. Since $D_0$ and the lattice degree are fixed, each
boundary value enters only uniformly many edge differences; hence
\begin{equation}\label{eq:H-test-energy}
   \mathcal D_h[w]\le\mathcal D_h[\widetilde w]
   \le C\sum_{\eta\in B}|u_2(\eta)|^2.
\end{equation}

For each $\xi\in\gamma_1$, choose a shortest path $\Pi(\xi)$ to
$\eta(\xi)\in B$; its length is at most $D_0$. Telescoping, followed by
$|a+b|^2\le2|a|^2+2|b|^2$ and Cauchy--Schwarz on the jump sum, gives
\[
 |w(\xi)|^2
 \le 2|u_2(\eta(\xi))|^2
      +2D_0\sum_{e\in\Pi(\xi)}|\delta_ew|^2.
\]
Each boundary vertex and each edge occurs in only uniformly many such paths.
Summing and using \eqref{eq:H-test-energy} therefore yields
$\sum_{\gamma_1}|w|^2\le C\sum_B|u_2|^2$. Multiplication by $h$ proves
\eqref{eq:H-stab}; all constants are cut independent.
\end{proof}

\subsection{Local extrapolation}\label{sec:extrap-construction}
The active CutFEM surface space may contain exterior vertices in $\gamma_3$, determined from $\gamma_1\cup\gamma_2$ by a local extrapolation. For $\eta\in\gamma_3$ and an admissible direction $e\in\{\pm e_1,\pm e_2\}$ with $\eta-he,\eta-2he\in\gamma_1\cup\gamma_2$, set
\begin{equation}\label{eq:3pt-extrap}
    v(\eta)=2v(\eta-he)-v(\eta-2he),
\end{equation}
averaging over two orthogonal admissible directions when available.

\begin{assumption}[Local extrapolation stencils]\label{ass:extrap-stencil}
Every $\eta\in\gamma_3$ has at least one admissible reconstruction of the form \eqref{eq:3pt-extrap}. The associated stencil patches have diameter $O(h)$ and uniformly finite overlap, independently of the cut position. In implementation, a uniformly local least-squares affine extrapolation may be substituted provided that every stencil uses a uniformly bounded number of nodes with uniformly bounded weights, is exact on affine functions, and retains uniformly finite patch overlap.
\end{assumption}

Writing the chosen relations in the form
\begin{equation}\label{eq:extrap}
   R_1u_1+R_2u_2+u_3=0
\end{equation}
with $R_1\in\mathbb{R}^{|\gamma_3|\times|\gamma_1|}$ and $R_2\in\mathbb{R}^{|\gamma_3|\times|\gamma_2|}$, defines sparse matrices $R_1$ and $R_2$ with a uniformly bounded number of entries per row.

\begin{proposition}[Stability and consistency]\label{prop:extrap-linear}
Under \Cref{ass:geom,ass:extrap-stencil}, and assuming that $P_2$ is invertible for the chosen layer-potential construction, there is a constant $C$ independent of $h$ and the cut configuration such that, with $J=R_1H+R_2$,
\begin{enumerate}
\item[(i)] $\|Jv_2\|_{\ell^2_h(\gamma_3)}\le C\|v_2\|_{\ell^2_h(\gamma_2)}$ for all $v_2\in\mathbb{R}^{|\gamma_2|}$;
\item[(ii)] for every $U\in W^{3,\infty}$ in a fixed tubular neighbourhood of $\Gamma$, $\bigl\|U^h_{\gamma_3}+R_1U^h_{\gamma_1}+R_2U^h_{\gamma_2}\bigr\|_{\ell^\infty(\gamma_3)}\le Ch^2\|U\|_{W^{3,\infty}}$;
\item[(iii)] constants are reproduced: $R_1\mathbf 1_{\gamma_1}+R_2\mathbf 1_{\gamma_2}+\mathbf 1_{\gamma_3}=0$, hence $J\mathbf 1_{\gamma_2}=-\mathbf 1_{\gamma_3}$ whenever $H\mathbf 1_{\gamma_2}=\mathbf 1_{\gamma_1}$.
\end{enumerate}
\end{proposition}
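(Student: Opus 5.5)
The three parts rest on one structural fact. Each row of the extrapolation relation \eqref{eq:extrap} is a fixed, uniformly bounded linear combination of nodal values on a patch of diameter $O(h)$. Each row of $R_1,R_2$ therefore has $O(1)$ nonzeros with $O(1)$ weights, and these patches have uniformly finite overlap by \Cref{ass:extrap-stencil}. For the three-point rule \eqref{eq:3pt-extrap} the weights are $2$ and $-1$, halved when two directions are averaged. For the least-squares variant they are bounded by assumption. I would treat (iii), (ii) and (i) in that order.

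For (iii), every admissible stencil reproduces constants. Concretely, $2\cdot1-1=1$, averaging preserves this, and the least-squares variant is exact on affine functions. Hence each row satisfies $\sum_j (R_1)_{\eta j}+\sum_j (R_2)_{\eta j}=-1$, which is $R_1\mathbf 1_{\gamma_1}+R_2\mathbf 1_{\gamma_2}+\mathbf 1_{\gamma_3}=0$. If $H\mathbf 1_{\gamma_2}=\mathbf 1_{\gamma_1}$, which holds by \Cref{lem:H-Dirichlet}, then $J\mathbf 1_{\gamma_2}=R_1\mathbf 1_{\gamma_1}+R_2\mathbf 1_{\gamma_2}=-\mathbf 1_{\gamma_3}$.

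For (ii), fix $\eta\in\gamma_3$ and an admissible direction $e$. Taylor expansion of $U$ about $\eta$ along the segment $\eta,\eta-he,\eta-2he$ gives
\[
U(\eta)-2U(\eta-he)+U(\eta-2he)=h^2\partial_e^2U(\eta)+O(h^3\|U\|_{W^{3,\infty}}).
\]
This is only $O(h^2\|U\|_{W^{2,\infty}})$, which is the claimed bound since $\|U\|_{W^{2,\infty}}\le\|U\|_{W^{3,\infty}}$. The patch lies in the fixed tubular neighbourhood because it is within $O(h)$ of $\Gamma$ and $h$ is small. Averaging two directions keeps the bound. For the least-squares variant I would use exactness on affine functions: subtract the first-order Taylor polynomial at $\eta$, note the remainder is $O(h^2\|U\|_{W^{2,\infty}})$ on the $O(h)$ patch, and use bounded weights. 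The row residual is exactly $U^h_{\gamma_3}+R_1U^h_{\gamma_1}+R_2U^h_{\gamma_2}$ at $\eta$, which gives the $\ell^\infty$ estimate.

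For (i), write $Jv_2=R_1(Hv_2)+R_2v_2$. For a sparse matrix $R$ with at most $m$ entries per row and column, bounded by $c$, Schur's test gives $\|R\|_{\ell^2\to\ell^2}\le mc$. The row bound is immediate. The column bound, that each node of $\gamma_1\cup\gamma_2$ appears in uniformly many stencils, comes from finite patch overlap together with the fact that only $O(1)$ lattice points lie within distance $O(h)$ of a given node. The scaled norms $\ell^2_h$ carry the same factor $h$ on $\gamma_1,\gamma_2,\gamma_3$, so
\[
\|Jv_2\|_{\ell^2_h(\gamma_3)}\le C\bigl(\|Hv_2\|_{\ell^2_h(\gamma_1)}+\|v_2\|_{\ell^2_h(\gamma_2)}\bigr)\le C(C_H+1)\|v_2\|_{\ell^2_h(\gamma_2)}
\]
by \Cref{lem:H-stab}. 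The only delicate point is verifying the uniform column bound independently of the cut. I would derive it from the counting argument above rather than from geometry of $\Gamma$, so it is cut-independent by construction.
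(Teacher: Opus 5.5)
Your proposal is correct and follows the paper's proof essentially step by step. You use constant exactness of the stencils together with $H\mathbf 1_{\gamma_2}=\mathbf 1_{\gamma_1}$ from \Cref{lem:H-Dirichlet} for (iii). For (ii) you use Taylor expansion, or affine exactness for the least-squares variant, which gives $O(h^2\|U\|_{W^{2,\infty}})$. For (i) you use the bounded row and column structure of $[R_1\mid R_2]$ together with \Cref{lem:H-stab}.

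The differences are only cosmetic. You order the parts differently, and you make the Schur test and the lattice-point counting behind the column bound explicit, where the paper simply invokes the finite-overlap clause of \Cref{ass:extrap-stencil}.
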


\begin{proof}
\emph{(i)} Let $A=[R_1\mid R_2]$ and $z=(Hv_2,v_2)$. Uniformly bounded
row size, coefficients, and column overlap from \Cref{ass:extrap-stencil} give
\[
 \|Jv_2\|_{\ell_h^2(\gamma_3)}^2=\|Az\|_{\ell_h^2(\gamma_3)}^2
 \lesssim \|Hv_2\|_{\ell_h^2(\gamma_1)}^2
          +\|v_2\|_{\ell_h^2(\gamma_2)}^2.
\]
Now apply \Cref{lem:H-stab}.

\emph{(ii)} For an explicit one-direction stencil, Taylor expansion gives
\[
 U(\eta)-2U(\eta-he)+U(\eta-2he)
 =h^2\partial_e^2U(\eta)
   +O\bigl(h^3\|U\|_{W^{3,\infty}}\bigr),
\]
so averaging preserves the $O(h^2)$ bound. For the residual $L_\eta$ of an
admissible substitute, subtract the first-order Taylor polynomial at $\eta$:
affine exactness cancels it, while the $O(h)$ patch diameter and bounded weights give
$|L_\eta(U)|\lesssim h^2\|U\|_{W^{2,\infty}}$. Taking the maximum proves~(ii).

\emph{(iii)} Exactness on constants gives
\[
 R_1\mathbf1_{\gamma_1}+R_2\mathbf1_{\gamma_2}
 +\mathbf1_{\gamma_3}=0.
\]
Together with $H\mathbf1_{\gamma_2}=\mathbf1_{\gamma_1}$ from
\Cref{lem:H-Dirichlet}, this implies
\[
 J\mathbf1_{\gamma_2}
 =-\mathbf1_{\gamma_3}.
\]

\end{proof}

\section{Reduced trial space and coupled formulation}\label{sec:reduced}

\subsection{Discrete extrapolation and harmonic constraint}

We now obtain two linear relations constraining $(u_1,u_2,u_3)$ to a subspace parametrized by $u_2$ alone. The first is the harmonic extension
\begin{equation}\label{eq:harm-12}
    u_1 = Hu_2,\qquad H=P_1P_2^{-1};
\end{equation}
the second is the extrapolation relation~\eqref{eq:extrap}. Combining
\eqref{eq:harm-12} and \eqref{eq:extrap} gives
\begin{equation}\label{eq:u3-from-u2}
    u_3=-Ju_2,\qquad J:=R_1H+R_2.
\end{equation}

Introduce the boundary-data space $\mathcal M_h:=\mathbb R^{|\gamma_2|}$, equipped when needed with the scaled norm $\ell_h^2(\gamma_2)$. At coefficient level define
\begin{equation}\label{eq:E-def}
E := \begin{pmatrix} H\\I\\-J\end{pmatrix}:\mathbb{R}^{|\gamma_2|}\to\mathbb{R}^{|\gamma|},\qquad\begin{pmatrix}u_1\\u_2\\u_3\end{pmatrix}=Eu_2.
\end{equation}
If $\{\phi_\xi\}_{\xi\in\gamma}$ is the nodal basis of $V_h$, the corresponding finite element extension operator is
\begin{equation}\label{eq:Eh-def}
   \mathcal E_h:\mathcal M_h\to V_h,\qquad
   \mathcal E_hv_2:=\sum_{\xi\in\gamma}(Ev_2)_\xi\phi_\xi.
\end{equation}
Because the $\gamma_2$ block of $E$ is the identity, $\mathcal E_h$ is injective.

\begin{definition}[Reduced active and trace spaces]\label{def:Vred3}
The reduced active finite element space and its surface trace space are
\begin{align}
   V_h^{\mathrm{red}}
   &:=\mathcal E_h\mathcal M_h
     =\bigl\{u_h\in V_h:u_1=Hu_2,\;u_3=-Ju_2\bigr\}\subset V_h,
     \label{eq:Vred-def}\\
   W_h^{\mathrm{red}}
   &:=\operatorname{Tr}_hV_h^{\mathrm{red}}
     \subset W_h^{\rm tr}\subset H^1(\Gamma^h).
     \label{eq:Wred-def}
\end{align}
\end{definition}

Equivalently, under the nodal coefficient isomorphism $V_h\simeq\mathbb R^{|\gamma|}$ and with coefficient ordering $(\gamma_1,\gamma_2,\gamma_3)$,
\begin{equation}\label{eq:constraint-kernel}
   V_h^{\mathrm{red}}\simeq\ker C_h=\operatorname{range}(E),\qquad
   C_h:=
   \begin{pmatrix}
      I&-H&0\\
      0&J&I
   \end{pmatrix}.
\end{equation}
The trace map need not be injective on the full ambient space $V_h$. Its quantitative injectivity on $V_h^{\rm red}$ is precisely the boundary-layer observability property of \Cref{lem:observability}; convergence of the surface trace and conditioning of its coefficient parametrization are therefore kept logically distinct.

\subsection{Symmetric Galerkin reduction}

We seek a trace $w_h\in W_h^{\mathrm{red}}$ satisfying the surface Galerkin identity on $W_h^{\mathrm{red}}$. Equivalently, choose an active representative $\mathcal E_hu_2\in V_h^{\mathrm{red}}$ and test with representatives $\mathcal E_hv_2$. Substitution at coefficient level yields
\begin{equation}\label{eq:reduced}
    K_{\mathrm{red}}u_2=b_{\mathrm{red}}, \quad K_{\mathrm{red}}:=E^TKE, \quad b_{\mathrm{red}}:=E^Tb.
\end{equation}

Thus $K_{\rm red}$ is symmetric by construction. The left multiplication by $E^T$ is essential: substituting only the trial relation would give an overdetermined residual system rather than a Galerkin method.

Constant reproduction in \Cref{lem:H-Dirichlet,prop:extrap-linear}(iii) gives
\begin{equation}\label{eq:constant-mode}
 E\mathbf1_{\gamma_2}=\mathbf1_\gamma,
 \qquad
 K_{\rm red}\mathbf1_{\gamma_2}=0,
\end{equation}
because the tangential gradient of the constant finite element function vanishes on $\Gamma^h$.

Accordingly, the constant mode is fixed by imposing
\begin{equation}\label{eq:discrete-gauge}
    \int_{\Gamma^h}\operatorname{Tr}_h(\mathcal E_hu_2)\,ds_h=0
\end{equation}
or, equivalently, by working on the quotient by constants. Since
$K_{\rm red}$ is symmetric and has the constant null vector, every reduced
right-hand side $r$ is replaced, when necessary, by its compatible projection
\[
 r^{\diamond}:=r
 -\frac{\mathbf1^Tr}{\mathbf1^T\mathbf1}\,\mathbf1 ,
\qquad \mathbf1^Tr^{\diamond}=0.
\]
For an inhomogeneous bulk equation $-\Delta u=f$, take $u^p=G_h*f_h$ as a particular lattice solution of $-L_hu^p=f_h$ (with $f_h$ the active-lattice sampling of $f$, having sufficient decay or compact support); the residual $w:=u-u^p$ is discrete-harmonic in the interior, so the reduction applies to $w$,
yielding
\begin{equation}\label{eq:w2-inhomogeneous}
    E^TKEw_2=E^Tb-E^TKu^p_\gamma .
\end{equation}
The trial set is the affine space $u^p_\gamma+\operatorname{range}(E)$. The particular solution need not satisfy the $\gamma_3$ extrapolation identity exactly; by \Cref{prop:extrap-linear}(ii), its residual is $O(h^2\|u^p\|_{W^{3,\infty}})$ and is of the same order as the extrapolation consistency error already retained. The corrected right-hand side in \eqref{eq:w2-inhomogeneous} is projected onto $\mathbf1^\perp$ by the formula above before the gauge is imposed.

\subsection{Density-parametrized formulation}\label{sec:density}

The direct formulation requires the inverse $P_2^{-1}$. A density-based variant avoids this by parametrizing $V_h^{\mathrm{red}}$ through a lattice density $q\in\mathbb{R}^{|\gamma_2|}$: with $u_1=P_1q$ and $u_2=P_2q$, the extrapolation relation gives $u_3=-(R_1P_1+R_2P_2)q$, and hence
\begin{equation}\label{eq:F-def}
    F:=\begin{pmatrix}P_1\\P_2\\-(R_1P_1+R_2P_2)\end{pmatrix}:\mathbb{R}^{|\gamma_2|}\to\mathbb{R}^{|\gamma|}.
\end{equation}
The density Galerkin system is
\begin{equation}\label{eq:reduced-density}
    \widetilde K_{\mathrm{red}}q=\widetilde b_{\mathrm{red}}, \quad \widetilde K_{\mathrm{red}}:=F^TKF, \quad \widetilde b_{\mathrm{red}}:=F^Tb.
\end{equation}

\begin{proposition}[Algebraic equivalence]\label{prop:equivalence}
If $P_2$ is invertible and $H=P_1P_2^{-1}$, then $F=EP_2$ and $\widetilde K_{\mathrm{red}}=P_2^TK_{\mathrm{red}}P_2$, $\widetilde b_{\mathrm{red}}=P_2^Tb_{\mathrm{red}}$. Consequently $q$ solves \eqref{eq:reduced-density} if and only if $u_2:=P_2q$ solves \eqref{eq:reduced}, and $Fq=Eu_2$.
\end{proposition}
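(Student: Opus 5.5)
The argument is purely algebraic and proceeds by checking $F=EP_2$ block by block, then transporting the Galerkin system through the invertible factor $P_2$. The relevant definitions are \eqref{eq:E-def} for $E$ and \eqref{eq:F-def} for $F$. Multiplying $E$ on the right by $P_2$, the three blocks are as follows.

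\begin{itemize}
\item The $\gamma_1$ block is $HP_2=P_1P_2^{-1}P_2=P_1$, using \eqref{eq:H-def}.
\item The $\gamma_2$ block is $IP_2=P_2$.
\item The $\gamma_3$ block is $-JP_2=-(R_1H+R_2)P_2=-(R_1P_1+R_2P_2)$, using $J=R_1H+R_2$ from \eqref{eq:u3-from-u2}.
\end{itemize}

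These three blocks are exactly the blocks of $F$, so $F=EP_2$.

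Substituting this factorization into the definitions in \eqref{eq:reduced-density} and \eqref{eq:reduced} gives
\[
\widetilde K_{\rm red}=F^TKF=P_2^TE^TKEP_2=P_2^TK_{\rm red}P_2,
\]
and
\[
\widetilde b_{\rm red}=F^Tb=P_2^TE^Tb=P_2^Tb_{\rm red}.
\]

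For the equivalence of solutions, first suppose $q$ solves \eqref{eq:reduced-density}. Then
\[
P_2^T\bigl(K_{\rm red}P_2q-b_{\rm red}\bigr)=0.
\]
Since $P_2$ is invertible, so is $P_2^T$, and hence $K_{\rm red}u_2=b_{\rm red}$ with $u_2:=P_2q$. Conversely, given a solution $u_2$ of \eqref{eq:reduced}, set $q:=P_2^{-1}u_2$. Multiplying \eqref{eq:reduced} on the left by $P_2^T$ gives \eqref{eq:reduced-density}. Finally,
\[
Fq=EP_2q=Eu_2.
\]

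There is no real obstacle here. The only points needing care are that the statement is an if-and-only-if, which requires invertibility of $P_2^T$ and not merely of $P_2$, and that $K_{\rm red}$ is singular on constants by \eqref{eq:constant-mode}. The equivalence therefore holds for the full (possibly non-unique) solution sets, since the map $q\mapsto P_2q$ is a bijection. If the gauge \eqref{eq:discrete-gauge} is imposed, it is stated in terms of $\mathcal E_hu_2$, which equals the finite element function with coefficients $Fq$. The gauge condition is therefore the same under this bijection, and the equivalence persists with the gauge imposed.
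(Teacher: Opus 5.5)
Your proposal is correct and matches the paper's proof: check $F=EP_2$ block by block using $P_1=HP_2$ and $J=R_1H+R_2$, then read off the congruences $\widetilde K_{\rm red}=P_2^TK_{\rm red}P_2$ and $\widetilde b_{\rm red}=P_2^Tb_{\rm red}$. You also spell out the solution-set bijection (via invertibility of $P_2^T$) and the compatibility of the gauge, which the paper leaves implicit.
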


\begin{proof}
Since $H=P_1P_2^{-1}$, $P_1=HP_2$, hence $F=\bigl(HP_2,P_2,-(R_1H+R_2)P_2\bigr)^T=EP_2$. The two congruence identities follow.
\end{proof}

\begin{remark}[LGF normalization invariance]\label{rem:LGF-normalization}
Replacing the two-dimensional LGF by $G_h+c$ adds $c\,\mathbf 1_\gamma\mathbf 1_{\gamma_2}^T$ to the single-layer reconstruction $F$. Since $K\mathbf 1_\gamma=0$ and $K=K^T$, $F^TKF$ is unchanged. If the shifted block $S_{22}+c\mathbf1\mathbf1^T$ is invertible, constant reproduction also gives
\[
 (S_{12}+c\mathbf1\mathbf1^T)(S_{22}+c\mathbf1\mathbf1^T)^{-1}
 =S_{12}S_{22}^{-1},
\]
so the direct harmonic extension is unchanged as well. An arbitrary additive shift can, however, make the rank-one-updated trace block singular; this is why the canonical normalization is fixed before invoking the invertibility theorem.
\end{remark}

The direct and density formulations generate the same reduced trial space whenever $P_2$ is invertible. They are not algebraically equivalent as preconditioned linear systems: $\widetilde K_{\mathrm{red}}=P_2^TK_{\mathrm{red}}P_2$ is a congruence, not a similarity, and $P_2$ is not an isometry. The spectrum and condition number may differ substantially. This distinction is essential for the single-layer formulation: the single-layer trace operator has order $-1$, so the congruence by $P_2=S_{22}$ changes the effective algebraic order of the surface stiffness operator and acts as an operator preconditioner. Computationally, the density formulation avoids forming $P_2^{-1}$ explicitly, is amenable to a matrix-free implementation, and preserves Galerkin symmetry.

The inhomogeneity is handled similarly: with $u_\gamma=u^p_\gamma+Fq$ and homogeneous test functions $v_\gamma=F\mu$,
\begin{equation}\label{eq:reduced-density-inhomog}
    \widetilde K_{\mathrm{red}}q = \widetilde b_{\mathrm{red}}-F^TKu^p_\gamma.
\end{equation}
As in the direct formulation, the affine trial set $u^p_\gamma+\operatorname{range}(F)$ leaves only the $O(h^2)$ extrapolation residual of the particular solution, and the corrected load is projected onto the exact range before gauge fixing.

\section{\texorpdfstring{Approximation and error analysis}{Approximation and error analysis}}\label{sec:analysis}

This section establishes a priori approximation and error estimates for the reduced formulation, following the discrete-extension framework~\citep{burman2022cutfem}. In the notation of \Cref{def:Vred3}, the construction has the embedding structure
\begin{equation}\label{eq:extension-space-chain}
   \mathcal M_h\xrightarrow{\;\mathcal E_h\;}
   V_h^{\rm red}\hookrightarrow V_h,\qquad
   W_h^{\rm red}=\operatorname{Tr}_hV_h^{\rm red}
   \hookrightarrow W_h^{\rm tr}.
\end{equation}
The analysis separates four properties of this chain: left-invertibility on the master block and constant reproduction, upper stability of $\mathcal E_h$, approximation by $W_h^{\rm red}$, and lower trace stability (observability) of $\operatorname{Tr}_h\mathcal E_h$. Constant reproduction and reduced-space approximation drive the surface error analysis. The upper and lower stability bounds identify surface traces with coefficient vectors and control the reduced operator. The distinguishing feature from polynomial discrete extension is that $\mathcal E_h$ combines the bulk discrete harmonic extension $H=P_1P_2^{-1}$ with the extrapolation \eqref{eq:extrap}, rather than extending from a neighbouring interior cell or adding a stabilization term.

\subsection{Analytical assumptions}\label{sec:assumptions}

Under the active-grid geometry of \Cref{ass:geom}, the closest-point projection $p:\Gamma^h\to\Gamma$ satisfies the standard geometric consistency estimates. For $v$ on $\Gamma$, write $v^e:=v\circ p$. Then $\|p-\mathrm{id}\|_{L^\infty(\Gamma^h)}\lesssim h^2$, $\|\mathbf n\circ p-\mathbf n_h\|_{L^\infty(\Gamma^h)}\lesssim h$, and $\|1-ds/ds_h\|_{L^\infty(\Gamma^h)}\lesssim h^2$~\citep{burman2015stabilized,grande2018analysis,olshanskii2009finite}.

\begin{assumption}[Data and regularity]\label{ass:data}
The data satisfy $g\in L^2(\Gamma)$, $\int_\Gamma g\,ds=0$, and the solution is normalized by $\int_\Gamma u\,ds=0$. Moreover, $u\in H^2(\Gamma)$. The harmonic lift $U$ of $u$ (the unique harmonic extension of $u$ to $\Omega$) is assumed to satisfy $U\in W^{4,\infty}(\Omega\cap U_\Gamma)$ on the interior side of a fixed tubular neighbourhood $U_\Gamma$ of $\Gamma$. Interior estimates for the harmonic function control the compact set $\Omega\setminus U_\Gamma$ by this collar norm. Since $\Omega$ is smooth, let
\[
 \mathfrak E:W^{4,\infty}(\Omega)\longrightarrow W^{4,\infty}(\mathbb R^2)
\]
be a bounded Sobolev extension operator, for example Stein's extension operator, satisfying $(\mathfrak EV)|_\Omega=V$. Write $\widetilde U:=\mathfrak EU$. Then
\[
 \|\widetilde U\|_{W^{4,\infty}(\mathbb R^2)}
 \lesssim\|U\|_{W^{4,\infty}(\Omega)}
 \lesssim\|U\|_{W^{4,\infty}(\Omega\cap U_\Gamma)}.
\]
Only $\widetilde U$ is sampled at exterior stencil nodes; it is not assumed harmonic outside $\Omega$. This regularity is used for nodal interpolation, pointwise LGF consistency, and local extrapolation consistency. Global boundary regularity gives $U\in H^{5/2}(\Omega)$ for $H^2(\Gamma)$ Dirichlet data; the stronger one-sided $W^{4,\infty}$ hypothesis is retained only for the pointwise approximation result.
\end{assumption}

\subsection{Mean-zero spaces and harmonic-extension consistency}
\label{sec:harmonic-consistency}

The mean-zero reduced active and trace spaces are
\begin{equation}\label{eq:Vreddiamond}
\begin{aligned}
   V_{h,\diamond}^{\rm red}
   &:= \left\{v_h\in V_h^{\rm red}:\int_{\Gamma^h}\operatorname{Tr}_hv_h\,ds_h=0\right\},\\
   W_{h,\diamond}^{\rm red}
   &:=\operatorname{Tr}_hV_{h,\diamond}^{\rm red}
     =W_h^{\rm red}\cap H^1_\diamond(\Gamma^h).
\end{aligned}
\end{equation}
The functional Galerkin and error analysis is posed on $W_{h,\diamond}^{\rm red}$, where the intrinsic surface norm is a genuine norm. Statements about the active representative or its coefficient vector additionally invoke observability when injectivity of $\operatorname{Tr}_h\mathcal E_h$ is required.

\begin{proposition}[Consistency of the LGF harmonic extension]\label{prop:H-consistency}
Under \Cref{ass:geom,ass:data}, assume that $P_2$ is invertible for the chosen layer-potential construction. Let $U$ be harmonic in $\Omega$ and satisfy the one-sided regularity of \Cref{ass:data}, and let $\widetilde U=\mathfrak E U$.
\begin{align}\label{eq:H-consistency}
    \|\widetilde U^h_{\gamma_1}-H\widetilde U^h_{\gamma_2}\|_{\ell^\infty(\gamma_1)}
    \le Ch^2\|\widetilde U\|_{W^{4,\infty}(\mathbb R^2)},
\end{align}
with $C$ independent of $h$ and the cut configuration.
\end{proposition}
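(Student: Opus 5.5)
By \Cref{lem:H-Dirichlet}, $H\widetilde U^h_{\gamma_2}=w|_{\gamma_1}$, where $w$ is the unique discrete harmonic function on $\mathcal V_h^{\rm int}$ with Dirichlet data $\widetilde U^h_{\gamma_2}$ on $\gamma_2=\partial_h\mathcal V_h^{\rm int}$ (\Cref{lem:dmp}). This reduces the estimate to a standard lattice truncation-error and stability argument for the Dirichlet problem on $\mathcal V_h^{\rm int}$. The layer-potential representation then plays no further role: invertibility of $P_2$ is used only through \Cref{lem:H-Dirichlet}.

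\emph{Step 1: error equation.} Set $e:=\widetilde U^h-w$ on $\mathcal V_h^{\rm int}\cup\gamma_2$. Then $e=0$ on $\gamma_2$. At every $\xi\in\mathcal V_h^{\rm int}$ we have $-L_he(\xi)=-L_h\widetilde U^h(\xi)=:\tau(\xi)$, because $L_hw(\xi)=0$.

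\emph{Step 2: truncation error.} This is the step I expect to be the delicate one, because near $\Gamma$ the five-point stencil at an interior node $\xi$ reaches exterior nodes in $\gamma_2$. At those nodes $\widetilde U$ is only a Sobolev extension and is not harmonic. The point is that this does not matter. Since $\widetilde U\in W^{4,\infty}(\mathbb R^2)$ globally, a fourth-order Taylor expansion about $\xi$ along each axis gives
\[
L_h\widetilde U(\xi)=\Delta\widetilde U(\xi)+\frac{h^2}{12}\bigl(\partial_1^4\widetilde U+\partial_2^4\widetilde U\bigr)(\xi^\ast)\text{-type remainder},
\]
so $|L_h\widetilde U(\xi)-\Delta\widetilde U(\xi)|\le Ch^2\|\widetilde U\|_{W^{4,\infty}(\mathbb R^2)}$. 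Since $\xi\in\Omega$ (because $\psi(\xi)<0$) and $\widetilde U=U$ on $\Omega$, we get $\Delta\widetilde U(\xi)=\Delta U(\xi)=0$. Hence $\|\tau\|_{\ell^\infty(\mathcal V_h^{\rm int})}\le Ch^2\|\widetilde U\|_{W^{4,\infty}}$, uniformly in the cut position. No one-sided or Shortley--Weller correction is needed, because the boundary data are sampled exactly at lattice nodes.

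\emph{Step 3: stability via a barrier.} Fix $x_0\in\Omega$ and let $R:=\operatorname{diam}(\Omega)+1$. Define $\phi(x):=\tfrac14\bigl(R^2-|x-x_0|^2\bigr)$. The five-point Laplacian is exact on quadratics, so $-L_h\phi=1$. Moreover $0\le\phi\le R^2/4$ on $\mathcal V_h^{\rm int}\cup\gamma_2$ for $h$ small, since all these nodes lie within $O(h)$ of $\overline\Omega$. Consider $z_\pm:=\pm e-\|\tau\|_\infty\phi$. It satisfies $-L_hz_\pm\le0$ on $\mathcal V_h^{\rm int}$ and $z_\pm\le0$ on $\gamma_2$.

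The proof of \Cref{lem:dmp} applies verbatim to discrete subharmonic functions: at an interior maximum, $z(\xi)\le\tfrac14\sum_{\eta\sim\xi}z(\eta)$ forces all neighbours to attain the maximum, and this propagates to $\gamma_2$ by connectivity from \Cref{ass:geom}. This one-sided version gives $z_\pm\le0$, hence
\[
|e|\le\tfrac{R^2}{4}\|\tau\|_\infty\le Ch^2\|\widetilde U\|_{W^{4,\infty}(\mathbb R^2)}\quad\text{on }\mathcal V_h^{\rm int}.
\]
Restricting to $\gamma_1\subset\mathcal V_h^{\rm int}$ yields \eqref{eq:H-consistency}. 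The constant depends only on $\operatorname{diam}\Omega$ and the Taylor constant, and so is independent of $h$ and of the cut configuration.
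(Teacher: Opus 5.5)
Your proposal is correct and follows the paper's proof essentially step for step. Both arguments use the Dirichlet interpretation from \Cref{lem:H-Dirichlet}, the five-point truncation estimate together with $\Delta\widetilde U=0$ at interior nodes, and a quadratic barrier with the discrete maximum principle; the only differences are cosmetic (you normalize the barrier so that $-L_h\phi=1$ and apply a one-sided maximum principle to $\pm e-\|\tau\|_\infty\phi$, whereas the paper applies the minimum principle to $\psi\pm e$).
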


\begin{proof}
Let $w$ be the discrete Dirichlet extension of
$\widetilde U^h|_{\gamma_2}$ and set $e=w-\widetilde U^h$. Then
$w|_{\gamma_1}=H\widetilde U^h_{\gamma_2}$ by \Cref{lem:H-Dirichlet}, while
$\Delta\widetilde U(\xi)=0$ for $\xi\in\mathcal V_h^{\rm int}$. The
five-point truncation estimate therefore gives
\[
 e|_{\gamma_2}=0,
 \qquad
 \|L_he\|_{\ell^\infty(\mathcal V_h^{\rm int})}
 \le C_0h^2\|\widetilde U\|_{W^{4,\infty}(\mathbb R^2)}.
\]
Choose fixed $x_0$ and $R$, independent of $h$ and the cut, such that
$\overline{\mathcal V}_h\subset B_R(x_0)$, and set
\[
 \psi(x):=M\bigl(R^2-|x-x_0|^2\bigr),
 \qquad
 M:=\frac{C_0}{4}h^2\|\widetilde U\|_{W^{4,\infty}(\mathbb R^2)}.
\]
Since $L_h\psi=-4M$, one has $L_h(\psi\pm e)\le0$ in
$\mathcal V_h^{\rm int}$ and $\psi\pm e\ge0$ on $\gamma_2$. The discrete
minimum principle yields $|e|\le\psi\le MR^2$; restriction to $\gamma_1$
proves \eqref{eq:H-consistency}. No harmonic continuation across $\Gamma$ is
used; the extension only supplies values for stencils crossing $\Gamma$.
\end{proof}

\subsection{Approximation by the reduced space}
\label{sec:approx-Vred}

\begin{lemma}[Approximation by the reduced space]\label{lem:trace-approx}
Under \Cref{ass:geom,ass:extrap-stencil,ass:data}, assume that $P_2$ is invertible for the chosen layer-potential construction. Then there exist $v_{\star,2}\in\mathcal M_h$ and
$w_\star:=\operatorname{Tr}_h(\mathcal E_hv_{\star,2})\in W_h^{\rm red}$ such that
\begin{equation}\label{eq:trace-approx}
   \|u^e-w_\star\|_{L^2(\Gamma^h)}
   +h\|u^e-w_\star\|_{H^1(\Gamma^h)}
   \lesssim h^2\bigl(\|u\|_{H^2(\Gamma)}+\|U\|_{W^{4,\infty}(\Omega\cap U_\Gamma)}\bigr),
\end{equation}
where $u^e = u \circ p$, $U$ is the harmonic lift, and exterior nodal values are those of $\widetilde U=\mathfrak E U$. Moreover, subtracting the average of $w_\star$ over $\Gamma^h$ gives an element of $W_{h,\diamond}^{\rm red}$ satisfying the same estimate.
\end{lemma}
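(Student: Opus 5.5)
We take the master data to be the exact samples, $v_{\star,2}:=\widetilde U^h_{\gamma_2}$, and compare the reduced representative $\mathcal E_hv_{\star,2}$ with the standard nodal $\mathcal Q_1$ interpolant $I_h\widetilde U\in V_h$, whose coefficient vector is $(\widetilde U^h_{\gamma_1},\widetilde U^h_{\gamma_2},\widetilde U^h_{\gamma_3})$. Set $w_\star:=\operatorname{Tr}_h\mathcal E_hv_{\star,2}$. We split
\[
 u^e-w_\star=(u^e-\operatorname{Tr}_hI_h\widetilde U)+\operatorname{Tr}_h(I_h\widetilde U-\mathcal E_hv_{\star,2})
\]
and bound the two pieces separately.

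\emph{Interpolation term.} On $\Gamma^h$ we have $\widetilde U-u^e=\widetilde U-\widetilde U\circ p$. Because $\widetilde U=U$ on $\Omega$, $U|_\Gamma=u$, and $|p-\mathrm{id}|\lesssim h^2$, this difference is $O(h^2\|\widetilde U\|_{W^{1,\infty}})$ pointwise. Its tangential gradient is $O(h)$, using $\|\mathbf n\circ p-\mathbf n_h\|\lesssim h$ and $Dp=\mathcal P+O(h)$. The cellwise $\mathcal Q_1$ interpolation error satisfies $\|\widetilde U-I_h\widetilde U\|_{L^\infty(K)}+h\|\nabla(\widetilde U-I_h\widetilde U)\|_{L^\infty(K)}\lesssim h^2\|\widetilde U\|_{W^{2,\infty}}$, so it carries over to the segments $\Gamma_K^h$. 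The total length of $\Gamma^h$ is $O(1)$, so these $L^\infty$ bounds give the $L^2$ and $h\cdot H^1$ bounds of order $h^2$. The extension bound in \Cref{ass:data} then converts every norm of $\widetilde U$ to the collar norm of $U$.

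\emph{Reduced-space defect.} The difference $d:=I_h\widetilde U-\mathcal E_hv_{\star,2}$ has zero $\gamma_2$ coefficients. Its $\gamma_1$ coefficients are $\widetilde U^h_{\gamma_1}-H\widetilde U^h_{\gamma_2}$, which is $O(h^2)$ in $\ell^\infty$ by \Cref{prop:H-consistency}. Its $\gamma_3$ coefficients are $\widetilde U^h_{\gamma_3}+J\widetilde U^h_{\gamma_2}$, which we rewrite as
\[
 \bigl(\widetilde U^h_{\gamma_3}+R_1\widetilde U^h_{\gamma_1}+R_2\widetilde U^h_{\gamma_2}\bigr)-R_1\bigl(\widetilde U^h_{\gamma_1}-H\widetilde U^h_{\gamma_2}\bigr).
\]
The first bracket is $O(h^2)$ by \Cref{prop:extrap-linear}(ii). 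The second is $O(h^2)$ because $R_1$ has uniformly bounded rows and weights (\Cref{ass:extrap-stencil}) and the bracket is controlled by \Cref{prop:H-consistency}. Hence all coefficients of $d$ are $O(h^2\|\widetilde U\|_{W^{4,\infty}})$ in $\ell^\infty$, uniformly in the cut. On each active cell, $d$ is a $\mathcal Q_1$ function with these nodal values, so $\|d\|_{L^\infty(K)}\lesssim h^2$ and, by the inverse estimate, $\|\nabla d\|_{L^\infty(K)}\lesssim h$. Restricting to $\Gamma_K^h$ and integrating over $\Gamma^h$ gives $\|\operatorname{Tr}_hd\|_{L^2(\Gamma^h)}+h\|\operatorname{Tr}_hd\|_{H^1(\Gamma^h)}\lesssim h^2$. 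No small-cut issue arises here, because only $L^\infty$ cell bounds are used and never lower bounds on the cut length.

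\emph{Mean-zero correction.} Let $c:=|\Gamma^h|^{-1}\int_{\Gamma^h}w_\star\,ds_h$. By constant reproduction \eqref{eq:constant-mode}, $w_\star-c=\operatorname{Tr}_h\mathcal E_h(v_{\star,2}-c\mathbf 1)$ lies in $W_{h,\diamond}^{\rm red}$. We then bound
\[
 |c|\le |\Gamma^h|^{-1}\Bigl(\Bigl|\int_{\Gamma^h}(w_\star-u^e)\,ds_h\Bigr|+\Bigl|\int_{\Gamma^h}u^e\,ds_h\Bigr|\Bigr).
\]
The first integral is $O(h^2)$ by the estimate just proved. For the second, $\int_\Gamma u\,ds=0$ and $|1-ds/ds_h|\lesssim h^2$ give $O(h^2\|u\|_{L^2(\Gamma)})$. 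A constant has zero tangential gradient, so subtracting $c$ preserves the estimate.

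\emph{Expected obstacle.} The step I expect to need the most care is the $\gamma_3$ defect. We must ensure that the $\ell^\infty$ consistency of $H$ and of the extrapolation combine without loss of powers of $h$. This works only because both estimates hold pointwise and uniformly in the cut, and $R_1$ has bounded weights. A route that passed through $\ell^2_h$ stability (\Cref{lem:H-stab}) instead would cost a factor of $h^{-1/2}$, so I would deliberately keep everything in $\ell^\infty$.
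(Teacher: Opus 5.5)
Your proposal is correct and follows essentially the same route as the paper: the choice $v_{\star,2}=\widetilde U^h_{\gamma_2}$, the defect $I_h\widetilde U-\mathcal E_hv_{\star,2}$ bounded in $\ell^\infty$ through \Cref{prop:H-consistency} and the identical rewriting of its $\gamma_3$ block via \Cref{prop:extrap-linear}(ii) with bounded $R_1$ weights, full-cell $\mathcal Q_1$ inverse estimates, and the same $O(h^2)$ bound on the mean for the gauge correction. The only difference is cosmetic: you first group the closest-point and interpolation errors into one term and then split them, while the paper writes the three-term decomposition directly.
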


\begin{proof}
Set $\mathcal N:=\|u\|_{H^2(\Gamma)}+
\|U\|_{W^{4,\infty}(\Omega\cap U_\Gamma)}$ and
$\|z\|_{*,h}:=\|z\|_{L^2(\Gamma^h)}+h\|z\|_{H^1(\Gamma^h)}$.
Let $I_h\widetilde U$ be the active-mesh interpolant, set
$v_{\star,2}:=\widetilde U^h_{\gamma_2}$, and put
$d_h:=I_h\widetilde U-\mathcal E_hv_{\star,2}$. Then $d_h|_{\gamma_2}=0$
and \Cref{prop:H-consistency} gives
$\|d_h\|_{\ell^\infty(\gamma_1)}\lesssim h^2\mathcal N$. Moreover,
\[
 d_h|_{\gamma_3}
 =\bigl(\widetilde U^h_{\gamma_3}+R_1\widetilde U^h_{\gamma_1}
        +R_2\widetilde U^h_{\gamma_2}\bigr)-R_1d_h|_{\gamma_1}.
\]
Thus \Cref{prop:extrap-linear}(ii), the bounded row weights of $R_1$, and
full-cell $\mathcal Q_1$ trace and inverse estimates yield
\[
 \|d_h\|_{\ell^\infty(\gamma)}\lesssim h^2\mathcal N,
 \qquad
 \|\operatorname{Tr}_hd_h\|_{*,h}\lesssim h^2\mathcal N.
\]
Writing $w_\star=\operatorname{Tr}_h(\mathcal E_hv_{\star,2})$, split on
$\Gamma^h$
\[
 u^e-w_\star=(u^e-\widetilde U)
 +(\widetilde U-I_h\widetilde U)+\operatorname{Tr}_hd_h.
\]
The closest-point estimates preceding \Cref{ass:data}, a normal-coordinate
Taylor expansion, and $\mathcal Q_1$ interpolation give
\[
 \|u^e-\widetilde U\|_{*,h}
 +\|\widetilde U-I_h\widetilde U\|_{*,h}\lesssim h^2\mathcal N,
\]
where all functions are restricted to $\Gamma^h$ and boundedness of
$\mathfrak E$ was used. This proves \eqref{eq:trace-approx}.

Finally, let $\overline w_\star:=|\Gamma^h|^{-1}\int_{\Gamma^h}w_\star\,ds_h$.
Since $\int_\Gamma u\,ds=0$ and the surface-measure error is $O(h^2)$,
$|\Gamma^h|^{-1}|\int_{\Gamma^h}u^e\,ds_h|\lesssim h^2\mathcal N$; hence
\eqref{eq:trace-approx} and Cauchy--Schwarz give
$|\overline w_\star|\lesssim h^2\mathcal N$. Constants belong to
$W_h^{\rm red}$, so
$w_\star-\overline w_\star\in W_{h,\diamond}^{\rm red}$, and subtracting this
constant preserves the stated estimate.
\end{proof}

\subsection{Coercivity and consistency}
\label{sec:coercivity}

For $w_h,z_h\in W_h^{\rm tr}$ define the intrinsic surface forms
\[
a_h(w_h,z_h):=\int_{\Gamma^h}\nabla_{\Gamma^h}w_h\cdot\nabla_{\Gamma^h}z_h\,ds_h ,\quad
\ell_h(z_h):=\int_{\Gamma^h}g^ez_h\,ds_h,
\]
where $g^e = g\circ p$. If $w_h=\operatorname{Tr}_h\widehat w_h$ and $z_h=\operatorname{Tr}_h\widehat z_h$ for active representatives in $V_h$, then $\nabla_{\Gamma^h}w_h=\mathcal P_h\nabla\widehat w_h$ and similarly for $z_h$, so this definition agrees with the assembled form \eqref{eq:local-stiff} and is independent of the chosen representatives. In particular, $a_h(w_h,w_h)=|w_h|_{H^1(\Gamma^h)}^2$. The method of \Cref{sec:reduced} is equivalent at trace level to the Galerkin problem
\begin{equation}\label{eq:reduced-galerkin}
    \text{find }u_h\in W_{h,\diamond}^{\mathrm{red}}:\quad a_h(u_h,v_h)=\ell_h(v_h),\quad\forall v_h\in W_{h,\diamond}^{\mathrm{red}}.
\end{equation}
By Cauchy--Schwarz and the closest-point norm equivalence,
\[
 |a_h(w_h,z_h)|\le\|w_h\|_{H^1(\Gamma^h)}\|z_h\|_{H^1(\Gamma^h)},
 \qquad
 |\ell_h(z_h)|\lesssim\|g\|_{L^2(\Gamma)}\|z_h\|_{H^1(\Gamma^h)}.
\]
Thus the form and load are uniformly continuous. The trace problem is uniquely
solvable by the coercivity estimate below. Uniqueness of the parameter vector
representing $u_h$ is a separate algebraic question governed by
\Cref{lem:observability}.

\begin{lemma}[Uniform Poincar\'e inequality and dual regularity]\label{lem:surface-estimates}
Let $S_h=|\Gamma^h|$.
\begin{enumerate}
\item[(i)] Every $v\in H^1_\diamond(\Gamma^h)$ satisfies
\[
 \|v\|_{L^2(\Gamma^h)}\le\frac{S_h}{2\pi}|v|_{H^1(\Gamma^h)}.
\]
Consequently,
\begin{equation}\label{eq:coercivity}
 a_h(v_h,v_h)\gtrsim\|v_h\|_{H^1(\Gamma^h)}^2,
 \qquad v_h\in W_{h,\diamond}^{\rm red}.
\end{equation}
\item[(ii)] If $f_h\in L^2(\Gamma^h)$ has zero mean and $\phi_h\in H^1_\diamond(\Gamma^h)$ solves
\[
 \int_{\Gamma^h}\nabla_{\Gamma^h}z\cdot\nabla_{\Gamma^h}\phi_h\,ds_h
 =\int_{\Gamma^h}f_hz\,ds_h,
 \qquad z\in H^1_\diamond(\Gamma^h),
\]
then
\begin{equation}\label{eq:polygon-dual-regularity}
 \|\phi_h\|_{H^2(\Gamma^h)}\lesssim\|f_h\|_{L^2(\Gamma^h)}.
\end{equation}
Here $H^2(\Gamma^h)$ is the periodic Sobolev space defined through the
arclength parametrization of $\Gamma^h$.
\end{enumerate}
The hidden constants depend on $\Gamma$ but are independent of $h$ and of the cut configuration.
\end{lemma}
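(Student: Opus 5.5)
The key observation is that $\Gamma^h$ is a closed polygon of length $S_h$. Its arclength parametrization $s\mapsto x_h(s)$, $s\in[0,S_h)$, is Lipschitz, and the surface gradient of a function on it is simply the derivative with respect to arclength. Both statements in \Cref{lem:surface-estimates} therefore reduce to one-dimensional periodic estimates on the circle $\mathbb R/S_h\mathbb Z$. The only geometric input needed is that $S_h$ is bounded above and below uniformly in $h$ and the cut. This follows from $\|1-ds/ds_h\|_{L^\infty(\Gamma^h)}\lesssim h^2$ together with the closest-point projection being a bijection $\Gamma^h\to\Gamma$ for small $h$. These give $|S_h-|\Gamma||\lesssim h^2$, hence $S_h\simeq|\Gamma|$ independently of the cut configuration.

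For (i), I will write $v\circ x_h$ as an $S_h$-periodic $H^1$ function with zero mean. I then expand it in the Fourier basis $e^{2\pi i k s/S_h}$; the zero mode vanishes because $\int_{\Gamma^h}v\,ds_h=0$. Parseval gives
\[
\|v'\|^2=\sum_{k\ne0}(2\pi k/S_h)^2|\hat v_k|^2\ge(2\pi/S_h)^2\|v\|^2,
\]
which is the stated Wirtinger constant. The coercivity estimate \eqref{eq:coercivity} follows by adding $|v_h|_{H^1}^2$ to both sides, since $a_h(v_h,v_h)=|v_h|_{H^1(\Gamma^h)}^2$. This uses $W_{h,\diamond}^{\rm red}\subset H^1_\diamond(\Gamma^h)$ (Definition \eqref{eq:Vreddiamond}) and the uniform bound on $S_h$.

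For (ii), I take $z=\phi_h$ and apply part (i) to get
\[
|\phi_h|_{H^1}^2\le\|f_h\|\,\|\phi_h\|\lesssim\|f_h\|\,|\phi_h|_{H^1},
\]
which controls $\|\phi_h\|_{H^1}$. Next, for every periodic test function $z\in H^1$, I note that $z-\bar z\in H^1_\diamond$ and $\int f_h=0$, so the variational identity holds for all $z\in H^1(\Gamma^h)$. In the arclength variable this says $\phi_h'$ has weak derivative $-f_h\circ x_h$. Hence $\phi_h\circ x_h\in H^2_{\rm per}$ with $\|\phi_h''\|=\|f_h\|$, and combining with the $H^1$ bound gives \eqref{eq:polygon-dual-regularity}.

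The step that needs care is the meaning of $H^2(\Gamma^h)$ on a polygon. The function $\phi_h$ is not $H^2$ in the ambient sense across corners, so the estimate has to be read in the arclength-periodic sense, as the statement stipulates. In the weak-derivative argument I must check that the kinks of $\Gamma^h$ contribute no interface terms. They do not, because the arclength parametrization is the intrinsic chart and $\nabla_{\Gamma^h}$ coincides with $d/ds$ segmentwise, with continuity of $\phi_h$ across vertices built into $H^1$. All constants then depend only on upper and lower bounds for $S_h$, hence only on $\Gamma$.
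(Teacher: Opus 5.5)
Your proposal is correct and follows the paper's proof. Both reduce to the arclength-periodic setting on $\mathbb R/S_h\mathbb Z$, obtain (i) from the periodic Wirtinger inequality and (ii) from the one-dimensional equation $-\phi_h''=f_h$, and take uniformity from $S_h\to|\Gamma|$. You supply details the paper leaves implicit: the Fourier proof of Wirtinger, the energy bound for $\|\phi_h\|_{H^1}$ via $z=\phi_h$, the extension of test functions to all of $H^1(\Gamma^h)$ using $\int f_h=0$, and the quantitative bound $|S_h-|\Gamma||\lesssim h^2$.
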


\begin{proof}
Parametrize the connected closed polygon $\Gamma^h$ by arclength on
$[0,S_h)$. The periodic Wirtinger inequality gives~(i), and hence
\[
 \|v_h\|_{H^1(\Gamma^h)}^2
 \le\left(1+\frac{S_h^2}{4\pi^2}\right)|v_h|_{H^1(\Gamma^h)}^2
 =\left(1+\frac{S_h^2}{4\pi^2}\right)a_h(v_h,v_h),
\]
which is \eqref{eq:coercivity}. For~(ii), the same parametrization reduces
the dual problem to $-\phi_h''=f_h$ on $[0,S_h)$ with periodic boundary
conditions and zero mean, so the one-dimensional periodic elliptic estimate
applies. Under \Cref{ass:geom}, $S_h\to|\Gamma|$; hence both constants are
uniform in $h$ and the cut position.
\end{proof}

\begin{lemma}[Second-order Galerkin consistency]\label{lem:consistency}
Let $u\in H^2(\Gamma)$ be the mean-zero solution, and let $u^e$ denote
$u\circ p$ shifted to have zero mean on $\Gamma^h$. Under
\Cref{ass:geom,ass:data},
\begin{equation}\label{eq:consistency}
 \sup_{0\neq w_h\in W_h^{\mathrm{tr}}}
 \frac{|a_h(u^e,w_h)-\ell_h(w_h)|}{\|w_h\|_{H^1(\Gamma^h)}}
 \lesssim h^2\|u\|_{H^2(\Gamma)}.
\end{equation}
\end{lemma}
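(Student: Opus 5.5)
The plan is the standard geometric-consistency argument of TraceFEM, e.g. \citet{olshanskii2009finite} and \citet{burman2015stabilized}. For an arbitrary $w_h\in W_h^{\rm tr}$, I lift it to $\Gamma$ by $w_h^\ell:=w_h\circ p^{-1}$; this is well defined because $p:\Gamma^h\to\Gamma$ is a bijection for $h$ small under \Cref{ass:geom}. Testing the continuous problem \eqref{eq:weak-surf} with $w_h^\ell-\overline{w_h^\ell}$ gives $a_\Gamma(u,w_h^\ell)=(g,w_h^\ell)_\Gamma$, since $\int_\Gamma g\,ds=0$ lets us drop the mean. The mean shift of $u^e$ is harmless because $a_h$ annihilates constants. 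The residual therefore splits as
\[
 a_h(u^e,w_h)-\ell_h(w_h)
 =\bigl[a_h(u^e,w_h)-a_\Gamma(u,w_h^\ell)\bigr]
 +\bigl[(g,w_h^\ell)_\Gamma-\ell_h(w_h)\bigr].
\]
The two brackets are then estimated separately and independently of $w_h$ beyond its $H^1(\Gamma^h)$ norm.

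For the load term, change variables $ds=\mu_h\,ds_h$ with $\mu_h=ds/ds_h$. Then
\[
 (g,w_h^\ell)_\Gamma-\ell_h(w_h)=\int_{\Gamma^h}(\mu_h-1)g^ew_h\,ds_h .
\]
Using $\|1-\mu_h\|_{L^\infty}\lesssim h^2$ and $\|g^e\|_{L^2(\Gamma^h)}\simeq\|g\|_{L^2(\Gamma)}$, this is bounded by $h^2\|g\|_{L^2(\Gamma)}\|w_h\|_{L^2(\Gamma^h)}$. Since $g=-\Delta_\Gamma u$, we have $\|g\|_{L^2(\Gamma)}\le\|u\|_{H^2(\Gamma)}$.

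For the bilinear term, write the tangential gradients on $\Gamma^h$ via the chain rule, $\nabla_{\Gamma^h}v^e=B_h^T\nabla_\Gamma v\circ p$ with $B_h=\mathcal P_h(\mathbf I-d\mathbf H)\mathcal P$. This gives
\[
 a_h(u^e,w_h)-a_\Gamma(u,w_h^\ell)=\int_\Gamma (A_h-\mathcal P)\nabla_\Gamma u\cdot\nabla_\Gamma w_h^\ell\,ds,
\]
where $A_h=\mu_h^{-1}\mathcal P(\mathbf I-d\mathbf H)^{-1}\mathcal P_h(\mathbf I-d\mathbf H)^{-1}\mathcal P$, evaluated through $p$. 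The main obstacle is to show $\|\mathcal P(A_h-\mathcal P)\mathcal P\|_{L^\infty}\lesssim h^2$ even though $\|\mathbf n\circ p-\mathbf n_h\|\lesssim h$ is only first order. I would try the standard observation that in $\mathcal P\mathcal P_h\mathcal P=\mathcal P-(\mathcal P\mathbf n_h)(\mathcal P\mathbf n_h)^T$ the normal error enters quadratically, since $|\mathcal P\mathbf n_h|\le|\mathbf n-\mathbf n_h|\lesssim h$. The remaining factors are $d=O(h^2)$ and $\mu_h-1=O(h^2)$. In two dimensions this is even simpler: $\mathcal P$ and $\mathcal P_h$ are rank one, and $|\mathbf t\cdot\mathbf t_h|^2=1-|\mathbf n-\mathbf n_h|^2/2+\dots$ up to sign. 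Everything is applied segmentwise, since $\mathbf n_h$ is piecewise constant and the lift is smooth on each segment.

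Combining, Cauchy--Schwarz bounds the bilinear term by $h^2|u|_{H^1(\Gamma)}\,\|w_h^\ell\|_{H^1(\Gamma)}$. Norm equivalence $\|w_h^\ell\|_{H^1(\Gamma)}\simeq\|w_h\|_{H^1(\Gamma^h)}$ then follows from the same geometric bounds, with constants independent of the cut position. Dividing by $\|w_h\|_{H^1(\Gamma^h)}$ and taking the supremum yields \eqref{eq:consistency}. Note that only $u\in H^2(\Gamma)$ is used: the $H^2$ regularity enters through the bound on $g$, and no integration by parts on the polygon is needed.
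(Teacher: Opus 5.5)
Your proposal is correct and follows the paper's proof. You lift $w_h$ through $p^{-1}$, test the continuous problem with the mean-corrected lift using $\int_\Gamma g\,ds=0$, reduce both defects to $O(h^2)$ geometric factors, and finish with Cauchy--Schwarz, norm equivalence, and $\|g\|_{L^2(\Gamma)}\le\|u\|_{H^2(\Gamma)}$. The only difference is packaging: the paper works with the arclength derivative, where your tensor collapses to $A_h=(ds/ds_h)\,\mathcal P$, so your quadratic normal-error observation is already contained in the cited bound $\|1-ds/ds_h\|_{L^\infty(\Gamma^h)}\lesssim h^2$. One correction that does not affect the $O(h^2)$ estimate: since $w_h=w_h^\ell\circ p$, one has $\nabla_{\Gamma^h}v^e=B_h\nabla_\Gamma v\circ p$ and $A_h=\mu_h^{-1}\mathcal P(\mathbf I-d\mathbf H)\mathcal P_h(\mathbf I-d\mathbf H)\mathcal P$, without the transpose $B_h^T$ or the inverses.
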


\begin{proof}
For $w_h\in W_h^{\rm tr}$, let $w_h^\ell=w_h\circ p^{-1}$ and set
$J_h=ds/ds_h$, viewed on $\Gamma$ after pullback. Writing the forms first on
$\Gamma^h$ and then changing variables through $p$, the arclength chain rule gives
\[
 a_h(u^e,w_h)
 =\int_{\Gamma^h}\partial_{s_h}u^e\,\partial_{s_h}w_h\,ds_h
 =\int_\Gamma J_h\,\partial_su\,\partial_sw_h^\ell\,ds,
\]
and
\[
 \ell_h(w_h)=\int_{\Gamma^h}g^e w_h\,ds_h
 =\int_\Gamma J_h^{-1}g w_h^\ell\,ds.
\]
The geometric estimates imply
$\|J_h-1\|_{L^\infty(\Gamma)}+\|J_h^{-1}-1\|_{L^\infty(\Gamma)}\lesssim h^2$.
Set $\overline w:=|\Gamma|^{-1}\int_\Gamma w_h^\ell\,ds$. Testing the weak
equation with $w_h^\ell-\overline w$ and using $\int_\Gamma g\,ds=0$ gives
\[
 \int_\Gamma \partial_su\,\partial_sw_h^\ell\,ds
 =\int_\Gamma g w_h^\ell\,ds.
\]
Consequently,
\[
 a_h(u^e,w_h)-\ell_h(w_h)
 =\int_\Gamma (J_h-1)\partial_su\,\partial_sw_h^\ell\,ds
  -\int_\Gamma (J_h^{-1}-1)g w_h^\ell\,ds.
\]
Cauchy--Schwarz and the closest-point norm equivalence
$\|w_h^\ell\|_{H^1(\Gamma)}\lesssim\|w_h\|_{H^1(\Gamma^h)}$ now yield
\[
 |a_h(u^e,w_h)-\ell_h(w_h)|
 \lesssim h^2\bigl(|u|_{H^1(\Gamma)}+\|g\|_{L^2(\Gamma)}\bigr)
 \|w_h\|_{H^1(\Gamma^h)}.
\]
Finally, $|u|_{H^1(\Gamma)}\le\|u\|_{H^2(\Gamma)}$ and
$g=-\Delta_\Gamma u$ gives
$\|g\|_{L^2(\Gamma)}\le\|u\|_{H^2(\Gamma)}$. Taking the supremum proves
\eqref{eq:consistency}.
\end{proof}

\subsection{\texorpdfstring{$H^1(\Gamma)$ and $L^2(\Gamma)$ error estimates}{Error estimates}}
\label{sec:error-estimates}

\begin{theorem}[$H^1$ error estimate]\label{thm:H1-error}
Let $u$ be the mean-zero solution with $u\in H^2(\Gamma)$, and let $u_h\in W_{h,\diamond}^{\mathrm{red}}$ be the reduced CutFEM trace solution, with lift $u^e=u\circ p$ shifted by a constant so that $\int_{\Gamma^h}u^e\,ds_h=0$. Under \Cref{ass:geom,ass:extrap-stencil,ass:data}, assume that $P_2$ is invertible for the chosen layer-potential construction. Then
\begin{equation}\label{eq:H1-error}
    \|u^e-u_h\|_{H^1(\Gamma^h)}\lesssim h\bigl(\|u\|_{H^2(\Gamma)}+\|U\|_{W^{4,\infty}(\Omega\cap U_\Gamma)}\bigr).
\end{equation}
\end{theorem}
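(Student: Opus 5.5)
The argument is a Strang-type estimate on the mean-zero reduced trace space $W_{h,\diamond}^{\rm red}$. It combines the coercivity \eqref{eq:coercivity} of \Cref{lem:surface-estimates}(i), the consistency bound \eqref{eq:consistency} of \Cref{lem:consistency}, and the approximation property \eqref{eq:trace-approx} of \Cref{lem:trace-approx}.

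First, choose the best-approximation candidate. Let $w_\star$ be as in \Cref{lem:trace-approx}, and let $\widehat w_\star:=w_\star-\overline w_\star\in W_{h,\diamond}^{\rm red}$ be its mean-zero shift, which the lemma shows satisfies the same bound. Since $u^e$ is normalized to have zero mean on $\Gamma^h$, compare $u^e$ with $\widehat w_\star$. The normalization of $u^e$ differs from $u\circ p$ by a constant of size $O(h^2\mathcal N)$, because $\int_\Gamma u\,ds=0$ and the measure error is $O(h^2)$. Hence
\[
\|u^e-\widehat w_\star\|_{H^1(\Gamma^h)}\lesssim h\,\mathcal N,
\qquad
\mathcal N:=\|u\|_{H^2(\Gamma)}+\|U\|_{W^{4,\infty}(\Omega\cap U_\Gamma)}.
\]

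Second, split the error as $u^e-u_h=(u^e-\widehat w_\star)+(\widehat w_\star-u_h)$, and set $\theta_h:=\widehat w_\star-u_h\in W_{h,\diamond}^{\rm red}$. By coercivity \eqref{eq:coercivity} and the Galerkin identity \eqref{eq:reduced-galerkin},
\[
\|\theta_h\|_{H^1(\Gamma^h)}^2\lesssim a_h(\theta_h,\theta_h)
= a_h(\widehat w_\star-u^e,\theta_h)+\bigl[a_h(u^e,\theta_h)-\ell_h(\theta_h)\bigr].
\]
Bound the first term by continuity of $a_h$ and the approximation bound above. Bound the second term by \Cref{lem:consistency}, which applies because $\theta_h\in W_{h,\diamond}^{\rm red}\subset W_h^{\rm tr}$. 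This gives
\[
\|\theta_h\|_{H^1(\Gamma^h)}\lesssim h\mathcal N+h^2\|u\|_{H^2(\Gamma)}.
\]
The triangle inequality then yields \eqref{eq:H1-error}.

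The only delicate point is the constant bookkeeping. Galerkin orthogonality is not exact, so the residual must be handled entirely through \Cref{lem:consistency}. That lemma is stated for the mean-zero shifted $u^e$ and all $w_h\in W_h^{\rm tr}$; constant shifts of $u^e$ do not affect $a_h$, and $\ell_h$ is only evaluated on the discrete test function. Also, $g^e$ need not have exactly zero mean on $\Gamma^h$. Testing only with mean-zero $\theta_h$ avoids any compatibility issue, since the consistency lemma already absorbs the $J_h^{-1}-1$ term. No observability or coefficient-level injectivity is needed, because the whole argument is at trace level. All constants are independent of the cut position, since each ingredient lemma is cut-independent.
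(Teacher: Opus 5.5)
Your proposal is correct and follows the paper's proof essentially step by step. You shift the approximant of \Cref{lem:trace-approx} to mean zero, then combine coercivity, the Galerkin identity (with the residual handled entirely by \Cref{lem:consistency}), continuity and the triangle inequality. The only cosmetic difference is in bounding $\|u^e-\widehat w_\star\|_{H^1(\Gamma^h)}$: the paper notes that subtracting averages is a mean-zero projection and so cannot increase the $H^1$ norm, whereas you bound the two $O(h^2\mathcal N)$ shift constants separately; both arguments are valid.
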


\begin{proof}
Let $w_\star$ be the approximant of \Cref{lem:trace-approx} to $u\circ p$.
Subtract from $u\circ p$ and $w_\star$ their respective averages over
$\Gamma^h$, obtaining $u^e$ and $v_h\in W_{h,\diamond}^{\rm red}$.
Their difference is the mean-zero projection of $u\circ p-w_\star$; hence
subtracting the averages does not increase its $H^1$ norm. Therefore, with
$\mathcal N:=\|u\|_{H^2(\Gamma)}+
\|U\|_{W^{4,\infty}(\Omega\cap U_\Gamma)}$,
\[
 \|u^e-v_h\|_{H^1(\Gamma^h)}\lesssim h\mathcal N.
\]
Set $\theta_h:=v_h-u_h\in W_{h,\diamond}^{\rm red}$. Coercivity, the
Galerkin equation, continuity of $a_h$, and \Cref{lem:consistency} give
\begin{align*}
 \|\theta_h\|_{H^1(\Gamma^h)}^2
 &\lesssim a_h(\theta_h,\theta_h)\\
 &=a_h(v_h-u^e,\theta_h)
   +a_h(u^e,\theta_h)-\ell_h(\theta_h)\\
 &\lesssim\bigl(\|u^e-v_h\|_{H^1(\Gamma^h)}
       +h^2\|u\|_{H^2(\Gamma)}\bigr)
       \|\theta_h\|_{H^1(\Gamma^h)}.
\end{align*}
Dividing if $\theta_h\ne0$ and using the triangle inequality yields
\[
 \|u^e-u_h\|_{H^1(\Gamma^h)}
 \lesssim \|u^e-v_h\|_{H^1(\Gamma^h)}
          +h^2\|u\|_{H^2(\Gamma)}
 \lesssim h\mathcal N,
\]
which proves \eqref{eq:H1-error}.
\end{proof}

\begin{assumption}[Reduced-space approximation property]\label{ass:reduced-approx}
For every mean-zero $\phi\in H^2(\Gamma^h)$, where $H^2(\Gamma^h)$ is defined through the periodic arclength parametrization, there exists $\phi_\star\in W_{h,\diamond}^{\rm red}$ such that
\begin{equation}\label{eq:dual-approx}
   \|\phi-\phi_\star\|_{H^1(\Gamma^h)}\lesssim h\,\|\phi\|_{H^2(\Gamma^h)}.
\end{equation}
The hidden constant is independent of $h$ and the cut position. The
flat periodic model supports this scaling, but a cut-uniform proof for general
curved interfaces is not currently available; \eqref{eq:dual-approx} is
therefore an explicit assumption.
\end{assumption}

\begin{theorem}[$L^2$ error estimate]\label{thm:L2-error}
Under the hypotheses of \Cref{thm:H1-error} and \Cref{ass:reduced-approx},
\begin{equation}\label{eq:L2-error}
    \|u^e-u_h\|_{L^2(\Gamma^h)}\lesssim h^2\bigl(\|u\|_{H^2(\Gamma)}+\|U\|_{W^{4,\infty}(\Omega\cap U_\Gamma)}\bigr).
\end{equation}
\end{theorem}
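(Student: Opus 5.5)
The proof will be an Aubin--Nitsche duality argument posed on the polygonal curve $\Gamma^h$. Set $e_h:=u^e-u_h$, which has zero mean on $\Gamma^h$ because both $u^e$ and $u_h$ do. Take $f_h:=e_h$ and let $\phi_h\in H^1_\diamond(\Gamma^h)$ solve the dual problem of \Cref{lem:surface-estimates}(ii). That lemma gives $\|\phi_h\|_{H^2(\Gamma^h)}\lesssim\|e_h\|_{L^2(\Gamma^h)}$ uniformly in $h$ and the cut. Testing with $z=e_h$ yields
\[
 \|e_h\|_{L^2(\Gamma^h)}^2=a_h(e_h,\phi_h).
\]

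The second step uses \Cref{ass:reduced-approx} to pick $\phi_\star\in W_{h,\diamond}^{\rm red}$ with $\|\phi_h-\phi_\star\|_{H^1(\Gamma^h)}\lesssim h\|e_h\|_{L^2(\Gamma^h)}$. The Galerkin equation \eqref{eq:reduced-galerkin} tested with $\phi_\star$ lets us write
\[
 a_h(e_h,\phi_h)=a_h(e_h,\phi_h-\phi_\star)+\bigl(a_h(u^e,\phi_\star)-\ell_h(\phi_\star)\bigr).
\]
The first term is bounded by continuity of $a_h$, \Cref{thm:H1-error}, and the approximation estimate, giving $h\mathcal N\cdot h\|e_h\|_{L^2}$ with $\mathcal N$ as in the proof of \Cref{thm:H1-error}. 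The second term is bounded by \Cref{lem:consistency}, which applies since $\phi_\star\in W_h^{\rm tr}$, giving $h^2\|u\|_{H^2(\Gamma)}\|\phi_\star\|_{H^1(\Gamma^h)}$. By the triangle inequality, $\|\phi_\star\|_{H^1}\lesssim\|\phi_h\|_{H^2}\lesssim\|e_h\|_{L^2}$. Dividing by $\|e_h\|_{L^2}$ then yields \eqref{eq:L2-error}.

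Two bookkeeping points need care. First, the consistency functional must be evaluated at the mean-zero shift of $u\circ p$. A constant shift of $u^e$ leaves $a_h(u^e,\cdot)$ unchanged, so \Cref{lem:consistency} and \Cref{thm:H1-error} refer to the same $u^e$. Second, $\phi_h$ is an exact solution on the polygon, so no geometric error enters the dual problem. All geometric consistency is already absorbed into \Cref{lem:consistency}, which is uniform over $W_h^{\rm tr}$.

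The main obstacle is the dual approximation step, namely obtaining $\phi_\star$ in the constrained space $W_{h,\diamond}^{\rm red}$ with $O(h)$ $H^1$ accuracy for a general $H^2(\Gamma^h)$ function. \Cref{lem:trace-approx} does not cover this case, because it needs a $W^{4,\infty}$ harmonic lift, which $\phi_h$ lacks: $\phi_h$ is only piecewise smooth on the polygon. This is exactly why the statement includes \Cref{ass:reduced-approx}. Given that assumption the argument is routine, and all constants remain independent of the cut position.
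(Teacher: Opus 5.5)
Your proposal is correct and follows the paper's proof essentially step for step. Like the paper, you pose the dual problem on $\Gamma^h$ with data $e_h$ and use the uniform regularity of \Cref{lem:surface-estimates}(ii), split via $\phi_\star$ from \Cref{ass:reduced-approx}, bound the first term with \Cref{thm:H1-error} and the second with the Galerkin equation plus \Cref{lem:consistency}, and control $\|\phi_\star\|_{H^1(\Gamma^h)}$ by the triangle inequality.
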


\begin{proof}
Set $e_h:=u^e-u_h$. Since both $u^e$ and $u_h$ have zero mean on $\Gamma^h$, the dual problem with right-hand side $e_h$ is compatible: find $\phi\in H^1_{\diamond}(\Gamma^h)$ such that
\begin{equation}\label{eq:dual}
   a_h(z,\phi)=(e_h,z)_{\Gamma^h},\quad\forall z\in H^1_{\diamond}(\Gamma^h).
\end{equation}
By \Cref{lem:surface-estimates}(ii),
\begin{equation}\label{eq:dual-reg}
   \|\phi\|_{H^2(\Gamma^h)}\lesssim \|e_h\|_{L^2(\Gamma^h)}.
\end{equation}
\Cref{ass:reduced-approx} supplies an approximant $\phi_\star\in W_{h,\diamond}^{\rm red}$ with
\begin{equation}\label{eq:phi-approx}
   \|\phi-\phi_\star\|_{H^1(\Gamma^h)} \lesssim h\|\phi\|_{H^2(\Gamma^h)}.
\end{equation}
Then
\begin{align}
   \|e_h\|_{L^2(\Gamma^h)}^2
   &=a_h(e_h,\phi) \nonumber\\
   &=a_h(e_h,\phi-\phi_\star)+a_h(e_h,\phi_\star). \label{eq:L2-decomp}
\end{align}
For the first term, continuity, \Cref{thm:H1-error},
\eqref{eq:phi-approx}, and \eqref{eq:dual-reg} give
\[
   |a_h(e_h,\phi-\phi_\star)| \lesssim \|e_h\|_{H^1(\Gamma^h)} \|\phi-\phi_\star\|_{H^1(\Gamma^h)}
   \lesssim h^2\bigl(\|u\|_{H^2(\Gamma)}
   +\|U\|_{W^{4,\infty}(\Omega\cap U_\Gamma)}\bigr)
   \|e_h\|_{L^2(\Gamma^h)}.
\]
For the second term, the discrete Galerkin equation gives
\[
   a_h(e_h,\phi_\star)=a_h(u^e,\phi_\star)-\ell_h(\phi_\star),
\]
and \Cref{lem:consistency} gives
\[
   |a_h(e_h,\phi_\star)|\lesssim h^2\|u\|_{H^2(\Gamma)}\|\phi_\star\|_{H^1(\Gamma^h)}.
\]
Moreover,
\[
   \|\phi_\star\|_{H^1(\Gamma^h)}
   \le \|\phi\|_{H^1(\Gamma^h)}+
       \|\phi-\phi_\star\|_{H^1(\Gamma^h)}
   \lesssim \|\phi\|_{H^2(\Gamma^h)}
   \lesssim \|e_h\|_{L^2(\Gamma^h)}.
\]
Combining these bounds with \eqref{eq:L2-decomp} and dividing by $\|e_h\|_{L^2(\Gamma^h)}$ proves \eqref{eq:L2-error}.
\end{proof}

\section{Trace stability, bulk reconstruction, and conditioning}\label{sec:conditioning-section}
\subsection{One-sided trace stability}

Recall that $\Omega_h^\Gamma$ from \Cref{sec:cutfem} is the active band, an $O(h)$-neighbourhood of $\Gamma^h$.

\begin{lemma}[One-sided harmonic trace estimate: flat periodic model]
\label{lem:flat-harmonic-trace}
Let $\widehat u_h:=\mathcal E_hv_2\in V_h^{\rm red}$ and
$w_h:=\operatorname{Tr}_h\widehat u_h\in W_h^{\rm red}$. Consider the flat
grid-aligned model with periodic tangential indexing, a fixed number of active
normal layers, and the extrapolation stability of \Cref{prop:extrap-linear}(i).
Then
\[
        \|\widehat u_h\|_{L^2(\Omega_h^\Gamma)}^2
        \le
        C_{\rm tr} h\|w_h\|_{L^2(\Gamma^h)}^2 ,
\]
where $C_{\rm tr}$ is independent of $h$ and of the cut position.
\end{lemma}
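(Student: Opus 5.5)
The estimate will follow from a chain of three comparisons: the band $L^2$ norm of $\widehat u_h$ is bounded by $h^2$ times the $\ell^2$ norm of its nodal coefficients; those coefficients are controlled by the master block $v_2$; and $v_2$ is controlled by the trace $w_h$ through a one-dimensional Fourier analysis in the tangential direction. The first two comparisons are routine, and the third is where the flat periodic structure is used.

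\emph{Step 1: coefficient bound.} Use the nodal mass equivalence for $\mathcal Q_1$ on full cells. Since every active cell is a full Cartesian square,
\[
 \|\widehat u_h\|_{L^2(\Omega_h^\Gamma)}^2\simeq h^2\sum_{\xi\in\gamma}|(Ev_2)_\xi|^2 = h\bigl(\|Hv_2\|_{\ell^2_h(\gamma_1)}^2+\|v_2\|_{\ell^2_h(\gamma_2)}^2+\|Jv_2\|_{\ell^2_h(\gamma_3)}^2\bigr).
\]
\Cref{lem:H-stab} and \Cref{prop:extrap-linear}(i) then give
\[
 \|\widehat u_h\|_{L^2(\Omega_h^\Gamma)}^2\lesssim h\|v_2\|_{\ell^2_h(\gamma_2)}^2 .
\]
It therefore suffices to prove the observability bound
\[
 \|v_2\|_{\ell^2_h(\gamma_2)}\lesssim\|w_h\|_{L^2(\Gamma^h)} .
\]

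\emph{Step 2: Fourier reduction.} In the flat model, $\Gamma^h$ is the line $y=\theta h$ with $\theta\in(0,1)$ the cut offset. The layers are: $\gamma_1$ at row $0$, $\gamma_2$ at row $1$, and $\gamma_3$ at the remaining active rows above $\Gamma$, which are filled by extrapolation. All operators are translation invariant in the tangential index, so a discrete Fourier transform in the tangential index $j$ diagonalizes them with frequency $k$ in $[-\pi,\pi)$.

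The discrete harmonic extension acts on each mode as multiplication by a symbol $\lambda(k)$. This is the decaying root of $2\cosh\kappa-2+2-2\cos k=2$, that is, the factor $\rho(k)=e^{-\kappa(k)}\in(0,1]$ relating consecutive rows, with $\rho(0)=1$ and $\rho$ bounded away from $0$, since $\rho(\pi)=3-2\sqrt2$. The trace on the segment $y=\theta h$ between rows $0$ and $1$ is the bilinear interpolant in $y$ combined with linear interpolation in $x$. For the mode $e^{ijk}$ it therefore has coefficient
\[
 m(k,\theta)=\bigl((1-\theta)\rho(k)^{-1}\cdot\text{(row }0\text{ factor)}+\theta\bigr)\cdot a(k),
\]
where the row $0$ factor is $\rho(k)$ and $a(k)$ is the symbol of piecewise linear interpolation. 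Only rows $0$ and $1$ enter, because the cut cell lies between them; $\gamma_3$ does not affect the trace in the flat model.

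Parseval, together with the $L^2$ norm of a piecewise linear function on the line with nodal values $c_j$, which is comparable to $h\sum_j|c_j|^2$, reduces the observability bound to a uniform lower bound
\[
 |(1-\theta)\rho(k)+\theta|\ \ge c>0\quad\text{for all }k,\theta .
\]
This is immediate: it is a convex combination of $\rho(k)\ge3-2\sqrt2$ and $1$. The key point is that the harmonic constraint ties row $0$ to row $1$ with a positive, bounded-below symbol. The weakly observed small-cut modes, namely arbitrary values at row $0$ when $\theta\to1$ or at row $1$ when $\theta\to0$, are therefore excluded.

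\emph{Main obstacle.} The hardest step is making Step 2 rigorous and cut uniform. Three things need checking. First, the exact layer geometry: row $0$ must be identified as $\gamma_1$ and row $1$ as $\gamma_2$. Second, the harmonic extension in the flat periodic band must be the decaying-mode multiplier, which should follow from the Dirichlet interpretation of \Cref{lem:H-Dirichlet} with decay into the interior half-space. Third, the interpolation symbol must be controlled. For the $x$-linear interpolation, the $L^2$ norm on a line is bounded below by a constant times $h\sum_j|c_j|^2$ via the mass-matrix eigenvalues, which lie in $[h/3,h]$. The lower bound on the multiplier then combines with this mass-matrix bound. The constants are independent of $\theta$ and $h$, which finishes the proof.
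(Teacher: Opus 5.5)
Your proposal is correct and follows the paper's proof essentially step for step. The shared ingredients are: tangential Fourier diagonalization with the decaying layer factor $e^{-\alpha(\theta)}$, $\cosh\alpha=2-\cos\theta$; a trace multiplier that is a convex combination of $e^{-\alpha}\ge 3-2\sqrt2$ and $1$, hence uniformly bounded below; one-dimensional mass equivalence with Parseval; and full-cell $\mathcal Q_1$ mass equivalence. There are two minor differences. The paper bounds the harmonic layers directly by $e^{-j\alpha}\le1$ instead of invoking \Cref{lem:H-stab}. It also lets the cut sit at a bounded depth $d\le D$, which only adds a factor $e^{-d\alpha}$ to the multiplier.

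The only blemishes are typographical. Your displayed characteristic equation and your displayed formula for $m(k,\theta)$ are garbled; they should read $\cosh\kappa(k)=2-\cos k$ and $m(k,\theta)=(1-\theta)\rho(k)+\theta$. The values $\rho(0)=1$, $\rho(\pi)=3-2\sqrt2$ and the lower bound you actually use are the right ones.
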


\begin{proof}
Let $\gamma_2=\{(mh,0):m\in\mathbb Z_M\}$, where
$\mathbb Z_M:=\mathbb Z/M\mathbb Z$ is the periodic tangential index set,
and let $j\ge0$ count layers from $\gamma_2$ into the harmonic side. Write
$V_m=(v_2)_m$ and $U_{j,m}=(Ev_2)_{j,m}$. The discrete Fourier transform in
$m$ gives
\[
 \widehat U_j(\theta)=e^{-j\alpha(\theta)}\widehat V(\theta),
 \qquad \cosh\alpha(\theta)=2-\cos\theta,
\]
where $0\le\alpha(\theta)\le\operatorname{arcosh}(3)$. Let the cut line have
height $(d+\rho)h$, with $0\le d\le D$, $0\le\rho\le1$, and fixed $D$.
At the tangential grid points its trace values are
$t_m=(1-\rho)U_{d,m}+\rho U_{d+1,m}$, so
\[
 \widehat t(\theta)=m_{d,\rho}(\theta)\widehat V(\theta),
 \qquad
 m_{d,\rho}(\theta)
 =e^{-d\alpha(\theta)}\bigl(1-\rho+\rho e^{-\alpha(\theta)}\bigr).
\]
All factors are positive, and hence
\[
 m_{d,\rho}(\theta)
 \ge e^{-(D+1)\operatorname{arcosh}(3)}=:c_D>0.
\]
The one-dimensional finite element mass equivalence on the trace, Parseval's
identity, and this lower bound give
\[
 \|v_2\|_{\ell_h^2(\gamma_2)}^2
 =h\sum_m|V_m|^2
 \lesssim h\sum_m|t_m|^2
 \simeq\|w_h\|_{L^2(\Gamma^h)}^2.
\]
Moreover, $e^{-j\alpha(\theta)}\le1$, so every harmonic active layer has
$\ell_h^2$ norm bounded by that of $v_2$. The extrapolated $\gamma_3$ block is
bounded by \Cref{prop:extrap-linear}(i). Since the number of active layers is
fixed,
\[
 \|Ev_2\|_{\ell_h^2(\gamma)}^2
 \lesssim\|w_h\|_{L^2(\Gamma^h)}^2.
\]
Finally, full-cell $\mathcal Q_1$ mass equivalence yields
\[
 \|\widehat u_h\|_{L^2(\Omega_h^\Gamma)}^2
 \simeq h^2\sum_{\xi\in\gamma}|(Ev_2)_\xi|^2
 =h\|Ev_2\|_{\ell_h^2(\gamma)}^2
 \lesssim h\|w_h\|_{L^2(\Gamma^h)}^2.
\]
\end{proof}

\begin{assumption}[Curved-interface one-sided harmonic trace estimate]\label{ass:curved-trace}
For a smooth closed interface $\Gamma$, the reduced active reconstruction $\widehat u_h:=\mathcal E_hv_2$ and its trace $w_h:=\operatorname{Tr}_h\widehat u_h$ satisfy
\[
        \|\widehat u_h\|_{L^2(\Omega_h^\Gamma)}^2
        \le
        C_{\rm tr} h\|w_h\|_{L^2(\Gamma^h)}^2 ,
\]
with $C_{\rm tr}$ independent of $h$ and of the cut position. In particular, the bound holds if the curved reduced trace map is a sufficiently small perturbation, in the $L^2(\Gamma^h)$-operator norm, of the corresponding frozen flat trace map on a finitely overlapping patch covering; a proof of this localization is not claimed here. Direct numerical measurements are reported in \Cref{sec:numerics-observability}.
\end{assumption}

\begin{lemma}[Boundary-layer observability]
\label{lem:observability}
Let $\widehat u_h:=\mathcal E_hv_2\in V_h^{\rm red}$ and $w_h:=\operatorname{Tr}_h\widehat u_h$. Assume either the flat periodic setting of \Cref{lem:flat-harmonic-trace} or the curved-interface \Cref{ass:curved-trace}. Then there exists a constant $C > 0$, independent of $h$ and of the cut position, such that
\[
        \|v_2\|_{\ell_h^2(\gamma_2)} \le C\|w_h\|_{L^2(\Gamma^h)} .
\]
In particular, $\operatorname{Tr}_h\mathcal E_h:\mathcal M_h\to W_h^{\rm red}$
is an isomorphism.
\end{lemma}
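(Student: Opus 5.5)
The plan is to deduce the observability bound directly from the one-sided trace estimate, since the block structure of $E$ lets the $\gamma_2$ coefficients be read off from the active representative. In the flat periodic setting, \Cref{lem:flat-harmonic-trace} gives $\|\widehat u_h\|_{L^2(\Omega_h^\Gamma)}^2\le C_{\rm tr}h\|w_h\|_{L^2(\Gamma^h)}^2$. In the curved setting, \Cref{ass:curved-trace} gives the same bound. From that point on, the two cases are treated identically.

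The key step is a lower bound for the band mass in terms of the $\gamma_2$ block. By full-cell $\mathcal Q_1$ mass equivalence on the active mesh (uniform Cartesian cells, so the constants are independent of the cut), we have
\[
 \|\widehat u_h\|_{L^2(\Omega_h^\Gamma)}^2\simeq h^2\sum_{\xi\in\gamma}|(Ev_2)_\xi|^2
 = h\,\|Ev_2\|_{\ell_h^2(\gamma)}^2 .
\]
Each vertex of $\gamma_2$ belongs to an active cell, so its coefficient is counted in this sum. Since the $\gamma_2$ block of $E$ is the identity, $\|v_2\|_{\ell_h^2(\gamma_2)}^2\le\|Ev_2\|_{\ell_h^2(\gamma)}^2$. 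Combining gives
\[
 h\|v_2\|_{\ell_h^2(\gamma_2)}^2\lesssim\|\widehat u_h\|_{L^2(\Omega_h^\Gamma)}^2\le C_{\rm tr}h\|w_h\|_{L^2(\Gamma^h)}^2 ,
\]
and dividing by $h$ yields the claim. The constant depends only on $C_{\rm tr}$ and the reference-cell mass-matrix eigenvalues. This lower mass equivalence is local: the $\mathcal Q_1$ mass matrix on a full square is spectrally equivalent to $h^2 I$. There is no small-cut issue, because the integration is over whole cells, not cut pieces.

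For the isomorphism statement, $\operatorname{Tr}_h\mathcal E_h$ is surjective onto $W_h^{\rm red}$ by definition. The bound shows that $w_h=0$ forces $v_2=0$, so the map is also injective, and its inverse is bounded uniformly in $h$ and the cut position. I do not expect any step to be a real obstacle, since the hard content lies in \Cref{lem:flat-harmonic-trace} and \Cref{ass:curved-trace}. The only point needing care is that the mass equivalence is applied in the lower direction on full active cells, which justifies the $h$ power bookkeeping. In the flat case one could alternatively quote the intermediate inequality $\|v_2\|^2_{\ell_h^2}\lesssim\|w_h\|^2_{L^2}$ already derived inside the proof of \Cref{lem:flat-harmonic-trace}.
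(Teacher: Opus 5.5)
Your proposal is correct and follows the paper's proof essentially verbatim. Full-cell $\mathcal Q_1$ mass equivalence identifies $\|\widehat u_h\|_{L^2(\Omega_h^\Gamma)}^2$ with $h\|Ev_2\|_{\ell_h^2(\gamma)}^2$, the identity $\gamma_2$ block of $E$ bounds $\|v_2\|_{\ell_h^2(\gamma_2)}$ by this quantity, and the one-sided trace estimate (\Cref{lem:flat-harmonic-trace} or \Cref{ass:curved-trace}) closes the chain after cancelling $h$; the isomorphism follows, as in the paper, from injectivity via the bound and surjectivity by definition of $W_h^{\rm red}$.
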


\begin{proof}
Full-cell $\mathcal Q_1$ mass equivalence gives
\[
 \|\widehat u_h\|_{L^2(\Omega_h^\Gamma)}^2
 \simeq h^2\sum_{\xi\in\gamma}|(Ev_2)_\xi|^2
 =h\|Ev_2\|_{\ell_h^2(\gamma)}^2.
\]
Since $Ev_2=v_2$ on $\gamma_2\subset\gamma$, this equivalence and
\Cref{lem:flat-harmonic-trace} or \Cref{ass:curved-trace} yield
\[
 h\|v_2\|_{\ell_h^2(\gamma_2)}^2
 \le h\|Ev_2\|_{\ell_h^2(\gamma)}^2
 \lesssim\|\widehat u_h\|_{L^2(\Omega_h^\Gamma)}^2
 \lesssim h\|w_h\|_{L^2(\Gamma^h)}^2.
\]
Cancelling $h$ and taking square roots proves the estimate. If $w_h=0$,
then $v_2=0$, so the map is injective; it is onto $W_h^{\rm red}$ by
definition.
\end{proof}

\subsection{Bulk reconstruction consequence}
\label{sec:bulk-error}

\begin{assumption}[Quantitative bulk reconstruction]\label{ass:bulk-reconstruction}
Let $\Lambda_h:\mathbb R^{|\gamma_2|}\to\mathbb R^{h\mathbb Z^2}$ map a layer density to its full lattice potential, and let $I_h\Lambda_hq$ denote its bilinear interpolation restricted to $\Omega$. If $q_\star$ represents the exact harmonic boundary data, $P_2q_\star=\widetilde U^h_{\gamma_2}$, assume
\[
\|U-I_h\Lambda_hq_\star\|_{L^2(\Omega)}
\lesssim h^2\|U\|_{W^{4,\infty}(\Omega\cap U_\Gamma)}
\]
and, for every density $r$,
\[
\|I_h\Lambda_hr\|_{L^2(\Omega)}
\lesssim\|P_2r\|_{\ell_h^2(\gamma_2)}.
\]
These are the consistency and Dirichlet-data stability estimates imported from the companion LGF analysis~\citep{xia2026geometrically}.
\end{assumption}

\begin{corollary}[Bulk reconstruction error]\label{cor:bulk-error}
Let $U$ be the exact harmonic bulk solution of \eqref{eq:coupled-bulk}.
Assume that $P_2$ is invertible, and let $q$ solve \eqref{eq:reduced-density}
with the gauge
\[
 \int_{\Gamma^h}\operatorname{Tr}_h\mathcal E_h(P_2q)\,ds_h=0.
\]
Define $U_h^{\mathrm{lat}}:=\Lambda_hq$ and $U_h:=I_hU_h^{\mathrm{lat}}$.
Under \Cref{ass:geom,ass:extrap-stencil,ass:data,ass:bulk-reconstruction,ass:reduced-approx},
and either the flat periodic setting of \Cref{lem:flat-harmonic-trace} or
\Cref{ass:curved-trace},
\[
 \|U-U_h\|_{L^2(\Omega)}\lesssim h^2\bigl(\|u\|_{H^2(\Gamma)}+\|U\|_{W^{4,\infty}(\Omega\cap U_\Gamma)}\bigr),
\]
with constants independent of the cut configuration.
\end{corollary}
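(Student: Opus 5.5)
The plan is to split the bulk error through the exact-data density $q_\star$ of \Cref{ass:bulk-reconstruction}, defined by $P_2q_\star=\widetilde U^h_{\gamma_2}$, possibly shifted by a constant. By linearity of $\Lambda_h$ and $I_h$,
\[
 U-U_h=\bigl(U-I_h\Lambda_hq_\star\bigr)+I_h\Lambda_h(q_\star-q).
\]
The first term is $O(h^2\|U\|_{W^{4,\infty}(\Omega\cap U_\Gamma)})$ by the consistency half of \Cref{ass:bulk-reconstruction}. The stability half bounds the second term by $\|P_2(q_\star-q)\|_{\ell_h^2(\gamma_2)}=\|v_{\star,2}-u_2\|_{\ell_h^2(\gamma_2)}$, where $u_2:=P_2q$ and $v_{\star,2}=\widetilde U^h_{\gamma_2}$ is the master block from \Cref{lem:trace-approx}. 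By \Cref{prop:equivalence}, $u_2$ solves the direct reduced system, so its trace $u_h=\operatorname{Tr}_h\mathcal E_hu_2$ is the gauged Galerkin solution in $W_{h,\diamond}^{\rm red}$.

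The coefficient difference is then converted into a surface $L^2$ difference using \Cref{lem:observability}, which holds in either the flat periodic setting or under \Cref{ass:curved-trace}:
\[
 \|v_{\star,2}-u_2\|_{\ell_h^2(\gamma_2)}\lesssim\|\operatorname{Tr}_h\mathcal E_h(v_{\star,2}-u_2)\|_{L^2(\Gamma^h)}=\|w_\star-u_h\|_{L^2(\Gamma^h)}.
\]
The triangle inequality gives $\|w_\star-u_h\|\le\|w_\star-u^e\|+\|u^e-u_h\|$. The first piece is $O(h^2\mathcal N)$ by \eqref{eq:trace-approx}, and the second is $O(h^2\mathcal N)$ by \Cref{thm:L2-error}; this is where \Cref{ass:reduced-approx} enters.

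The gauge mismatch needs care. $u_h$ has zero mean on $\Gamma^h$, while $w_\star$ and $u^e=u\circ p$ have means of size $O(h^2\mathcal N)$ (shown at the end of \Cref{lem:trace-approx}). Comparing $u_h$ with the shifted $u^e$ used in \Cref{thm:L2-error} therefore costs only $O(h^2\mathcal N)$. No change to $q_\star$ is needed. If one insists on matching gauges, one can replace $v_{\star,2}$ by $v_{\star,2}-\overline w_\star\mathbf1$: by constant reproduction \eqref{eq:constant-mode} its trace is $w_\star-\overline w_\star$, and by harmonicity of constants its lattice potential changes by the constant $\overline w_\star$. That constant has $L^2(\Omega)$ norm $O(h^2\mathcal N)$, so it is absorbed.

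The step I expect to be the main obstacle is ensuring that the density-level stability estimate is applied to exactly the difference that observability controls. This requires the lattice potential $\Lambda_h q$ restricted to the active nodes to coincide with $Fq=\mathcal E_h(P_2q)$ on $\gamma_1\cup\gamma_2$. That holds by the definitions of $P_1$ and $P_2$ and \Cref{lem:H-Dirichlet}. On $\gamma_3$ the two may differ, but this is harmless: the bulk reconstruction only uses $\Lambda_h$ on $\Omega$, whose relevant nodes lie in $\mathcal V_h^{\rm int}\cup\gamma_2$. With this identification, all constants are inherited from cut-independent bounds, and the estimate follows with $\mathcal N=\|u\|_{H^2(\Gamma)}+\|U\|_{W^{4,\infty}(\Omega\cap U_\Gamma)}$.
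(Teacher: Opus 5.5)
Your proposal is correct and follows the paper's proof essentially step for step: the same splitting through $q_\star$, the stability half of \Cref{ass:bulk-reconstruction} applied to $r=q_\star-q$, observability applied to $v_2=P_2(q_\star-q)$, and the triangle inequality through $u\circ p$, with the $O(h^2)$ mean of $u\circ p$ on $\Gamma^h$ absorbing the gauge mismatch. Your final paragraph is unnecessary, because the stability estimate is already stated in terms of $\|P_2r\|_{\ell_h^2(\gamma_2)}$, so no identification of $\Lambda_hq$ with $Fq$ is needed; its claim that $I_h\Lambda_h$ on $\Omega$ uses no $\gamma_3$ nodes is also not quite right, since a cut cell can have a $\gamma_3$ corner diagonally opposite its interior vertex, but nothing in the argument depends on it.
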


\begin{proof}
Set $w_h:=\operatorname{Tr}_h\mathcal E_h(P_2q)$. By
\Cref{prop:equivalence} and the imposed gauge, $w_h$ is the mean-zero reduced
trace solution. Let $q_\star$ be as in \Cref{ass:bulk-reconstruction}. Then
\[
 U-U_h=\bigl(U-I_h\Lambda_hq_\star\bigr)+I_h\Lambda_h(q_\star-q).
\]
The first term is controlled by \Cref{ass:bulk-reconstruction}. For the
approximant of \Cref{lem:trace-approx},
$v_{\star,2}=\widetilde U^h_{\gamma_2}=P_2q_\star$; hence
\Cref{prop:equivalence,lem:observability} give
\[
 w_\star-w_h
 =\operatorname{Tr}_h\mathcal E_h\bigl(P_2(q_\star-q)\bigr),
 \qquad
 \|P_2(q_\star-q)\|_{\ell_h^2(\gamma_2)}
 \lesssim\|w_\star-w_h\|_{L^2(\Gamma^h)}.
\]
Let
\[
 c_h:=|\Gamma^h|^{-1}\int_{\Gamma^h}u\circ p\,ds_h,
 \qquad u^{e,0}:=u\circ p-c_h.
\]
Since $u$ has zero mean on $\Gamma$, geometric measure consistency gives
$|c_h|\lesssim h^2\|u\|_{H^2(\Gamma)}$. Therefore
\Cref{lem:trace-approx,thm:L2-error} and the triangle inequality yield
\[
 \|w_\star-w_h\|_{L^2(\Gamma^h)}
 \le \|w_\star-u\circ p\|_{L^2(\Gamma^h)}
 +|\Gamma^h|^{1/2}|c_h|
 +\|u^{e,0}-w_h\|_{L^2(\Gamma^h)}
 \lesssim h^2\bigl(\|u\|_{H^2(\Gamma)}
 +\|U\|_{W^{4,\infty}(\Omega\cap U_\Gamma)}\bigr).
\]
The stability estimate in \Cref{ass:bulk-reconstruction} controls
$I_h\Lambda_h(q_\star-q)$ by the same quantity and completes the proof.
\end{proof}

\begin{remark}[Inhomogeneous bulk reconstruction]
For $-\Delta U=f$, \eqref{eq:w2-inhomogeneous} and
\eqref{eq:reduced-density-inhomog} define the algebraic method. Extending the
corollary also requires a stable second-order lattice approximation of a smooth
particular solution and a Strang estimate for the surface traces generated by
the affine coefficient set $u_\gamma^p+\operatorname{range}(F)$; these estimates
are not established here.
\end{remark}

\subsection{Cut-independent conditioning of \texorpdfstring{$K_{\mathrm{red}}$}{Kred}}
\label{sec:conditioning}

Observability upgrades the surface trace coercivity to an exact coefficient-space nullspace statement and supplies the lower Rayleigh bound; the upper bound follows from reconstruction stability.

\begin{proposition}[Nullspace and discrete gauge]\label{prop:gauge}
Under the hypotheses of \Cref{lem:observability}, assume in addition that
$P_2$ is invertible and that $\Gamma^h$ is connected. Then
\[
 \ker K_{\rm red}=\operatorname{span}\{\mathbf1_{\gamma_2}\}.
\]
Consequently, the reduced system is nonsingular under the mean-zero
constraint \eqref{eq:discrete-gauge}. Equivalently, if
\[
 m_i:=\int_{\Gamma^h}\operatorname{Tr}_h(\mathcal E_he_i)\,ds_h,
 \qquad
 K_{\rm red}^{\diamond}:=K_{\rm red}+\alpha mm^T,
 \qquad \alpha>0,
\]
then $K_{\rm red}^{\diamond}$ is symmetric positive definite, and for every
compatible load $r^\diamond\perp\mathbf1$ its solution satisfies $m^Tu_2=0$
and agrees with the constrained solution.
\end{proposition}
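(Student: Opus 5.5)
The plan is to identify the quadratic form of $K_{\rm red}$ with the surface energy of the reconstructed trace, and then use observability to pass from the trace to the coefficient vector. For $v_2\in\mathcal M_h$ put $w_h:=\operatorname{Tr}_h\mathcal E_hv_2$. By \eqref{eq:local-stiff} and the identity $\nabla_{\Gamma^h}w_h=\mathcal P_h\nabla(\mathcal E_hv_2)$ noted in \Cref{sec:coercivity},
\[
 v_2^TK_{\rm red}v_2=(Ev_2)^TK(Ev_2)=a_h(w_h,w_h)=|w_h|_{H^1(\Gamma^h)}^2 .
\]
The inclusion $\operatorname{span}\{\mathbf1_{\gamma_2}\}\subset\ker K_{\rm red}$ is \eqref{eq:constant-mode}, which rests on constant reproduction from \Cref{lem:H-Dirichlet} and \Cref{prop:extrap-linear}(iii).

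For the reverse inclusion, take $v_2\in\ker K_{\rm red}$. Since $K_{\rm red}$ is symmetric positive semidefinite, $v_2^TK_{\rm red}v_2=0$, so $|w_h|_{H^1(\Gamma^h)}=0$. Because $\Gamma^h$ is connected and $w_h\in H^1(\Gamma^h)$ has zero tangential derivative on every segment, $w_h$ equals some constant $c$ on $\Gamma^h$. The constant vector satisfies $\operatorname{Tr}_h\mathcal E_h(c\mathbf1_{\gamma_2})=c$. Hence $\operatorname{Tr}_h\mathcal E_h(v_2-c\mathbf1_{\gamma_2})=0$, and \Cref{lem:observability} gives $\|v_2-c\mathbf1_{\gamma_2}\|_{\ell^2_h(\gamma_2)}\le C\cdot 0$, so $v_2=c\mathbf1_{\gamma_2}$. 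This injectivity step is the only place observability enters, and it is the substantive input; the rest is bookkeeping. Invertibility of $P_2$ is used only so that $H$, and therefore $E$, is defined.

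For the gauge statement, note that $m^Tv_2=\int_{\Gamma^h}w_h\,ds_h$ by linearity. In particular $m^T\mathbf1_{\gamma_2}=|\Gamma^h|\neq0$, so the constraint \eqref{eq:discrete-gauge} is transversal to the kernel, and $K_{\rm red}$ is nonsingular on $\{m^Tv_2=0\}$. For $K^\diamond_{\rm red}$, the form $v_2^TK^\diamond_{\rm red}v_2=|w_h|_{H^1}^2+\alpha(m^Tv_2)^2$ is nonnegative. If it vanishes, then $v_2=c\mathbf1_{\gamma_2}$ by the previous step and $c|\Gamma^h|=0$, so $v_2=0$. Thus $K^\diamond_{\rm red}$ is SPD, and it is symmetric by construction.

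Finally, fix $r^\diamond\perp\mathbf1$. Since $\mathbf1=\ker K_{\rm red}=\ker K_{\rm red}^T$, the vector $r^\diamond$ lies in the range of $K_{\rm red}$. So there is a unique constrained solution $u_2$ with $K_{\rm red}u_2=r^\diamond$ and $m^Tu_2=0$, obtained by adjusting any solution by a multiple of $\mathbf1$, which is possible because $m^T\mathbf1\ne0$. This $u_2$ also satisfies $K^\diamond_{\rm red}u_2=r^\diamond+\alpha m(m^Tu_2)=r^\diamond$. By uniqueness for the SPD system it is the solution of $K^\diamond_{\rm red}u_2=r^\diamond$, and therefore $m^Tu_2=0$.

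It is worth cross-checking this directly. Test $K^\diamond_{\rm red}u_2=r^\diamond$ with $\mathbf1$ and use $K_{\rm red}\mathbf1=0$ to get $\alpha(m^T\mathbf1)(m^Tu_2)=\mathbf1^Tr^\diamond=0$, which again gives $m^Tu_2=0$.
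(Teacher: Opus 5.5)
Your proof is correct and follows the paper's argument essentially step for step. The shared steps are the energy identity $v_2^TK_{\rm red}v_2=a_h(w_h,w_h)$, then constant reproduction together with connectedness of $\Gamma^h$ and \Cref{lem:observability} to identify the kernel, then $m^T\mathbf1_{\gamma_2}=|\Gamma^h|\neq0$ for transversality and positive definiteness, and finally testing the shifted system with $\mathbf1$ to obtain $m^Tu_2=0$. Your additional existence-and-uniqueness argument via $r^\diamond\in\operatorname{range}(K_{\rm red})$ is a harmless complement that makes explicit the existence of the constrained solution, which the paper leaves implicit.
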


\begin{proof}
For $w_h:=\operatorname{Tr}_h(\mathcal E_hv_2)$, the definition of the reduced
matrix gives
\[
 v_2^TK_{\rm red}v_2=a_h(w_h,w_h)
 =\|\nabla_{\Gamma^h}w_h\|_{L^2(\Gamma^h)}^2.
\]
The inclusion $\mathbf1_{\gamma_2}\in\ker K_{\rm red}$ follows from
\eqref{eq:constant-mode}. Conversely, zero energy makes $w_h$ constant on the
connected curve $\Gamma^h$. Thus, for some $c\in\mathbb R$, constant
reproduction gives
$\operatorname{Tr}_h\mathcal E_h(v_2-c\mathbf1_{\gamma_2})=0$, and
\Cref{lem:observability} gives $v_2=c\mathbf1_{\gamma_2}$. This proves the
kernel identity. Moreover,
$m^T\mathbf1_{\gamma_2}=|\Gamma^h|\ne0$, so the kernel has trivial
intersection with the constraint space $m^Tv_2=0$. This proves constrained
nonsingularity.

Finally,
\[
 v_2^TK_{\rm red}^{\diamond}v_2
 =v_2^TK_{\rm red}v_2+\alpha(m^Tv_2)^2.
\]
Both terms are nonnegative. If their sum vanishes, the kernel identity gives
$v_2=c\mathbf1_{\gamma_2}$ and the second term gives $c=0$; hence the shifted
matrix is positive definite. If $K_{\rm red}^{\diamond}u_2=r^\diamond$ with
$\mathbf1^Tr^\diamond=0$, left multiplication by $\mathbf1^T$ gives
$0=\alpha|\Gamma^h|\,m^Tu_2$. Thus $m^Tu_2=0$, the rank-one term vanishes,
and the shifted solution is the constrained solution.
\end{proof}

\begin{lemma}[Upper trace bound for the reconstruction]\label{lem:E-bounded}
Under \Cref{ass:geom,ass:extrap-stencil}, assume that $P_2$ is invertible for
the chosen layer-potential construction. Then, for every
$v_2\in\mathbb R^{|\gamma_2|}$,
\begin{equation}\label{eq:E-L2H1}
   \|\operatorname{Tr}_h(\mathcal E_hv_2)\|_{L^2(\Gamma^h)}\lesssim \|v_2\|_{\ell_h^2(\gamma_2)},
   \qquad
   |\operatorname{Tr}_h(\mathcal E_hv_2)|_{H^1(\Gamma^h)}\lesssim h^{-1}\|v_2\|_{\ell_h^2(\gamma_2)} .
\end{equation}
Both constants are independent of the cut configuration.
\end{lemma}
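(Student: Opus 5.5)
The plan is to pass from the coefficient vector $Ev_2$ to the active-band function $\widehat u_h:=\mathcal E_hv_2$, and then from $\widehat u_h$ to its trace on $\Gamma^h$, using only cellwise $\mathcal Q_1$ estimates. No observability or harmonic-trace assumption is needed, because only upper bounds are claimed.

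\emph{Step 1: coefficient bound.} We first bound every block of $Ev_2$ in $\ell_h^2$. The $\gamma_2$ block is $v_2$ itself. For the $\gamma_1$ block, \Cref{lem:H-stab} gives $\|Hv_2\|_{\ell_h^2(\gamma_1)}\le C_H\|v_2\|_{\ell_h^2(\gamma_2)}$. For the $\gamma_3$ block, \Cref{prop:extrap-linear}(i) gives $\|Jv_2\|_{\ell_h^2(\gamma_3)}\lesssim\|v_2\|_{\ell_h^2(\gamma_2)}$. Hence
\[
 \|Ev_2\|_{\ell_h^2(\gamma)}\lesssim\|v_2\|_{\ell_h^2(\gamma_2)},
\]
with a constant that is cut independent, since both of the cited results are.

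\emph{Step 2: cellwise trace bound.} On each active cell $K$, the restriction $\widehat u_h|_K$ is bilinear and is determined by its four vertex values $z_K$. Its trace on the segment $\Gamma_K^h$, of length at most $\sqrt2\,h$, satisfies
\[
 \|\widehat u_h\|_{L^2(\Gamma_K^h)}^2\le|\Gamma_K^h|\,\|\widehat u_h\|_{L^\infty(K)}^2\lesssim h\max_{\xi\in K}|(Ev_2)_\xi|^2 .
\]
This holds however small the cut segment is. The point is that the sup-norm of a $\mathcal Q_1$ function on the full cell is bounded by its vertex values, so no cut-dependent inverse inequality on the segment is used. Similarly, $|\nabla\widehat u_h|\lesssim h^{-1}\max_{\xi,\xi'\in K}|(Ev_2)_\xi-(Ev_2)_{\xi'}|$ on $K$. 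Together with $|\nabla_{\Gamma^h}w_h|\le|\nabla\widehat u_h|$ (the tangential projection $\mathcal P_h$ is a contraction), this gives
\[
 |w_h|_{H^1(\Gamma_K^h)}^2\lesssim h\cdot h^{-2}\sum_{\xi\in K}|(Ev_2)_\xi|^2 .
\]

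\emph{Step 3: summation.} We sum these cellwise bounds over $\mathcal T_h^\Gamma$. Each vertex belongs to at most four cells, so
\[
 \|w_h\|_{L^2(\Gamma^h)}^2\lesssim h\sum_{\xi\in\gamma}|(Ev_2)_\xi|^2=\|Ev_2\|_{\ell_h^2(\gamma)}^2 ,
\]
and the seminorm bound carries the extra factor $h^{-2}$. Combining with Step 1 proves \eqref{eq:E-L2H1}.

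The only substantive ingredient is Step 1, namely the cut-uniform stability of $H$ and $J$. That is already provided by \Cref{lem:H-stab,prop:extrap-linear}, so I expect no real obstacle. The care point is Step 2: one must use the full-cell $L^\infty$ bound rather than a segment-length-dependent trace inequality, so that degenerate cuts only make the estimate better.
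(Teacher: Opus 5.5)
Your proposal is correct and follows essentially the same route as the paper. Both arguments first bound $\|Ev_2\|_{\ell_h^2(\gamma)}$ by \Cref{lem:H-stab} and \Cref{prop:extrap-linear}(i), then pass to the trace through a cut-independent cellwise $\mathcal Q_1$ estimate summed with finite overlap. The one difference is in the cellwise step: you bound each segment integral directly by the nodal sup-norm (or nodal gradient bound) times $|\Gamma_K^h|\le\sqrt2\,h$, whereas the paper goes through the active-band norm $\|z_h\|_{L^2(\Omega_h^\Gamma)}$ using the cellwise trace inequality and a bulk inverse estimate. Your version is equivalent and slightly more elementary.
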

\begin{proof}
Set $z_h:=\mathcal E_hv_2$. By \Cref{lem:H-stab,prop:extrap-linear}(i)
and full-cell $\mathcal Q_1$ mass equivalence,
\[
 \|Ev_2\|_{\ell_h^2(\gamma)}\lesssim\|v_2\|_{\ell_h^2(\gamma_2)},
 \qquad
 \|z_h\|_{L^2(\Omega_h^\Gamma)}^2
 \simeq h\|Ev_2\|_{\ell_h^2(\gamma)}^2.
\]
Summing the cut-independent cellwise trace inequality and using the bulk
inverse estimate gives~\citep{hansbo2002unfitted,burman2015cutfem}
\[
 \|\operatorname{Tr}_hz_h\|_{L^2(\Gamma^h)}^2
 \lesssim h^{-1}\|z_h\|_{L^2(\Omega_h^\Gamma)}^2
 \lesssim\|v_2\|_{\ell_h^2(\gamma_2)}^2.
\]
Applying the same cellwise trace estimate to $\nabla z_h$, absorbing its
derivative term by an inverse estimate, yields
\[
 |\operatorname{Tr}_hz_h|_{H^1(\Gamma^h)}^2
 \lesssim h^{-1}|z_h|_{H^1(\Omega_h^\Gamma)}^2
 \lesssim h^{-3}\|z_h\|_{L^2(\Omega_h^\Gamma)}^2
 \lesssim h^{-2}\|v_2\|_{\ell_h^2(\gamma_2)}^2.
\]
Taking square roots proves both bounds, uniformly in the cut configuration.
\end{proof}

\begin{theorem}[Cut-independent conditioning]\label{thm:conditioning}
Under \Cref{ass:geom,ass:extrap-stencil}, assume that $P_2$ is invertible,
$\Gamma^h$ is connected, and either the flat periodic setting of
\Cref{lem:flat-harmonic-trace} or the curved-interface
\Cref{ass:curved-trace} holds. Then the nonzero eigenvalues of $K_{\rm red}$,
measured in the Euclidean coefficient norm, satisfy
\begin{equation}\label{eq:kappa}
 \lambda_{\min}^+(K_{\rm red})\gtrsim h,
 \qquad \lambda_{\max}(K_{\rm red})\lesssim h^{-1},
 \qquad
 \kappa^+(K_{\rm red})
 :=\frac{\lambda_{\max}(K_{\rm red})}{\lambda_{\min}^+(K_{\rm red})}
 \lesssim h^{-2}.
\end{equation}
All constants are independent of $h$ and of the cut configuration.
\end{theorem}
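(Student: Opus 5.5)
The argument works entirely with the Rayleigh quotient of $K_{\rm red}$ in the Euclidean coefficient norm, combined with the identity from the proof of \Cref{prop:gauge}:
\[
 v_2^TK_{\rm red}v_2=a_h(w_h,w_h)=|w_h|_{H^1(\Gamma^h)}^2,
 \qquad w_h:=\operatorname{Tr}_h(\mathcal E_hv_2).
\]
Throughout, I will use that the scaled norm satisfies $\|v_2\|_{\ell_h^2(\gamma_2)}^2=h|v_2|^2$. \Cref{prop:gauge} identifies the kernel as $\operatorname{span}\{\mathbf1_{\gamma_2}\}$. Since $K_{\rm red}$ is symmetric, $\lambda_{\min}^+$ is therefore the minimum of the Rayleigh quotient over $v_2\perp\mathbf1_{\gamma_2}$ in the Euclidean inner product. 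The upper bound is immediate. \Cref{lem:E-bounded} gives
\[
 v_2^TK_{\rm red}v_2\lesssim h^{-2}\|v_2\|_{\ell_h^2(\gamma_2)}^2=h^{-1}|v_2|^2,
\]
so $\lambda_{\max}\lesssim h^{-1}$ with a cut-independent constant.

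For the lower bound, take $v_2\perp\mathbf1_{\gamma_2}$ and let $\bar w:=|\Gamma^h|^{-1}\int_{\Gamma^h}w_h\,ds_h$. By constant reproduction \eqref{eq:constant-mode}, $w_h-\bar w=\operatorname{Tr}_h\mathcal E_h(v_2-\bar w\mathbf1_{\gamma_2})$. Observability (\Cref{lem:observability}) followed by the Wirtinger inequality of \Cref{lem:surface-estimates}(i) then gives
\[
 \|v_2-\bar w\mathbf1_{\gamma_2}\|_{\ell_h^2(\gamma_2)}\lesssim\|w_h-\bar w\|_{L^2(\Gamma^h)}\lesssim|w_h|_{H^1(\Gamma^h)}.
\]
Because $v_2\perp\mathbf1_{\gamma_2}$ in the Euclidean inner product, Pythagoras yields $|v_2|\le|v_2-c\mathbf1_{\gamma_2}|$ for every constant $c$, in particular for $c=\bar w$. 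Combining the two bounds,
\[
 h|v_2|^2\le h|v_2-\bar w\mathbf1|^2=\|v_2-\bar w\mathbf1\|_{\ell_h^2}^2\lesssim v_2^TK_{\rm red}v_2,
\]
which is $\lambda_{\min}^+\gtrsim h$. Dividing the two eigenvalue bounds gives $\kappa^+\lesssim h^{-2}$. All constants come from \Cref{lem:H-stab}, \Cref{prop:extrap-linear}(i), the cellwise trace and inverse estimates, the Wirtinger constant $S_h/2\pi$, and the observability constant, and each of these is cut-independent.

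The only delicate point is the lower bound, and the way it is handled above is the main thing to get right. Observability controls $v_2$ only through the $L^2$ norm of its trace, whereas the energy controls only the gradient. The gap is bridged by removing the trace mean and using the Euclidean orthogonality to constants to dominate $|v_2|$ by the distance to the span of $\mathbf1_{\gamma_2}$; this works because $\ker K_{\rm red}$ is exactly that span. In the curved case the cut-independence of the lower bound depends entirely on \Cref{ass:curved-trace} through \Cref{lem:observability}; no further argument is needed beyond invoking it.
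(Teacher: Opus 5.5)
Your proposal is correct and follows the paper's proof essentially step by step: the upper bound comes from \Cref{lem:E-bounded}, and the lower bound comes from Wirtinger plus observability applied to the trace-mean-zero shift, followed by Pythagoras in $\mathbf 1^\perp$. Your shift $v_2-\bar w\mathbf 1_{\gamma_2}$ is exactly the paper's $w=v_2-(m^Tv_2/m^T\mathbf 1)\mathbf 1$, since $m^Tv_2=\int_{\Gamma^h}w_h\,ds_h$ and, by constant reproduction, $m^T\mathbf 1=|\Gamma^h|$.
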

\begin{proof}
The kernel is exactly the constant mode by \Cref{prop:gauge}.
For a coefficient vector $v_2$, set $\widehat v_h:=\mathcal E_hv_2\in V_h^{\rm red}$ and $v_h:=\operatorname{Tr}_h\widehat v_h\in W_h^{\rm red}$. The Rayleigh quotient is
\[
   v_2^TK_{\rm red}v_2=a_h(v_h,v_h)=|v_h|_{H^1(\Gamma^h)}^2.
\]
The lower bound is first established on the discrete mean-zero subspace $\{v_2:m^Tv_2=0\}$, where $m_i=\int_{\Gamma^h}\operatorname{Tr}_h(\mathcal E_he_i)\,ds_h$. On this subspace, \Cref{lem:surface-estimates}(i) and the observability bound \Cref{lem:observability} give
\[
   v_2^TK_{\rm red}v_2 \gtrsim \|v_h\|_{L^2(\Gamma^h)}^2 \gtrsim \|v_2\|_{\ell_h^2(\gamma_2)}^2 =h\|v_2\|_{\ell^2(\gamma_2)}^2.
\]
To transfer this bound to the Euclidean orthogonal complement
$\{\mathbf1\}^\perp$, take $v_2\perp\mathbf1$ and set
$w:=v_2-(m^Tv_2/m^T\mathbf1)\mathbf1$. Then $m^Tw=0$ and
$K_{\rm red}w=K_{\rm red}v_2$. Since $v_2\perp\mathbf1$,
$\|w\|_2^2\ge\|v_2\|_2^2$, and therefore
\[
 v_2^TK_{\rm red}v_2=w^TK_{\rm red}w
 \gtrsim h\|w\|_2^2\gtrsim h\|v_2\|_2^2.
\]
This proves the lower eigenvalue bound. For the upper bound,
\Cref{lem:E-bounded} gives, for every $v_2$,
\[
   v_2^TK_{\rm red}v_2=|v_h|_{H^1(\Gamma^h)}^2
   \lesssim h^{-2}\|v_2\|_{\ell_h^2(\gamma_2)}^2
   =h^{-1}\|v_2\|_{\ell^2(\gamma_2)}^2.
\]
Combining the two bounds proves \eqref{eq:kappa}.
\end{proof}

\begin{remark}[Coefficient norm and gauge]\label{rem:conditioning-norm}
The variational gauge uses $m^\perp$, whereas the nonzero spectrum of the
symmetric matrix is characterized on $\mathbf1^\perp$; the transfer in the
proof links these two complements. The numerical section reports MATLAB
\texttt{cond} of the rank-one-shifted matrix. Its scale-matched shift adds a
gauge eigenvalue comparable to the top of the nonzero spectrum, so it has the
same $h^{-2}$ asymptotic order as \eqref{eq:kappa}, although the finite values
need not agree.
\end{remark}

\subsection{Conditioning of the density formulations: flat-interface symbol analysis}
\label{sec:density_conditioning}

The result of this subsection is a fully rigorous conditioning analysis for the density formulations on the flat periodic interface, the setting in which discrete pseudodifferential calculus is elementary. The same scalings are confirmed numerically on circular and deformed interfaces in \Cref{sec:numerics}. A rigorous extension to curved interfaces would require a discrete pseudodifferential calculus for layer operators restricted to a curved active vertex set and is left for future work.

\begin{lemma}[Discrete potential and stiffness symbols on a flat interface] \label{lem:symbols}
Let the flat periodic interface have fixed tangential period $L=Mh$, and let
$\Theta_M:=\{2\pi k/M:k\in\mathbb Z_M\}$, represented in $[-\pi,\pi)$, be
the resolved frequency set. For
$\widehat v(\theta)=\sum_{m\in\mathbb Z_M}v_me^{-im\theta}$ and every
$\theta\in\Theta_M\setminus\{0\}$, the surface stiffness and the LGF trace
operators have symbols
\begin{align}
   \widehat k_h(\theta)&=\frac4h\sin^2\frac{\theta}{2},\label{eq:k-symbol}\\
   \widehat s_h(\theta)&=\frac{1}{2\sinh\alpha(\theta)}
      \simeq \frac{1}{\alpha(\theta)},
      \qquad \cosh\alpha(\theta)=2-\cos\theta,\label{eq:s-symbol}\\
   \widehat d_h(\theta)&=-\frac{1}{1+e^{\alpha(\theta)}}\simeq 1.
      \label{eq:d-symbol}
\end{align}
Here $a\simeq b$ means $c^{-1}|b|\le|a|\le c|b|$, with $c$ independent
of $h$ and of the nonzero resolved mode.
\end{lemma}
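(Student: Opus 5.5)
The plan is to diagonalize all three operators by the discrete Fourier transform in the tangential index $m\in\mathbb Z_M$. All of them are translation invariant along the flat periodic interface, so each acts on a single mode $e^{im\theta}$, $\theta\in\Theta_M$, as multiplication by a scalar symbol.

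\emph{Stiffness symbol.} On a grid-aligned flat interface, $\Gamma^h$ is a straight line. The trace of a $\mathcal Q_1$ function along it is continuous and piecewise linear on the uniform tangential partition of size $h$, with nodal values $t_m$. The assembled form \eqref{eq:local-stiff} is therefore the one-dimensional $P_1$ stiffness matrix $h^{-1}\,\mathrm{tridiag}(-1,2,-1)$ with periodic wrap. Applying it to $e^{im\theta}$ gives
\[
 h^{-1}(2-2\cos\theta)=\tfrac4h\sin^2\tfrac\theta2,
\]
which is \eqref{eq:k-symbol}.

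\emph{Single-layer symbol.} I will Fourier transform $-L_hG_h=\delta_0$ in the tangential variable only, keeping the normal layer index $j$. The scaled source $h^{-2}$ cancels the $h^{-2}$ in $L_h$, and one obtains the three-term recurrence
\[
 (4-2\cos\theta)\widehat G_j-\widehat G_{j+1}-\widehat G_{j-1}=\delta_{j0}.
\]
For $\theta\ne0$ the decaying solution is $\widehat G_j=Ae^{-|j|\alpha}$. The homogeneous relation for $j\neq0$ forces $2\cosh\alpha=4-2\cos\theta$, that is $\cosh\alpha=2-\cos\theta$. The $j=0$ equation gives $A(2\cosh\alpha-2e^{-\alpha})=2A\sinh\alpha=1$. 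Since $S_{22}$ samples $G_h$ on the layer $j=0$, its symbol is $\widehat s_h=\widehat G_0=1/(2\sinh\alpha)$.

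\emph{Double-layer symbol.} In \eqref{eq:SD-def}, each source has exactly one neighbour outside $\gamma$, namely the one in the adjacent normal layer on the non-active side. Hence the symbol of $D_{22}$ is
\[
 \widehat G_1-\widehat G_0=A(e^{-\alpha}-1).
\]
The elementary identity $(1-e^{-\alpha})/(e^\alpha-e^{-\alpha})=1/(1+e^\alpha)$ then yields \eqref{eq:d-symbol}.

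\emph{Equivalences.} For $\theta\in[-\pi,\pi)\setminus\{0\}$, $\cosh\alpha-1=1-\cos\theta=2\sin^2(\theta/2)$. Hence $\alpha\simeq|\theta|$, and $\alpha\le\operatorname{arcosh}3$. On this bounded range $\sinh\alpha\simeq\alpha$, which gives $\widehat s_h\simeq1/\alpha$. Likewise $1+e^\alpha\in[2,4+\sqrt8]$, which gives $|\widehat d_h|\simeq1$. All these constants are absolute, so they are independent of $h$ and of the nonzero resolved mode.

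The main obstacle is justifying that the free-space LGF with the Martinsson--Rodin normalization $G_h(0)=0$, restricted to a periodic line, really acts through these symbols. The free-space sum does not periodize directly, because the two-dimensional LGF grows logarithmically.

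I would handle this by working with the periodic lattice Green's function on the strip $\mathbb Z_M\times\mathbb Z$. It differs from the periodization of the free-space kernel only in the $\theta=0$ mode: the normalization constant and the linear growth in $|j|$ are both constant in $m$. By \Cref{rem:LGF-normalization}, additive constants do not affect the reduced operators. It then suffices to check that all nonzero modes are reproduced exactly. This is precisely why the statement excludes $\theta=0$.
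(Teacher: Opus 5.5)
Your proposal is correct and follows the paper's proof essentially step for step: a tangential Fourier transform, the decaying solution $Ae^{-\alpha|j|}$ of the three-term recurrence with the unit jump at the source layer giving $A=1/(2\sinh\alpha)$, the single exterior normal neighbour producing $\widehat d_h=A(e^{-\alpha}-1)$, and elementary bounds using $\alpha\le\operatorname{arcosh}3$. Your closing paragraph on periodizing the free-space LGF makes explicit a point the paper leaves implicit by working directly on the periodic strip; strictly, the $j$-linear zero-mode term is not a global additive constant, so \Cref{rem:LGF-normalization} does not literally cover it, but this is harmless because the lemma only concerns $\theta\neq0$.
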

\begin{proof}
The Fourier transform of the surface stencil $h^{-1}(-1,2,-1)$ gives
\eqref{eq:k-symbol}. For the LGF, put the source layer at $j=0$ and transform
in the tangential index. Away from that layer, the normal sequence satisfies
\[
   \hat u_{j+1}-(4-2\cos\theta)\hat u_j+\hat u_{j-1}=0.
\]
The decaying solution is $\hat u_j=A(\theta)e^{-\alpha(\theta)|j|}$, with
$\cosh\alpha=2-\cos\theta$. Since the scaled point source in
$-L_hG_h=\delta_0$ has value $h^{-2}$, multiplication by $h^2$ leaves the
unit jump
\[
   A(\theta)\bigl(2e^{-\alpha}-(4-2\cos\theta)\bigr)
   = A(\theta)\bigl(e^{-\alpha}-e^{\alpha}\bigr)
   = -2A(\theta)\sinh\alpha(\theta)=-1.
\]
Thus $A=(2\sinh\alpha)^{-1}$, which is the single-layer trace symbol.
Moreover, $\sinh(\alpha/2)=|\sin(\theta/2)|$, so
$\sinh\alpha\simeq\alpha\simeq|\sin(\theta/2)|$ uniformly on the resolved
range. In the flat geometry each source has one exterior normal neighbour.
The difference in \eqref{eq:SD-def} therefore gives
\[
 \widehat d_h(\theta)
 =A(\theta)(e^{-\alpha(\theta)}-1)
 =-\frac{1}{1+e^{\alpha(\theta)}},
\]
which is uniformly bounded above and below in magnitude and extends
continuously to $\widehat d_h(0)=-1/2$. The zero mode is omitted from the
density stiffness because $\widehat k_h(0)=0$.
\end{proof}

\begin{theorem}[Flat-symbol conditioning of density formulations] \label{thm:density-conditioning}
In the fixed-period flat periodic model of
\Cref{lem:flat-harmonic-trace,lem:symbols}, the nonzero eigenvalues satisfy
\begin{equation}\label{eq:density-eigenvalues}
 \lambda_{\min}^+(\widetilde K_{\rm red}^{S})
 \simeq\lambda_{\max}(\widetilde K_{\rm red}^{S})\simeq h^{-1},
 \qquad
 \lambda_{\min}^+(\widetilde K_{\rm red}^{D})\simeq h,
 \quad
 \lambda_{\max}(\widetilde K_{\rm red}^{D})\simeq h^{-1}.
\end{equation}
Consequently,
\begin{equation}\label{eq:density-kappa}
   \kappa^+(\widetilde K_{\rm red}^{S})\lesssim 1,
   \qquad
   \kappa^+(\widetilde K_{\rm red}^{D})\lesssim h^{-2},
\end{equation}
where $S$ and $D$ denote the single- and double-layer parametrizations and
$\kappa^+$ is the condition number on the nonzero Fourier subspace. The
constants are uniform in $h$ and in the flat-interface cut offset.
\end{theorem}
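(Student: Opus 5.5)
The approach is to diagonalize both density Galerkin matrices in the tangential discrete Fourier basis. In the flat periodic model every operator in the chain is translation invariant in the tangential index $m$: the single- and double-layer blocks $S_{22},D_{22}$ and $S_{12},D_{12}$, the harmonic extension $H$, the extrapolation $R_1,R_2$, and the surface stiffness restricted to traces along the cut line. We therefore aim for a factorization
\[
 \widehat{\widetilde K}_{\rm red}^{P}(\theta)
 =\widehat k_h^{\rm eff}(\theta)\,|\widehat p_h(\theta)|^2,
 \qquad p\in\{s,d\},
\]
where $\widehat k_h^{\rm eff}$ is the symbol of the direct reduced operator $K_{\rm red}=E^TKE$. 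The starting point is \Cref{prop:equivalence}, $\widetilde K_{\rm red}=P_2^TK_{\rm red}P_2$: because all operators are circulant, the congruence becomes multiplication of symbols, and $P_2^T$ contributes the complex conjugate $\overline{\widehat p_h}$. The eigenvalues on the nonzero modes $\theta\in\Theta_M\setminus\{0\}$ are then exactly these symbol values.

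The first step is to compute $\widehat k_h^{\rm eff}$. In the proof of \Cref{lem:flat-harmonic-trace}, the trace of $\mathcal E_hv_2$ at the tangential grid points has symbol $m_{d,\rho}(\theta)\widehat V(\theta)$. The intrinsic stiffness on the flat line is the 1D $P_1$ stiffness with symbol $\widehat k_h(\theta)=\frac4h\sin^2\frac\theta2$ from \Cref{lem:symbols}. This gives
\[
 \widehat k_h^{\rm eff}(\theta)=\widehat k_h(\theta)\,m_{d,\rho}(\theta)^2 .
\]
Two bounds on $m_{d,\rho}$ are needed. The lower bound $m_{d,\rho}\ge c_D$ is already proved, and the upper bound $m_{d,\rho}\le1$ holds because every factor is at most one. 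Hence $\widehat k_h^{\rm eff}\simeq h^{-1}\sin^2(\theta/2)$, uniformly in the offset $(d,\rho)$. One point needs checking. For $j<0$ the cut line may lie on the exterior side, where layers are given by extrapolation rather than by $e^{-j\alpha}$. I would handle this by writing the extrapolation symbol explicitly: the affine rule gives $2-e^{-\alpha}$ and similar factors. These are still bounded above and below on $[0,\operatorname{arcosh}3]$, so the same two-sided bound on the trace multiplier holds. I expect this bookkeeping for the exterior and extrapolated layers to be the main obstacle, because positivity of the multiplier is no longer automatic once exterior layers are involved. If direct positivity fails, the fallback is continuity plus compactness in $(\theta,\rho)$ together with the nonvanishing already established in \Cref{lem:flat-harmonic-trace}.

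Next we insert the layer symbols from \Cref{lem:symbols}.

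For the single layer, $|\widehat s_h|^2\simeq\alpha^{-2}\simeq\sin^{-2}(\theta/2)$, using $\sinh(\alpha/2)=|\sin(\theta/2)|$. The product is therefore $\simeq h^{-1}$ uniformly over all nonzero resolved modes. This gives both $\lambda_{\min}^+\simeq\lambda_{\max}\simeq h^{-1}$ and $\kappa^+\lesssim1$.

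For the double layer, $|\widehat d_h|^2\simeq1$, so the eigenvalues are $\simeq h^{-1}\sin^2(\theta/2)$. The extreme nonzero modes must be evaluated on the resolved set:
\begin{itemize}
\item at $\theta=2\pi/M=2\pi h/L$, with $L$ fixed, the eigenvalue is $\simeq h^{-1}(h/L)^2\simeq h$;
\item at $\theta\approx\pi$ it is $\simeq h^{-1}$.
\end{itemize}
These give the stated eigenvalue scalings and $\kappa^+\lesssim h^{-2}$.

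All constants depend only on $D$, on $\operatorname{arcosh}3$, and on the extrapolation weights, never on $\rho$. This gives the asserted uniformity in the cut offset.
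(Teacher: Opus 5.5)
Your proposal is correct and follows the paper's proof essentially step for step. You diagonalize in the tangential Fourier basis, obtain the eigenvalue $|\widehat p_2(\theta)|^2|m_{d,\rho}(\theta)|^2\widehat k_h(\theta)$ with $c_D\le m_{d,\rho}\le 1$, and then use the single- and double-layer symbol bounds together with the extreme resolved frequencies; routing through $P_2^TK_{\rm red}P_2$ is the same as the paper's direct use of $F^TKF$, since $F=EP_2$.

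The exterior/extrapolated-layer bookkeeping you flag as the main obstacle is not needed. In the flat model of \Cref{lem:flat-harmonic-trace}, the cut line already lies on the harmonic side of $\gamma_2$ ($0\le d\le D$), so the two-sided multiplier bound is available directly. Your handling of it through the factor $2-e^{-\alpha}\in[1,2)$ would in any case be harmless.
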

\begin{proof}
All operators commute with tangential shifts, so
$\widetilde K_{\rm red}=F^TKF$ is diagonalized by the discrete Fourier modes.
For a nonzero mode $\theta$, the density reconstruction has master-layer
symbol $\widehat p_2(\theta)$, equal to $\widehat s_h(\theta)$ or
$\widehat d_h(\theta)$. By \Cref{lem:flat-harmonic-trace}, its trace on the
cut line has the additional multiplier $m_{d,\rho}(\theta)$. Hence the
corresponding eigenvalue is
\[
 |\widehat p_2(\theta)|^2|m_{d,\rho}(\theta)|^2\widehat k_h(\theta),
\]
where
$e^{-(D+1)\operatorname{arcosh}3}\le|m_{d,\rho}|\le1$ uniformly in the cut
offset. For the single layer, \Cref{lem:symbols} gives
\[
   |\widehat s_h(\theta)|^2|m_{d,\rho}(\theta)|^2\widehat k_h(\theta)
   \simeq \left(\frac{1}{\alpha(\theta)}\right)^2
   |m_{d,\rho}(\theta)|^2\frac4h\sin^2\frac{\theta}{2}
   \simeq h^{-1}
\]
on every nonzero resolved mode. For the double layer,
$|\widehat d_h(\theta)|\simeq1$, and therefore its eigenvalue is comparable
to $h^{-1}\sin^2(\theta/2)$. Since the period $L=Mh$ is fixed, the smallest
nonzero resolved frequency satisfies $|\sin(\theta/2)|\simeq h$, while the
largest is of order one. This proves \eqref{eq:density-eigenvalues} and
\eqref{eq:density-kappa}. The zero mode is excluded because
$\widehat k_h(0)=0$.
\end{proof}

\begin{remark}[Operator preconditioning and curved interfaces]\label{rem:operator-precond}
The spectral cancellation of the single-layer potential is the discrete analogue of Calder\'on preconditioning~\citep{steinbach1998construction,hiptmair2006operator}: the Laplace--Beltrami operator has order $+2$ and the single layer has order $-1$, so the congruence produces an order-zero operator. For smooth closed curves the flat calculation is the principal-symbol model obtained by localizing and flattening; curvature and $\gamma_3$ extrapolation are lower-order perturbations. A fully rigorous curved-interface proof requires a discrete pseudodifferential calculus for lattice layer operators on curved active vertex sets and is left to future work; the numerical experiments in \Cref{sec:numerics} confirm the scalings for the circle and deformed curves.
\end{remark}

\subsection{Extensions to other bulk operators}\label{sec:extensions}

The stabilization mechanism of this paper---constraining the surface trial space via a discrete bulk extension and reducing the system through a Galerkin projection---does not depend on the specific choice of the five-point Laplacian. In this subsection we discuss the framework's extension to other bulk operators, first through a concrete symbol-level analysis for the screened Poisson (modified Helmholtz) operator, and then through a broader discussion of the applicable operator class.

\subsubsection{Screened Poisson matching}

The argument below is presented at the level of the principal symbol on a flat periodic interface and is supported numerically in \Cref{sec:numerics-sweep}.

When positive reaction terms are added both in the bulk and on the surface, $-\Delta u+\sigma_{\mathrm{bulk}}u=f$ in $\Omega$ and $-\Delta_\Gamma u+\sigma_{\mathrm{surface}}u=g$ on $\Gamma$, the constant mode is no longer in the kernel and no mean-zero gauge is required. On a locally flat interface, the screened surface symbol is
\[
    k_h^{\sigma_{\mathrm{surface}}}(\theta)\simeq\tfrac{4}{h}\sin^2(\theta/2)+\sigma_{\mathrm{surface}}h
\]
and the screened single-layer symbol is $s_h^{\sigma_{\mathrm{bulk}}}(\theta)\simeq 1/\sqrt{4\sin^2(\theta/2)+\sigma_\mathrm{bulk}h^2}$. The single-layer density formulation has principal symbol
\[
    k_h^{\sigma_{\mathrm{surface}}}(\theta)|s_h^{\sigma_{\mathrm{bulk}}}(\theta)|^2\simeq h^{-1}\frac{4\sin^2(\theta/2)+\sigma_\mathrm{surface}h^2}{4\sin^2(\theta/2)+\sigma_{\mathrm{bulk}}h^2}.
\]
The screened bulk operator therefore acts as a matched preconditioner when $\sigma_\mathrm{bulk}=\sigma_\mathrm{surface}$: the principal symbol is constant ($\simeq h^{-1}$). For mismatched parameters, conditioning remains bounded in $h$ with prefactor controlled by
\[
    \max\{1,\sigma_\mathrm{bulk}/\sigma_\mathrm{surface},\sigma_\mathrm{surface}/\sigma_\mathrm{bulk}\},
\]
excluding division by zero. The cut-independent stabilization mechanism is unchanged.

\subsubsection{Broader operator class}

The reduction extends to a Cartesian lattice operator $\mathcal{L}_h$ provided
(i) an efficiently evaluable LGF $G_h$ exists;
(ii) its layer potentials have a stably invertible trace block $P_2$;
(iii) the $\mathcal{L}_h$-harmonic extension satisfies a maximum principle or
an $\ell_h^2$ stability bound analogous to \Cref{lem:H-stab}; and
(iv) that extension is second-order consistent for smooth solutions.

The screened Laplacian treated above and monotone constant-coefficient
anisotropic scalar stencils are the most direct candidates. Although Cartesian
LGFs are available for the biharmonic and Stokes operators
\citep{liska2014parallel,liska2016fast}, those operators lack the scalar maximum
principle used here and require separate layer-block and reconstruction-stability
results; they are therefore not covered. For variable coefficients, the LGF of
the principal constant-coefficient part could instead serve as a parametrix,
with an additional perturbation analysis.

On the surface, replacing $-\Delta_\Gamma$ by
$-\nabla_\Gamma\cdot(a_\Gamma\nabla_\Gamma)+c_\Gamma$, with
$0<a_0\le a_\Gamma\le a_1$ and $c_\Gamma\ge0$, preserves the reduction and
conditioning through spectral equivalence. A genuinely two-way law,
such as $\partial_nu_b=k(u_s-u_b)$ coupled to a surface balance, would instead
produce a coupled boundary system for the layer density and surface unknown;
its analysis lies beyond the present one-way framework.

\section{Numerical results}\label{sec:numerics}

We validate the theoretical results on benchmark problems designed to test optimal convergence, cut-independent conditioning, and the different mesh-size conditioning scalings of the direct and density formulations. The lattice Green's function is precomputed on a background box with machine precision. The circle, cut-translation, parameter-sweep, and trace-observability experiments use the Cartesian auxiliary box $[-1.2,1.2]^2$ with $N\times N$ interior lattice points, mesh size $h=2.4/(N+1)$, and $N=2^{k}-1$ for $k=7,8,9,10$ (so the tabulated labels $128,256,512,1024$ refer to $N+1$). The deformed-circle convergence experiment uses $[-1.5,1.5]^2$, hence $h=3/(N+1)$, at the same four values of $N+1$. The MATLAB \texttt{pcg} solver is used with tolerance $\tau=10^{-10}$, and two-point Gaussian quadrature is used on each interface segment $\Gamma_K^h$.

As an unreduced algebraic baseline, the full active-grid stiffness $K$ is singular for every cut, so its ordinary MATLAB condition number is \texttt{Inf}; the small-cut pathology appears in its nonzero spectrum as the smallest positive eigenvalue collapses. The figures below therefore report the solvable reduced systems. 

\subsection{Circle: zero mean}\label{sec:numerics-circle}
Let $\Gamma$ be the unit circle $x^2+y^2=1$ and the manufactured solution be $u(x,y)=x^2-y^2$. The exact surface PDE forcing $g$ is computed analytically; bulk harmonicity is satisfied automatically.

\Cref{fig:condition_number} presents the condition numbers of $K_{\mathrm{red}}$ in $E$ mode and of $\widetilde K_{\mathrm{red}}^S$ and $\widetilde K_{\mathrm{red}}^D$ in $F$ mode, computed with \texttt{cond} in MATLAB, after imposing the discrete gauge/rank-one shift. These are shifted-matrix condition numbers, not the nonzero-spectrum quotient used in \Cref{rem:conditioning-norm}. The condition numbers of $K_{\mathrm{red}}$ and $\widetilde K_{\mathrm{red}}^D$ grow as $O(h^{-2})$, while those of $\widetilde K_{\mathrm{red}}^S$ remain uniformly bounded under mesh refinement, consistent with our analysis. Subsequent results are obtained with $\widetilde K_{\mathrm{red}}^S$ using \texttt{pcg} with no additional preconditioning and tolerance $\tau=10^{-10}$.

\begin{figure}[htbp]
\centering
\includegraphics[width=0.5\textwidth, trim = 2cm 7cm 2cm 6.5cm]{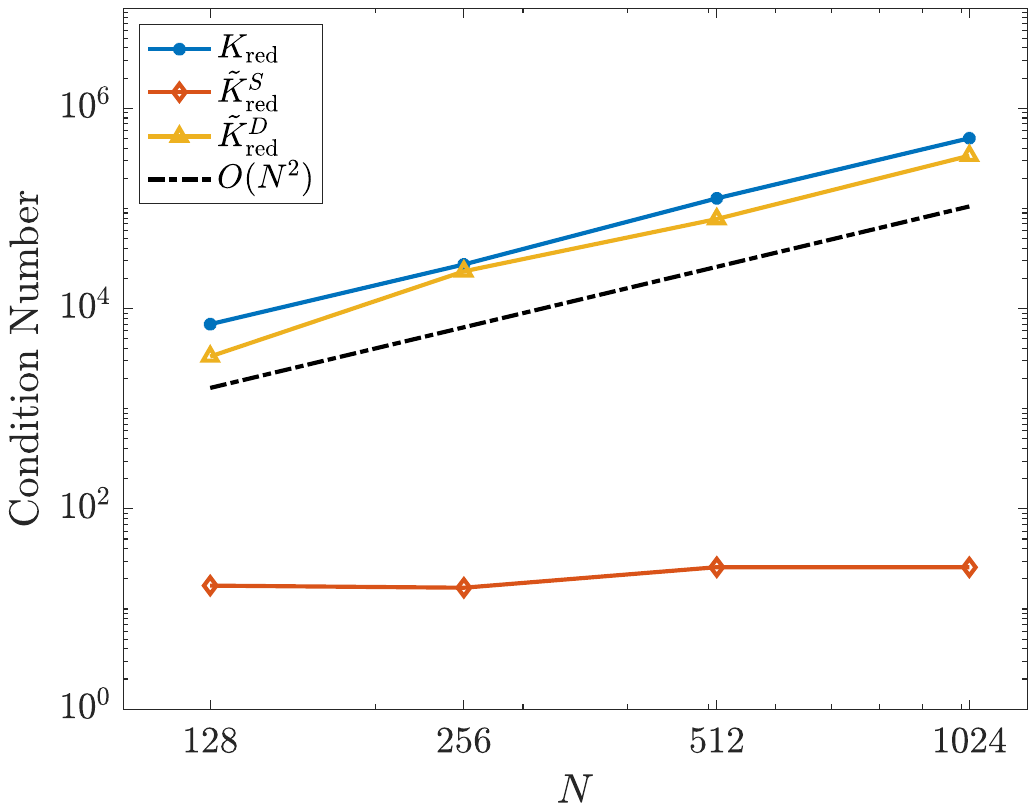}
\caption{Rank-one-shifted condition number versus $N+1=2.4/h$ for the direct $E$ formulation and the single- and double-layer $F$ formulations. The direct and double-layer curves follow the reference order $h^{-2}$; the single-layer curve remains bounded.}\label{fig:condition_number}
\end{figure}

\Cref{tab:circle} reports iteration counts of \texttt{pcg}, condition numbers, and the observed second-order convergence in the bulk $L^2$ norm and surface $L^2$ norm, with first-order convergence in the surface $H^1$ norm. The increase from $31$ to $48$ iterations across the refinement levels does not contradict bounded conditioning: \texttt{pcg} iteration counts depend on the distribution of the entire spectrum and on the components of the right-hand side, not only on the ratio of the extremal eigenvalues. We do not infer improved clustering from these counts alone.

\begin{table}[htbp]
\centering
\caption{Iteration counts, condition numbers, errors and convergence rates: bulk $L^2$, surface $L^2$, and surface $H^1$ norms.\label{tab:circle}}
\begin{tabular}{rcccccccc}
\toprule
$N{+}1$ & Iter. & Cond. &
$\|e_b\|_{L^2(\Omega)}$ & Rate &
$\|e_s\|_{L^2(\Gamma^h)}$ & Rate &
$\|e_s\|_{H^1(\Gamma^h)}$ & Rate \\
\midrule
$128$ & 31 & 17.12 & $1.2173{\times}10^{-4}$ & --   & $3.1481{\times}10^{-4}$ & --   & $2.6896{\times}10^{-2}$ & --   \\
$256$ & 37 & 16.31 & $3.1186{\times}10^{-5}$ & 1.96 & $7.8475{\times}10^{-5}$ & 2.00 & $1.3658{\times}10^{-2}$ & 0.98 \\
$512$ & 44 & 26.16 & $7.5461{\times}10^{-6}$ & 2.05 & $1.9585{\times}10^{-5}$ & 2.00 & $6.7476{\times}10^{-3}$ & 1.02 \\
$1024$ & 48 & 26.10 & $1.9180{\times}10^{-6}$ & 1.98 & $4.8991{\times}10^{-6}$ & 2.00 & $3.3365{\times}10^{-3}$ & 1.02 \\
\botrule
\end{tabular}
\end{table}

Bulk and surface errors on the $1024\times1024$ mesh and the exponentially decaying relative residuals of \texttt{pcg} are shown in \Cref{fig:circle_errors}.

\begin{figure}[htbp]
    \centering
    \subfloat[bulk]{\includegraphics[width=0.32\textwidth, trim=2cm 7cm 2cm 7cm]{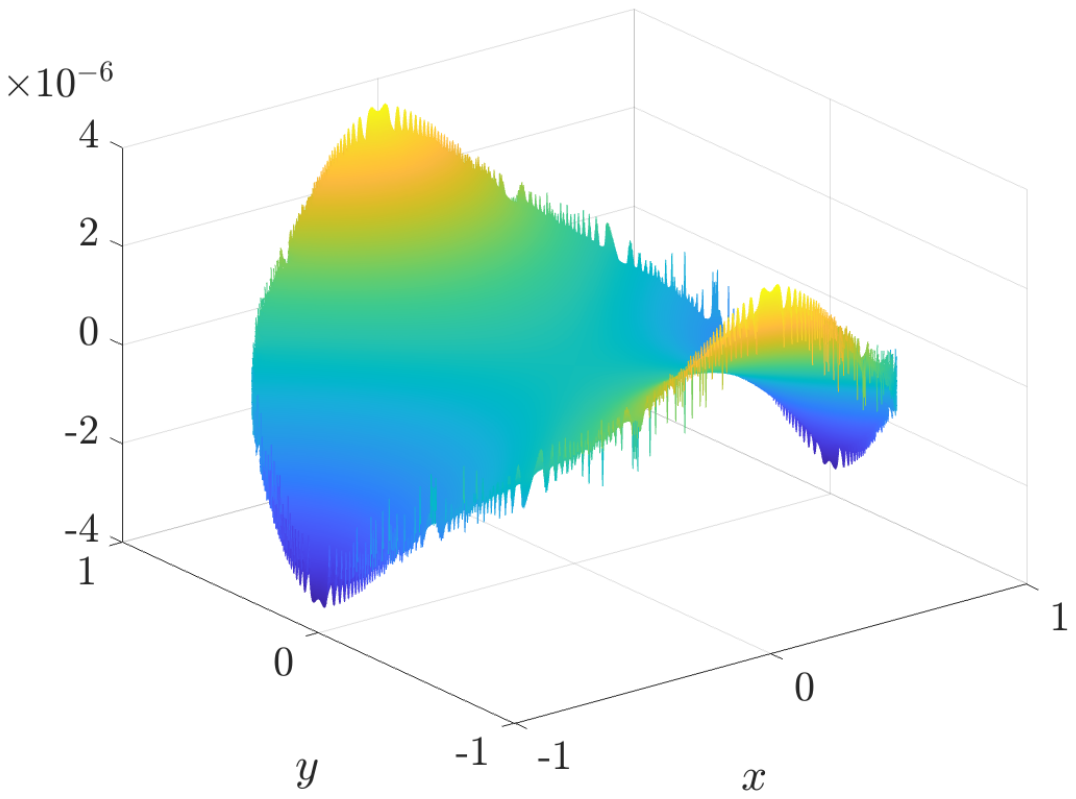}}
    \hfill
    \subfloat[surface]{\includegraphics[width=0.32\textwidth, trim=2cm 7cm 2cm 7cm]{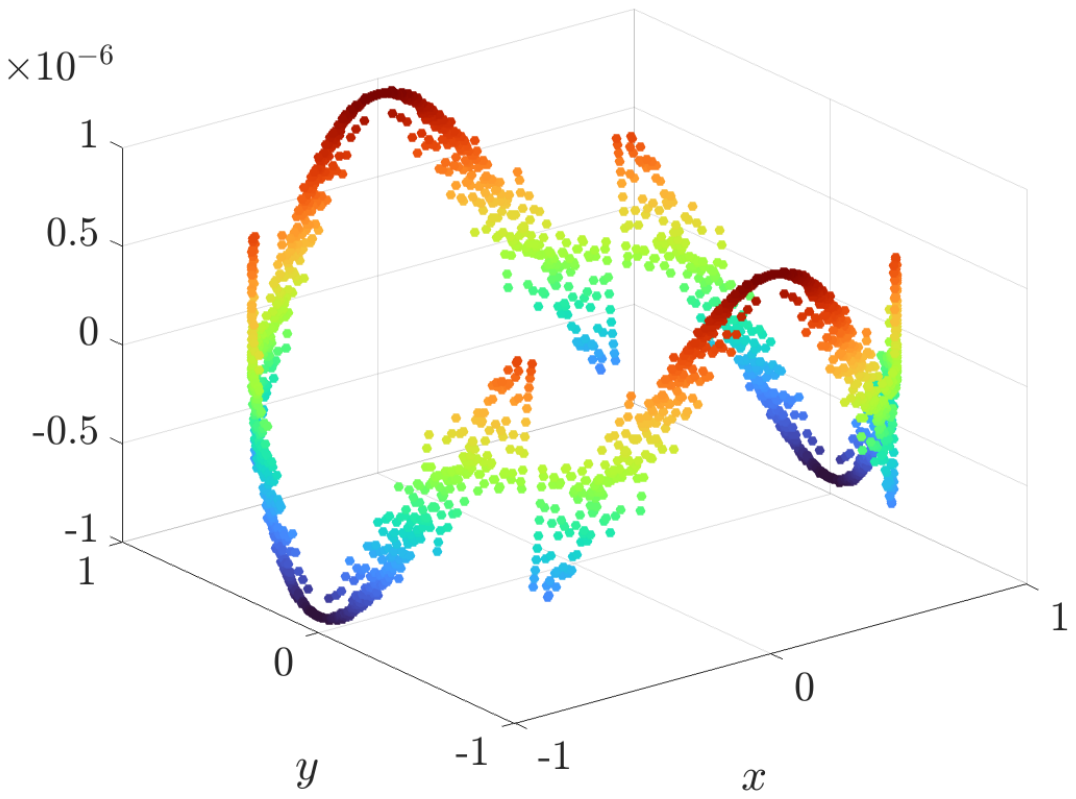}}
    \hfill
    \subfloat[relative residuals]{\includegraphics[width=0.32\textwidth, trim=2cm 7cm 2cm 7cm]{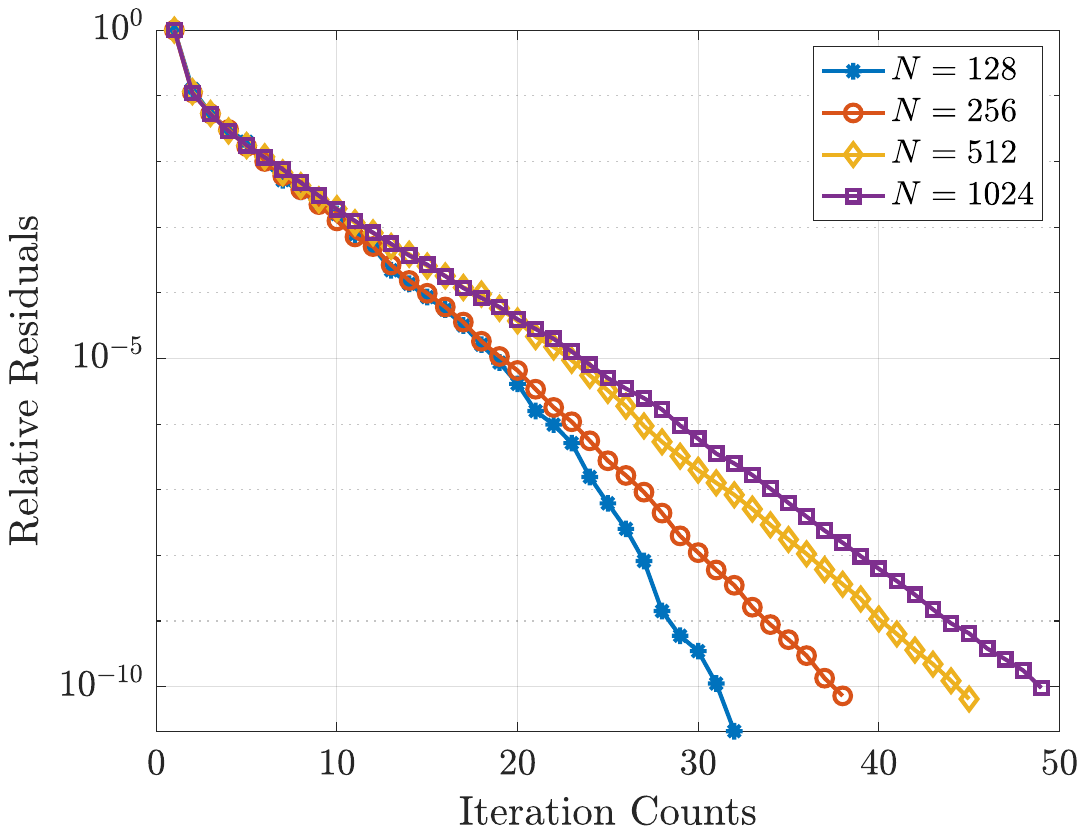}}
    \caption{Errors for the circle ($N+1=1024$) and relative residuals of the iterative solver.}
    \label{fig:circle_errors}
\end{figure}

\begin{remark}[Rank-one gauge fixing]\label{rem:gauge-alpha}
Let $(w_\gamma)_i:=\int_{\Gamma^h}\phi_i\,ds_h$ be the active-grid mean vector. For the $F$-formulation with $u_\gamma=Fq$, mean functional $m_F:=F^Tw_\gamma$, the gauge-fixed homogeneous system is
\[
(F^TKF+\alpha\,m_Fm_F^T)q=F^Tb
\] 
with $\alpha=\operatorname{tr}(K_{\rm red})/(n\,m_F^Tm_F)$, making $\operatorname{tr}(\alpha m_Fm_F^T)=\operatorname{tr}(K_{\rm red})/n$ a scale-matched gauge fixing. For an inhomogeneous bulk source $u_\gamma=u^p_\gamma+Fq$, the gauge becomes $m_F^Tq=-w_\gamma^Tu^p_\gamma$, giving $(F^TKF+\alpha\,m_Fm_F^T)q=F^Tb-F^TKu^p_\gamma-\alpha\,m_F\,w_\gamma^Tu^p_\gamma$.
\end{remark}

\subsection{Cut-position and reaction-parameter sweeps}\label{sec:numerics-sweep}
We perform two independent conditioning tests for the single-layer density
formulation: the first varies the cut position at zero reaction, while the
second fixes the geometry and varies the bulk and surface reaction parameters.

\subsubsection{Cut-translation sweep}
Set $\sigma_{\mathrm{bulk}}=\sigma_{\mathrm{surface}}=0$ and center the unit
circle at $(\beta h,0)$. We sample $101$ values of $\beta\in[-1,1]$ on the
four meshes $N+1=128,256,512,1024$. This interval contains two grid periods
and therefore samples all relative cut positions, including near-degenerate
ones. The left panel of \Cref{fig:parameter_sweep} shows that the gauge-fixed
condition numbers oscillate as the cut configuration changes but remain below
$60$ on every mesh, with no systematic growth under refinement. This is
consistent with the cut-uniformity of \Cref{thm:conditioning}.

\subsubsection{Reaction-parameter sweep}
Fix the centered circle ($\beta=0$) and the mesh $N+1=256$, and consider
\[
 -\Delta u+\sigma_{\mathrm{bulk}}u=f\text{ in }\Omega,\qquad
 -\Delta_\Gamma u+\sigma_{\mathrm{surface}}u=g\text{ on }\Gamma.
\]
We sample a $101\times101$ grid over
$(\sigma_{\mathrm{bulk}},\sigma_{\mathrm{surface}})\in[0,20]^2$. The right
panel of \Cref{fig:parameter_sweep} shows that the condition number is smallest
near the matching line
$\sigma_{\mathrm{bulk}}=\sigma_{\mathrm{surface}}$ and deteriorates as the
parameters become strongly mismatched, in agreement with the flat-symbol
calculation in \Cref{sec:extensions}. 
\begin{figure}[htbp]
    \centering
    \subfloat[cut sweep: $\sigma_{\mathrm{bulk}}=\sigma_{\mathrm{surface}}=0$]{\includegraphics[width=0.45\textwidth, trim=2cm 7cm 2cm 7cm]{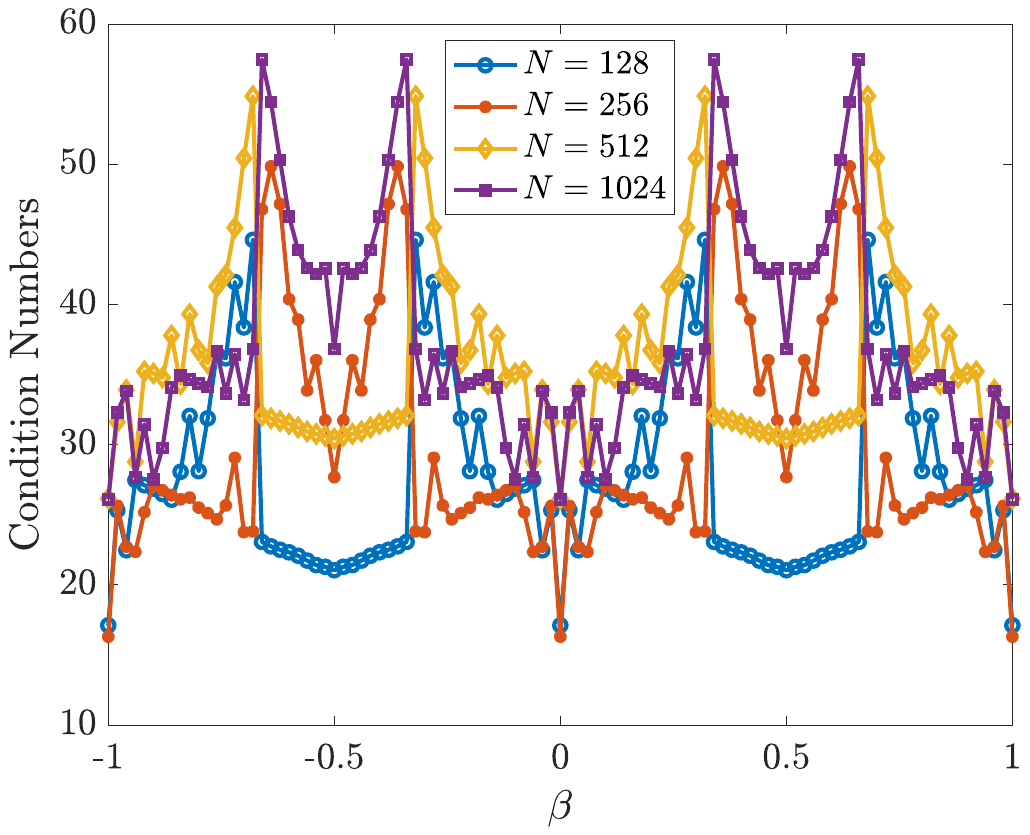}}
    \quad
    \subfloat[reaction sweep: $\beta=0$, $N+1=256$]{\includegraphics[width=0.45\textwidth, trim=2cm 7cm 2cm 7cm]{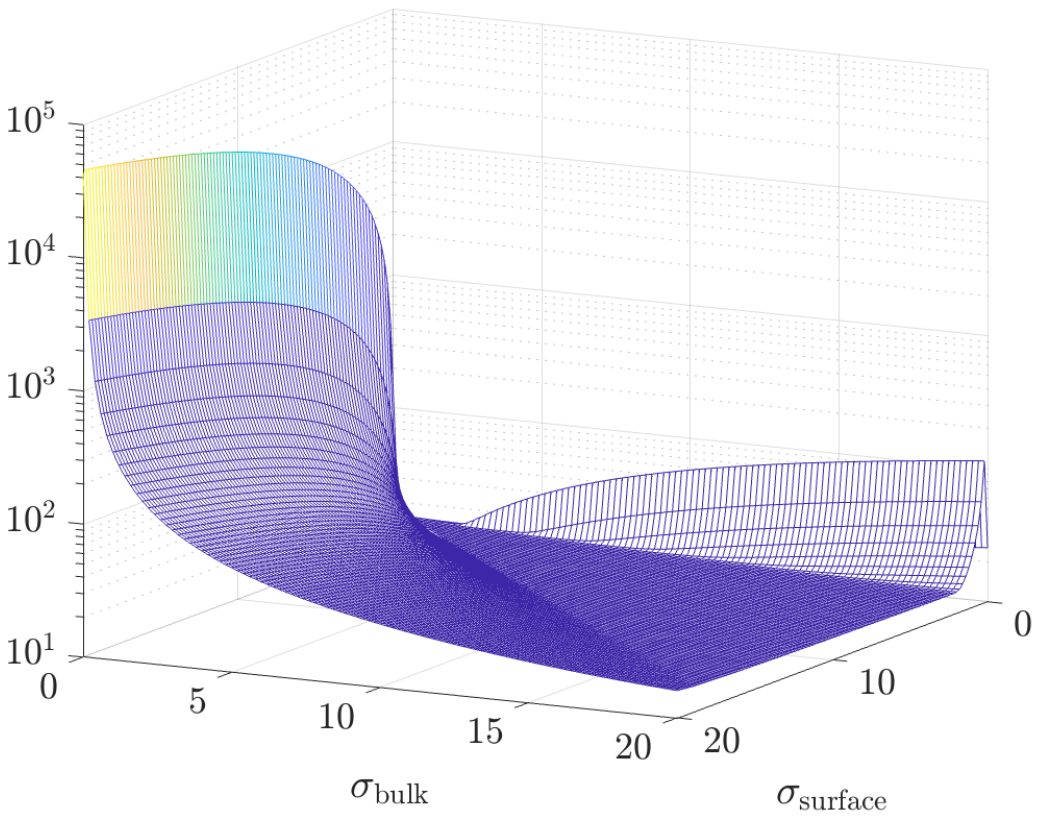}}
    \caption{Condition numbers of the single-layer density formulation
    in two independent tests. Left: the rank-one-gauge-fixed condition number
    versus the cut-translation parameter $\beta$ on four meshes at zero
    reaction. Right: the condition number over
    $(\sigma_{\rm bulk},\sigma_{\rm surface})\in[0,20]^2$ for the fixed
    centered circle, with the smallest values near the matching diagonal.}
    \label{fig:parameter_sweep}
\end{figure}

\subsection{\texorpdfstring{Direct numerical evidence for the one-sided trace estimate}{Direct numerical evidence for the one-sided trace estimate}}
\label{sec:numerics-observability}

Condition numbers alone are only indirect evidence for \Cref{ass:curved-trace}. We therefore compute the Rayleigh constants appearing in that assumption and in \Cref{lem:observability} directly. On the unit circle translated by $(\beta h,0)$ with $-1\le\beta\le 1$, using the single-layer reduction map $E$, we evaluate
\begin{align}
    C_{\mathrm{tr}}(h,\beta)
    &:=
    \sup_{v_2\neq0}
    \frac{\|u_h\|_{L^2(\Omega_h^\Gamma)}^2}{h\|u_h\|_{L^2(\Gamma^h)}^2},
    \label{eq:C-tr-num}\\
    C_{\mathrm{obs}}(h,\beta)
    &:=
    \sup_{v_2\neq0}
    \frac{\|v_2\|_{\ell_h^2(\gamma_2)}}{\|u_h\|_{L^2(\Gamma^h)}},
    \label{eq:C-obs-num}\\
    C_{\mathrm{up}}(h,\beta)
    &:=
    \sup_{v_2\neq0}
    \frac{\|u_h\|_{L^2(\Gamma^h)}}{\|v_2\|_{\ell_h^2(\gamma_2)}},
    \label{eq:C-up-num}
\end{align}
where $u_h=(Ev_2)_h$, $\Omega_h^\Gamma$ is the active band, and the surface and band norms are realized by the corresponding mass matrices assembled on $\Gamma^h$ and on $\Omega_h^\Gamma$. Equivalently, $C_{\mathrm{tr}}$ is obtained from the largest generalized eigenvalue of the band/surface mass pair (scaled by $h^{-1}$), while $C_{\mathrm{obs}}$ and $C_{\mathrm{up}}$ are obtained from the extremal generalized eigenvalues of the surface/$\ell_h^2$ pair. The same meshes as in \Cref{sec:numerics-sweep} are used ($N+1=128,256,512,1024$), with $101$ values of $\beta$ equally spaced in $[-1,1]$.

\begin{figure}[htbp]
    \centering
    \subfloat[$C_{\mathrm{tr}}$]{\includegraphics[width=0.32\textwidth, trim=2cm 7cm 2cm 7cm]{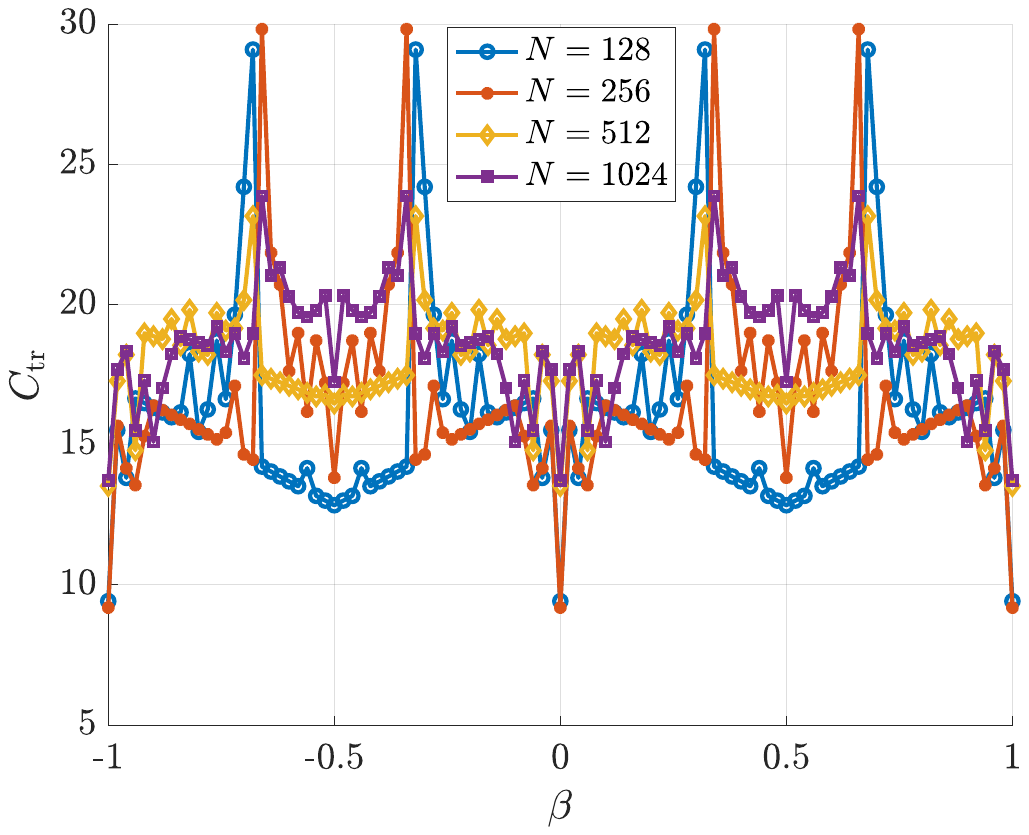}}
    \hfill
    \subfloat[$C_{\mathrm{obs}}$]{\includegraphics[width=0.32\textwidth, trim=2cm 7cm 2cm 7cm]{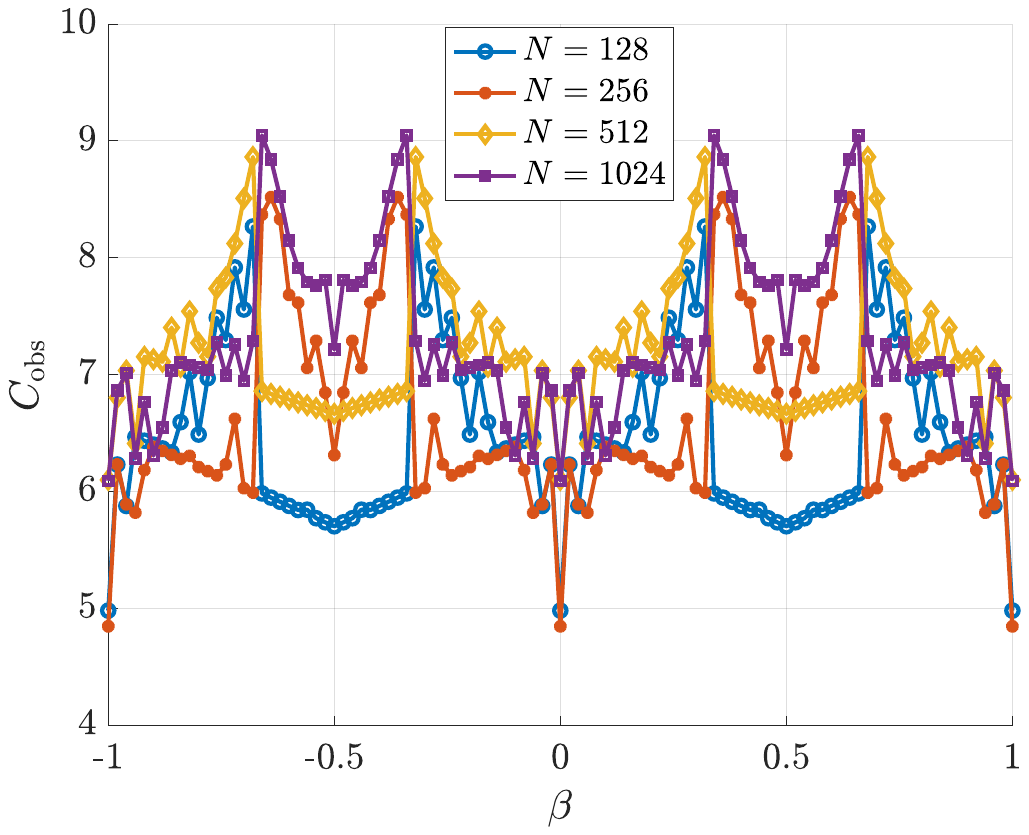}}
    \hfill
    \subfloat[$C_{\mathrm{up}}$]{\includegraphics[width=0.32\textwidth, trim=2cm 7cm 2cm 7cm]{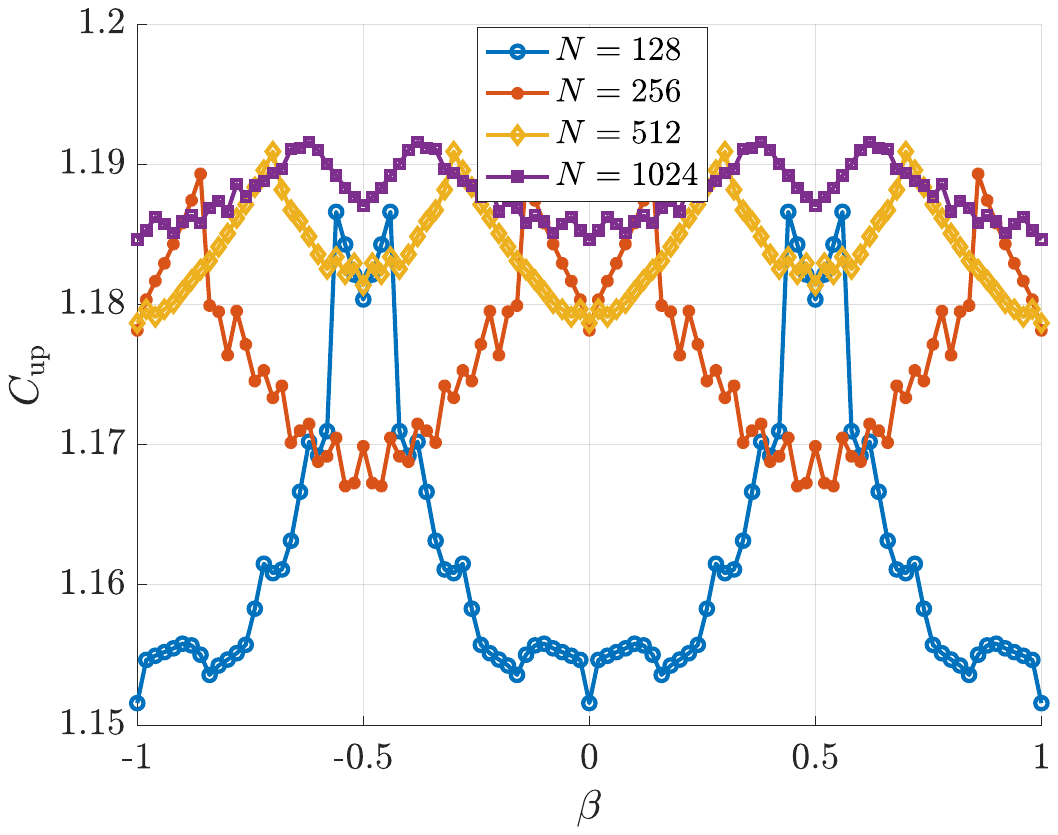}}
    \caption{Rayleigh constants \eqref{eq:C-tr-num}--\eqref{eq:C-up-num} for the reduced map $E$ on the unit circle under a one-cell translation sweep. All three quantities remain $O(1)$ in $h$ and $\beta$.}
    \label{fig:observability_sweep}
\end{figure}

\Cref{fig:observability_sweep} plots the three constants against $\beta$. All curves remain of order one: they oscillate with the cut position (as expected from changing cut ratios) but show no mesh-refinement blow-up over the tested range. \Cref{tab:observability} records the maxima over $\beta$. The trace constant satisfies $\max_\beta C_{\mathrm{tr}}\in[23,30]$ on all four meshes; the observability constant satisfies $\max_\beta C_{\mathrm{obs}}\approx 8$--$9$, increasing by about $9\%$ under an eightfold refinement, so these four levels do not by themselves exclude very slow growth; the upper trace constant $C_{\mathrm{up}}$ is essentially mesh-independent and tightly clustered near $1.19$. These measurements provide direct numerical support for \Cref{ass:curved-trace}, the observability bound used in \Cref{thm:conditioning}, and the upper reconstruction bound of \Cref{lem:E-bounded}, beyond the indirect evidence of bounded condition numbers in \Cref{fig:parameter_sweep}.

\begin{table}[htbp]
\centering
\caption{Maxima over $\beta\in[-1,1]$ of the Rayleigh constants \eqref{eq:C-tr-num}--\eqref{eq:C-up-num}.\label{tab:observability}}
\begin{tabular}{rccc}
\toprule
$N{+}1$ & $\max_\beta C_{\mathrm{tr}}$ & $\max_\beta C_{\mathrm{obs}}$ & $\max_\beta C_{\mathrm{up}}$ \\
\midrule
$128$  & $29.093$ & $8.2679$ & $1.1866$ \\
$256$  & $29.820$ & $8.5193$ & $1.1893$ \\
$512$  & $23.155$ & $8.8632$ & $1.1909$ \\
$1024$ & $23.872$ & $9.0477$ & $1.1916$ \\
\botrule
\end{tabular}
\end{table}

\subsection{Direct numerical evidence for reduced-space approximation}
\label{sec:numerics-approximation}

We next test the $h$-scaling in \Cref{ass:reduced-approx} independently of the manufactured-solution errors. Let $Z_h$ be the periodic piecewise-linear comparison space on the ordered vertices of $\Gamma^h$, and let $M_1$ and $K_1$ denote its surface mass and stiffness matrices. With the lumped mass matrix $M_L$, define the discrete comparison norm
\begin{equation}
 \|\varphi_h\|_{H^2_h(\Gamma^h)}^2
 :=\boldsymbol\varphi^T
 \bigl(M_1+K_1+K_1M_L^{-1}K_1\bigr)\boldsymbol\varphi .
 \label{eq:discrete-H2-norm}
\end{equation}
The measured constant is
\begin{equation}
 C_{\mathrm{apx}}(h)
 :=\sup_{0\ne\varphi_h\in Z_h}
 \inf_{w_h\in W_h^{\mathrm{red}}}
 \frac{\|\varphi_h-w_h\|_{H^1(\Gamma^h)}}
      {h\|\varphi_h\|_{H^2_h(\Gamma^h)}} .
 \label{eq:C-apx-num}
\end{equation}
All inner products, including the cross Gram matrix between $Z_h$ and $W_h^{\mathrm{red}}$, are assembled by Gaussian quadrature on the interface segments. Eliminating the best-approximation coefficients gives the $H^1$ Schur-complement error matrix, and $C_{\mathrm{apx}}(h)^2$ is its largest generalized eigenvalue relative to $h^2$ times the matrix in \eqref{eq:discrete-H2-norm}. Constants are reproduced exactly by the reduced map, so inclusion of the constant mode does not affect the largest eigenvalue up to roundoff.

\Cref{tab:approximation-constant} reports four refinements. For the circle, the table records the maximum over the five translations $\beta\in\{-1,-\tfrac12,0,\tfrac12,1\}$ in the box $[-1.2,1.2]^2$. The centered deformed curve from \Cref{sec:numerics-deformed} uses $[-1.5,1.5]^2$. The smooth nonconvex test is the four-lobed curve $\rho=1+0.15\cos(4\theta)$ in $[-1.35,1.35]^2$. In every case $N+1$ is the tabulated resolution and $h$ is the corresponding box width divided by $N+1$.

\begin{table}[htbp]
\centering
\caption{Discrete reduced-space approximation constants \eqref{eq:C-apx-num}. The circle column is the maximum over five sub-cell translations.\label{tab:approximation-constant}}
\begin{tabular}{rccc}
\toprule
$N{+}1$ & circle, $\max_\beta$ & deformed & smooth nonconvex \\
\midrule
$64$  & $0.5934$ & $0.6674$ & $0.5843$ \\
$128$ & $0.5718$ & $0.6810$ & $0.6859$ \\
$256$ & $0.6016$ & $0.6901$ & $0.6695$ \\
$512$ & $0.6387$ & $0.6860$ & $0.6604$ \\
\botrule
\end{tabular}
\end{table}

The measured constants remain below $0.70$ and show no mesh-refinement blow-up over these levels. This is direct finite-dimensional evidence for the uniform $h$-scaling assumed in \Cref{ass:reduced-approx}, including beyond a translated circle. It is not a proof of the continuous curved-interface assumption: \eqref{eq:C-apx-num} samples the comparison space $W_h$, uses the discrete norm \eqref{eq:discrete-H2-norm}, and covers only the reported meshes and geometries.

\subsection{Deformed circle: nonzero mean gauge and inhomogeneity}\label{sec:numerics-deformed}
We deform the circle by
\[
    \rho=1+0.16\cos(2\theta+0.4)+0.1\sin(3\theta-0.7)+0.07\cos(5\theta+1.3)+0.05\sin(8\theta+0.2)
\]
and choose the manufactured solution $u(x,y)=\sin(\pi x)\cos(2\pi y)+0.25\cos(2x+y)+0.15xy+0.1x$, which gives nonzero mean gauge and nonzero right-hand sides both in the bulk and on the surface, for $\sigma_{\mathrm{bulk}}=\sigma_{\mathrm{surface}}=0$. The exact nonzero mean is used to gauge the solution in the code. \Cref{tab:deformed} shows similar second-order bulk and surface $L^2$ convergence and first-order surface $H^1$ convergence; the condition numbers remain below $100$ on all meshes.

\begin{table}[htbp]
\centering
\caption{Deformed circle: iteration counts, condition numbers, errors and convergence.\label{tab:deformed}}
\begin{tabular}{rcccccccc}
\toprule
$N$ & Iter. & Cond. &
$\|e_b\|_{L^2(\Omega)}$ & Rate &
$\|e_s\|_{L^2(\Gamma^h)}$ & Rate &
$\|e_s\|_{H^1(\Gamma^h)}$ & Rate \\
\midrule
$127$ & 49 & 28.14 & $2.2604{\times}10^{-3}$ & --   & $3.2441{\times}10^{-3}$ & --   & $1.5290{\times}10^{-1}$ & --   \\
$255$ & 56 & 36.87 & $5.6947{\times}10^{-4}$ & 1.99 & $8.1397{\times}10^{-4}$ & 1.99 & $7.6282{\times}10^{-2}$ & 1.00 \\
$511$ & 61 & 44.48 & $1.4087{\times}10^{-4}$ & 2.02 & $2.0156{\times}10^{-4}$ & 2.01 & $3.8054{\times}10^{-2}$ & 1.00 \\
$1023$ & 62 & 43.99 & $3.5661{\times}10^{-5}$ & 1.98 & $5.1714{\times}10^{-5}$ & 1.96 & $1.9187{\times}10^{-2}$ & 0.99 \\
\botrule
\end{tabular}
\end{table}

\begin{figure}[htbp]
    \centering
    \subfloat[bulk]{\includegraphics[width=0.32\textwidth, trim=2cm 7cm 2cm 7cm]{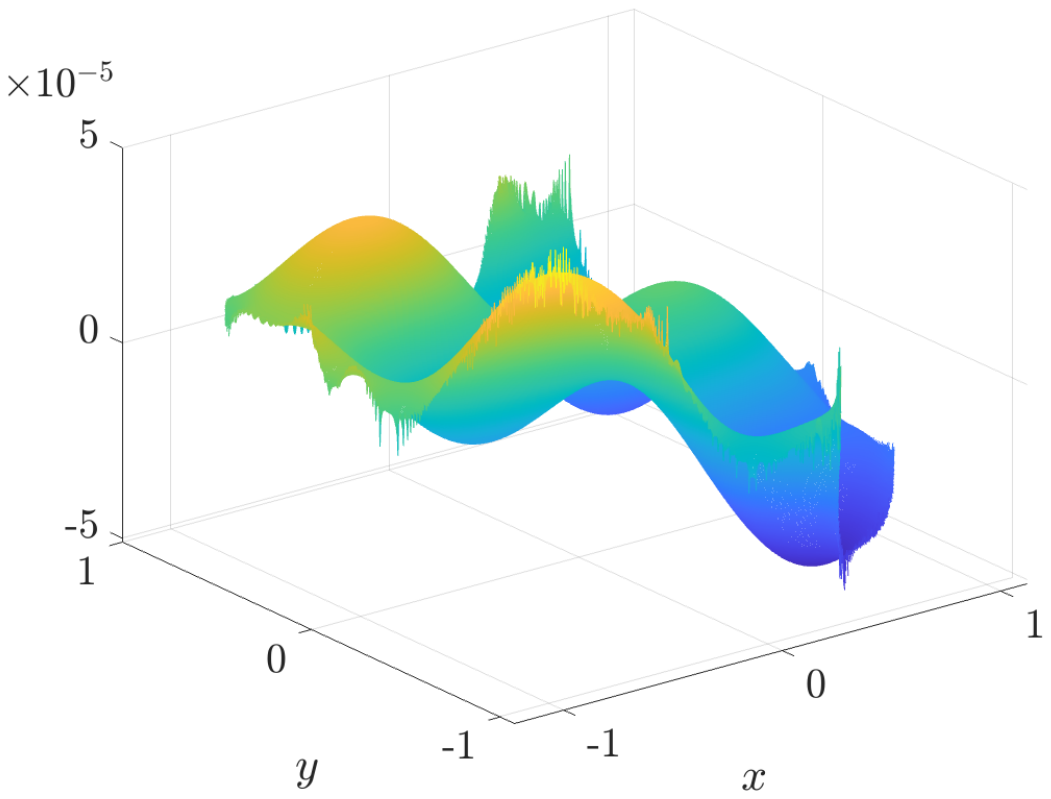}}
    \hfill
    \subfloat[surface]{\includegraphics[width=0.32\textwidth, trim=2cm 7cm 2cm 7cm]{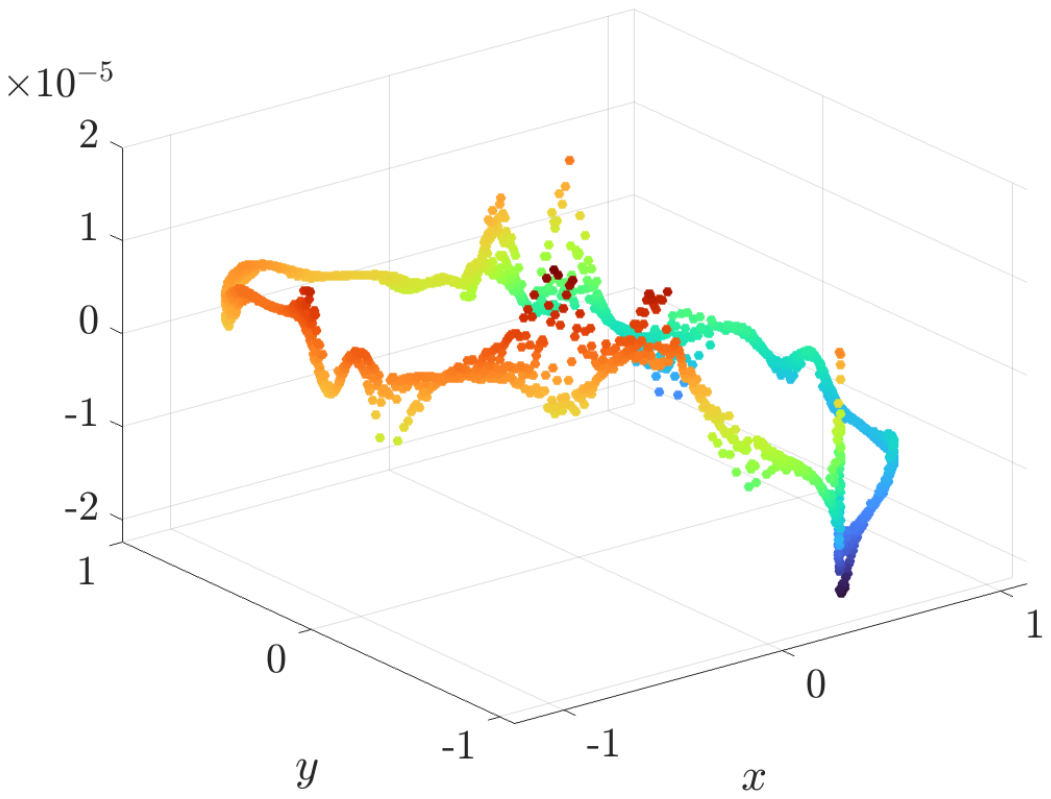}}
    \hfill
    \subfloat[relative residuals]{\includegraphics[width=0.32\textwidth, trim=2cm 7cm 2cm 7cm]{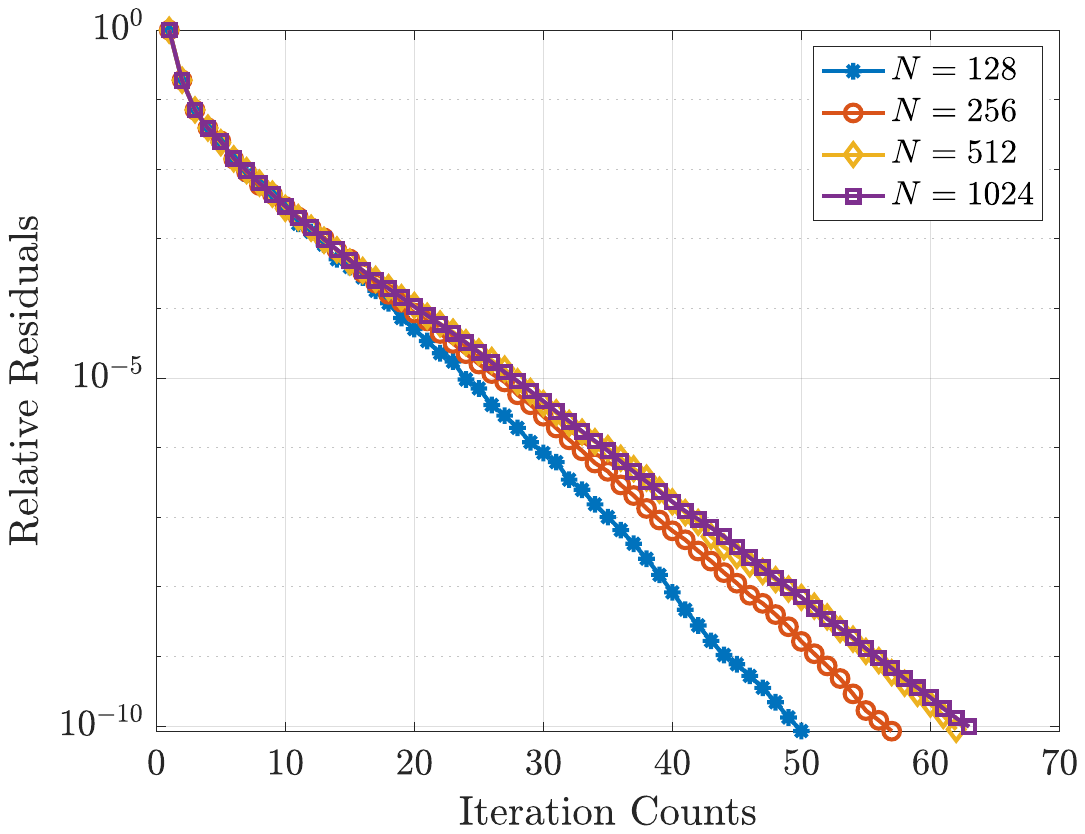}}
    \caption{Errors for the deformed circle ($N+1=1024$) and relative residuals of the iterative solver.}
    \label{fig:deformed}
\end{figure}

\Cref{fig:deformed} presents the bulk and surface errors on the $1024\times1024$ mesh and the relative residuals of the \texttt{pcg} iterative solve.

\subsection{Three-dimensional torus}\label{sec:numerics-torus-3d}

We finally test the three-dimensional FD7/extrapolation implementation on
the solid torus
\begin{equation}
 \Omega=\left\{(x,y,z)\in\mathbb R^3:
 \bigl(\sqrt{x^2+y^2}-R\bigr)^2+z^2<r^2\right\},
 \qquad \Gamma=\partial\Omega,
 \label{eq:torus-3d}
\end{equation}
centered at the origin, with major radius $R=0.8$ and minor radius
$r=0.3$. This experiment is a computational demonstration of the
three-dimensional construction; the error and conditioning analysis in
the preceding sections is two-dimensional and is not asserted here as a
three-dimensional theorem.

The manufactured solution is
\begin{equation}
 u(x,y,z)=\sin(\pi x)\cos(2\pi y)\sin(3\pi z).
 \label{eq:torus-trig2}
\end{equation}
We take $k=1$ and
$\sigma_{\mathrm{bulk}}=\sigma_{\mathrm{surface}}=0$. Thus, in the
inhomogeneous-bulk extension used by the code, $k\Delta u-
\sigma_{\mathrm{bulk}}u=f$, the bulk forcing is
$f=\Delta u=-14\pi^2u$, while the surface forcing
$g=-\Delta_\Gamma u$ is evaluated analytically from the ambient gradient
and Hessian of \eqref{eq:torus-trig2} and the exact torus normal and
curvature. The compatible surface load is projected to zero discrete
mean, and the constant mode is fixed by the same scale-matched rank-one
gauge as in \Cref{rem:gauge-alpha}, using the exact manufactured mean.

The code starts from $[-1,1]^3$ with padding $l=0.25$. For this torus the
geometry bound is $R+r=1.1$, so the padding rule gives
$l=\max\{0.25,R+r-1+0.25\}=0.35$. Thus all three runs use
\begin{equation}
 [-1.35,1.35]^3, \qquad N=2^\ell-1, \qquad
 h=\frac{2.7}{N+1}=\frac{2.7}{2^\ell},
 \label{eq:torus-box}
\end{equation}
where the input levels $\ell=5,6,7$ correspond to
$N=31,63,127$ interior points in each coordinate direction. The bulk
operator is the seven-point finite-difference Laplacian. The surface
equation is assembled with trilinear $\mathcal Q_1$ traces on the
polyhedral cut surface; interior trace values are supplied by the
three-dimensional lattice single-layer potential and the remaining
exterior cut-cell corners by second-order extrapolation.

All runs use the density map $F$ with the single-layer potential. No
additional diagonal or algebraic preconditioner is applied: the
single-layer map itself supplies the operator preconditioning. The
reduced matrix $K_F=F^TKF$ is assembled and stored, and the linear system
is solved by \texttt{pcg} with relative tolerance $10^{-10}$ and a
maximum of the reduced-system dimension. The reported condition number
is $\operatorname{cond}_2(K_F)$, where $K_F$ includes the rank-one gauge.

The results in \Cref{tab:torus-3d-errors} show approximately second-order
bulk and surface $L^2$ convergence and first-order surface $H^1$ convergence.
The trace-block condition numbers are $88.4$, $174.9$, and $353.0$, whereas
the gauge-fixed reduced condition numbers are $113.6$, $303.7$, and $249.2$;
the corresponding \texttt{pcg} counts are $79$, $137$, and $121$. Thus the
reduced conditioning and iteration counts remain of order $10^2$ over the
three levels, although neither is monotone. This provides computational
evidence, rather than a three-dimensional theorem, for the
operator-preconditioning effect of the single-layer density coordinates on a
curved surface of nontrivial topology.

\begin{table}[htbp]
\centering
\caption{Three-dimensional torus: condition numbers, errors, and observed
rates. The reduced condition number includes the rank-one mean gauge.}
\label{tab:torus-3d-errors}
\begin{tabular}{rcccccccc}
\toprule
$N$ & Iter. & Cond. &
$\|e_b\|_{L^2(\Omega)}$ & Rate &
$\|e_s\|_{L^2(\Gamma^h)}$ & Rate &
$\|e_s\|_{H^1(\Gamma^h)}$ & Rate \\
\midrule
$31$ & 79 & 113.59 &
$2.838{\times}10^{-2}$ & -- & $4.045{\times}10^{-2}$ & -- & $1.755$ & -- \\
$63$ & 137 & 303.67 &
$6.416{\times}10^{-3}$ & $2.15$ & $9.994{\times}10^{-3}$ & $2.02$ & $8.592{\times}10^{-1}$ & $1.03$ \\
$127$ & 121 & 249.23 &
$1.638{\times}10^{-3}$ & $1.97$ & $2.618{\times}10^{-3}$ & $1.93$ & $4.393{\times}10^{-1}$ & $0.97$ \\
\botrule
\end{tabular}
\end{table}


\begin{figure}[htbp]
    \centering
    \subfloat[bulk]{\includegraphics[width=0.32\textwidth]{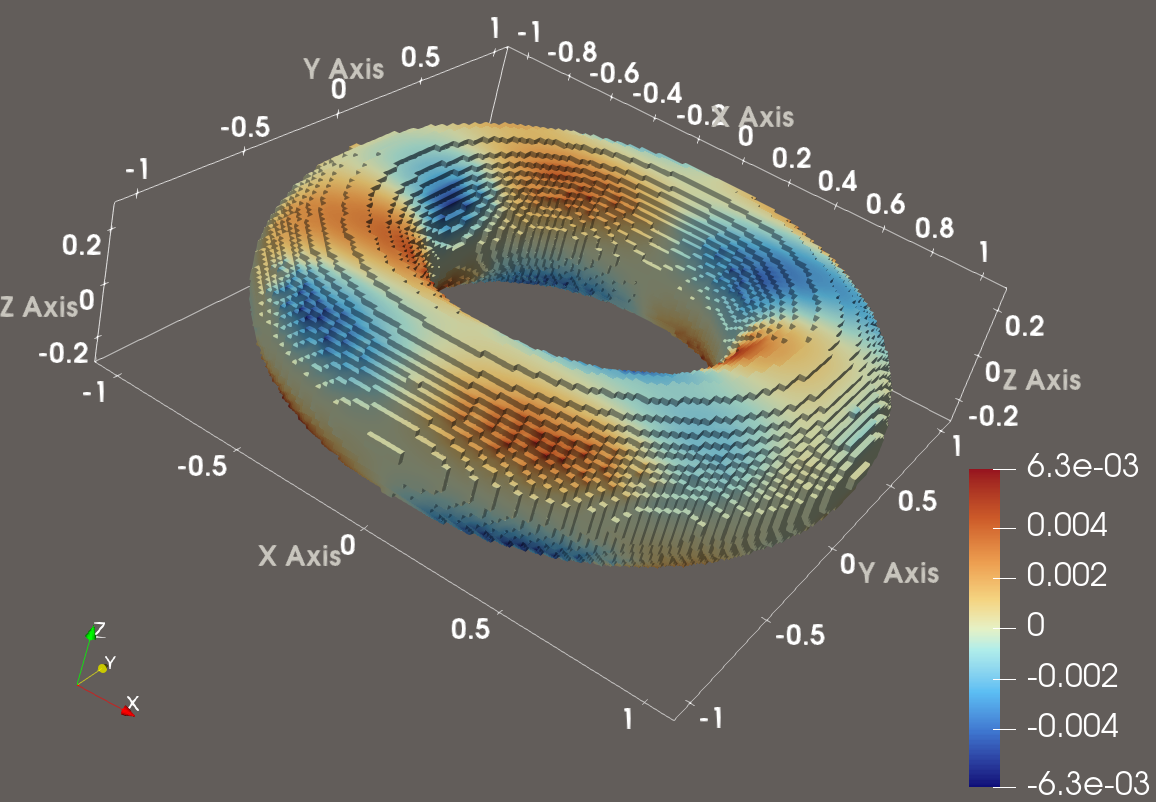}}
    \hfill
    \subfloat[surface]{\includegraphics[width=0.32\textwidth]{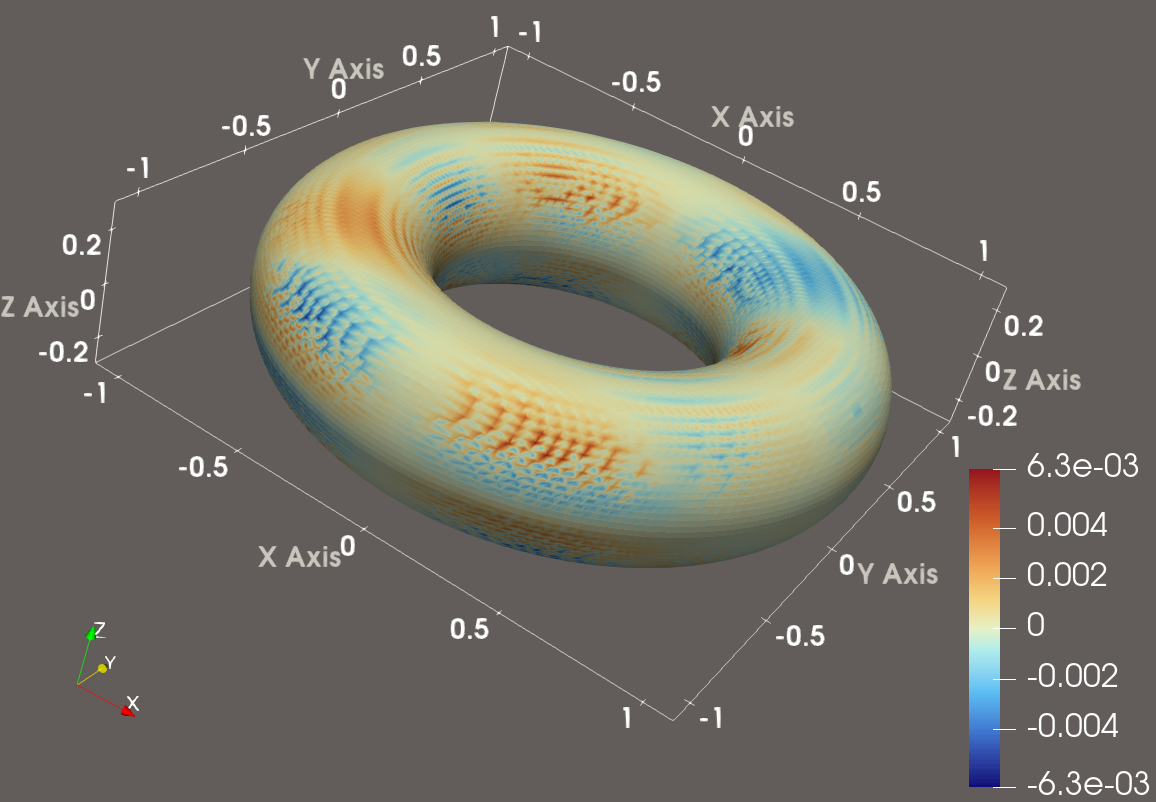}}
    \hfill
    \subfloat[relative residuals]{\includegraphics[width=0.32\textwidth, trim=2cm 7cm 2cm 7cm]{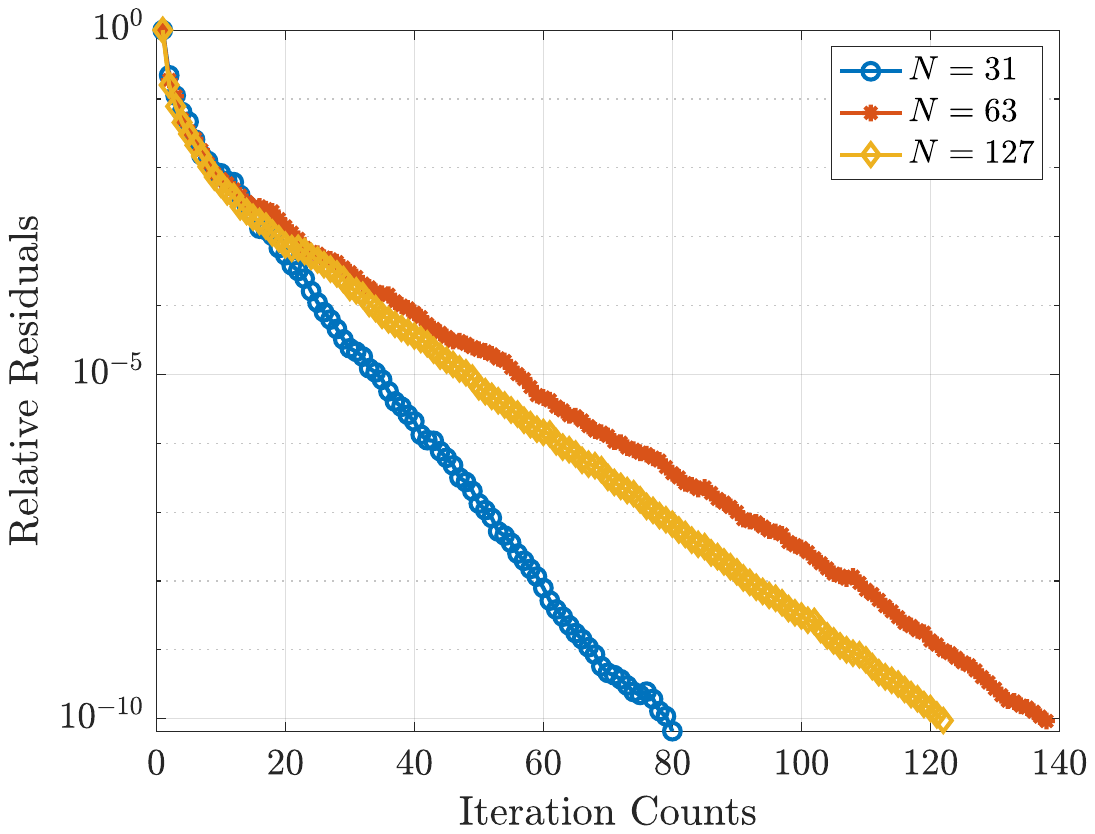}}
    \caption{Errors for the three-dimensional torus on the $N=127$ grid and relative residuals of the iterative solver.}
    \label{fig:torus}
\end{figure}

\Cref{fig:torus} presents the bulk and surface errors on the $127^3$ interior grid and the relative residuals of the \texttt{pcg} solves for all three levels.

\section{Conclusion}\label{sec:conclusion}

We introduced a penalty-free CutFEM formulation in which a lattice Green's function extension constrains the active finite element space and a symmetric Galerkin restriction produces the reduced system. The central idea is to obtain cut stabilization by selecting a discrete-harmonic subspace of the standard active finite element space, rather than by adding a stabilization term to the surface bilinear form. The direct and density parametrizations represent the same reduced space, while the single-layer density coordinates also precondition the second-order surface operator. This connection among discrete harmonic extension, embedded finite element spaces, and operator preconditioning suggests a broader design principle: a stable bulk discrete solver can supply both a cut-robust trial space and effective coordinates for its algebraic solution. The two-dimensional analysis establishes optimal surface approximation under the stated assumptions and cut-robust algebraic estimates for the model problem. The numerical results support the curved-interface hypotheses on circular, deformed, and smooth nonconvex geometries. They also demonstrate the method in three dimensions on a torus, where the measured bulk and surface errors attain the expected rates and the reduced condition numbers and iteration counts remain of order $10^2$ over the three tested refinements.

The present two- and three-dimensional implementations are not matrix-free. Both assemble the dense layer blocks, the reduction map $F$, and the reduced matrix $F^{T}KF$ before passing that matrix to \texttt{pcg}. The reported timings and iteration counts therefore demonstrate the formulation and its conditioning, not near-linear computational complexity. If $M=|\gamma_2|$, storing a dense layer block and applying it directly require $O(M^2)$ memory and work, respectively; here $M=O(h^{-1})$ in two dimensions and $M=O(h^{-2})$ in three dimensions. Translation invariance makes accelerated matrix-free layer application possible with FMM or related machinery~\citep{liska2014parallel,liska2016fast}, but neither such an implementation nor its timing crossover against sparse stabilization is demonstrated here.

The present framework requires a Cartesian background grid and a bulk operator with a tractable LGF. The analysis assumes a smooth closed interface and does not cover interfaces with corners. The curved-interface trace estimate \Cref{ass:curved-trace} and the reduced-space approximation property \Cref{ass:reduced-approx} also remain conditional. The numerical diagnostics support these assumptions for the tested cuts and smooth geometries, but do not replace their proofs or tests over broader geometry classes.

Several other directions are worth pursuing in future work. Replacing the five-point LGF with a nine-point or $\mathcal Q_1$ bulk-element LGF would absorb $\gamma_3$ into $\gamma_2$ and remove the extrapolation $J$. A rigorous extension of the density-formulation conditioning to general curved interfaces would require a discrete pseudodifferential calculus for layer operators restricted to a curved active vertex set. As discussed in \Cref{sec:extensions}, screened Poisson and suitable anisotropic scalar stencils are the most direct further bulk operators; biharmonic and Stokes extensions require separate layer-block and reconstruction-stability results despite the availability of their LGFs. Coupled bulk--surface dynamics and moving interfaces are further natural targets.

\section*{Acknowledgements}
Q.\ Xia would like to thank Sara Zahedi and Xianmin Xu for kind and helpful discussions.
During the preparation of this manuscript, the author used OpenAI Codex for language editing and assistance with \LaTeX{} revision. The author reviewed the resulting text and takes full responsibility for the content.

\section*{Funding}
This work was partially supported by the National Natural Science Foundation of China (Grant No.\ 12401546). Q.\ Xia additionally acknowledges funding from Wenzhou-Kean University (Grant Nos.\ ISRG2024003 and KY20250604000452).

\section*{Conflict of interest}
The author declares no conflict of interest.

\section*{Data availability}
No new data were generated or analysed in support of this research.

\bibliographystyle{plainnat}
\bibliography{references}

\end{document}